\crefname{hypothesis}{Hypothesis}{Hypotheses}
\title{A quasi-Grassmannian gradient flow model for eigenvalue problems\thanks{Submitted to the editors DATE.
\funding{This work was supported by the National Natural Science Foundation of China under
	grant 12571446 and the National Key R \& D Program of China under grants 2025YFA1016600 and 2025YFA1016601.}}}
\author{Shengyue Wang\thanks{SKLMS, State Key Laboratory of Mathematical Sciences, Academy of Mathematics and Systems Science, Chinese Academy of Sciences, Beijing 100190, China; and School of Mathematical Sciences, University of Chinese Academy of Sciences, Beijing 100049, China (\email{wangshengyue@amss.ac.cn}, \email{azhou@lsec.cc.ac.cn}).} 
\and Aihui Zhou\footnotemark[2]}
\begin{document}
	
\maketitle

	\begin{abstract}
	We propose a quasi-Grassmannian gradient flow model for eigenvalue problems of linear operators, aiming to efficiently address many eigenpairs. Our model inherently ensures asymptotic orthogonality: without the need for initial orthogonality, the solution naturally evolves toward being orthogonal over time. We establish the well-posedness of the model, and provide the analytic representation of solutions. Through asymptotic analysis, we show that the gradient converges exponentially to zero and that the energy converges exponentially to its minimum. This implies that the solution of the quasi-Grassmannian gradient flow model converges to the solution of the eigenvalue problems as time progresses. These results provide a continuous-flow framework in which the Stiefel constraint is recovered asymptotically rather than imposed on the initial data.
\end{abstract}
\begin{keywords}
gradient flow, eigenvalue problem, quasi-Grassmannian, asymptotic orthogonality, exponential convergence
\end{keywords}

\begin{MSCcodes}
47A75, 47J35, 37L05
\end{MSCcodes}

\section{Introduction}
Eigenvalue problems are fundamental in science and engineering, with applications in fields such as photonic crystals \cite{dorfler2011photonic}, uncertainty quantification \cite{smith2024uncertainty}, and quantum theory \cite{dirac1929quantum}. For instance, the computation of an approximate Karhunen-Lo{\`e}ve (KL) expansion requires solving very large eigenvalue problems to obtain many KL eigenpairs \cite{breuer1991use,schwab2006karhunen}. Similarly, electronic structures are often modeled by Hartree-Fock equations or Kohn-Sham equations, which also involve many eigenpairs and can be reduced to repeatedly solving large-scale linearized eigenvalue problems through discretization and self-consistent field iterations \cite{cances2023density,chen2014adaptive,chen2013numerical,lebris2005computational,martin2020electronic,saad2010numerical}.

The computational cost of solving large-scale eigenvalue problems is significant \cite{golub2013matrix,saad1992numerical,sholl2009density}, particularly when a substantial number of eigenpairs are required. A major bottleneck arises from orthogonalization procedures in solving algebraic eigenvalue problems, whose computational complexity grows dramatically with the problem size. 
We understand that an alternative approach, particularly for approximations in Kohn-Sham density functional theory (DFT), is solving the energy minimization problem \cite{dai2017conjugate,Dai2021,payne1992iterative,schneider2009direct,Zhang2014,Zhao2015}. Notably, even in these approaches, orthogonalization operations are generally unavoidable in computation due to the imposition of orthogonality constraints.

{To address this computational challenge, Dai et al.\ proposed an extended gradient-flow model for Kohn--Sham DFT that preserves orthogonality throughout the evolution and thereby avoids explicit orthogonalization in computation \cite{dai2020,dai2021convergent}. Building on this framework, Chu et al.~\cite{chu2025orthogonality} developed an accelerated method for eigenvalue problems while retaining this structural property.}
These orthogonality-preserving approaches are built on orthogonal initial data and on an evolution constrained by the Stiefel structure. 
This observation motivates the present work: we seek a continuous-flow formulation that allows non-orthogonal initial data while still recovering the orthogonality structure asymptotically.

In this paper, we propose a continuous quasi-Grassmannian gradient flow model for computing many eigenpairs of a self-adjoint operator in an infinite-dimensional setting. By extending the Grassmann gradient from the constraint manifold to the ambient space, the model allows non-orthogonal initial data and drives the solution asymptotically toward the Stiefel constraint. Thus, exact initial orthogonality is not required in the continuous-flow framework.
	
	The main contributions of this work are summarized as follows.
	\begin{itemize}
		\item We formulate a quasi-Grassmannian gradient flow model and show that the model asymptotically recovers the Stiefel constraint. In addition, Theorem~4.3 proves the asymptotic orthogonality of the solution.
		
		\item We establish the well-posedness of the model in the infinite-dimensional operator setting. Theorem~3.6 proves the existence and uniqueness of weak solutions. In particular, we derive an analytic representation of the solution in Theorem~3.10.
		
		\item We analyze the long-time behavior of the flow. Theorem~4.6 proves the exponential decay of the Grassmann gradient. Under the additional admissibility assumptions in Theorem~4.9, the associated energy converges to its minimum and the equivalence class $[U(t)]$ converges to the eigenspace associated with the first $N$ eigenvalues.
	\end{itemize}

While this paper focuses on a quasi-Grassmannian gradient flow model for operator eigenvalue problems, related gradient-flow ideas also arise in spectral analysis and other application areas. In an infinite-dimensional spectral setting, Mazzoleni and Savare \cite{MazzoleniSavare2023} investigated $L^2$-gradient flows of spectral functionals depending on the eigenvalues of Schr\"odinger-type operators. Gradient-flow methods have been developed for computing ground state solutions of Bose--Einstein condensates (BEC) \cite{Bao2004,chen2024convergence,chen2024fully,henning2020sobolev}. Specially, gradient-flow viewpoints are central in machine learning and neural networks \cite{ambrosio2008gradient,bottou2018optimization}, where they provide a fundamental framework for understanding optimization dynamics. A classical matrix example is Oja's work \cite{oja1982simplified,oja1989neural} on principal component extraction by unsupervised neural networks; the associated learning rules are often formulated as nonlinear differential or difference equations of matrices \cite{oja1985stochastic}.

	The corresponding Oja flow arising from the matrix setting has been widely studied, with convergence and stability results established in \cite{chen1998global,yan1994global}. In fact, the analysis of Oja's flow can be incorporated into our general framework. The present model accommodates such matrix-based flows under weaker initial orthogonality requirements and extends the flow formulation from matrices to infinite-dimensional operators. We also mention the spectral sensitivity analysis of Hiriart-Urruty and Ye \cite{HiriartUrrutyYe1995}, which concerns all eigenvalues of a symmetric matrix and is related to the present work through spectral variational analysis in the matrix setting rather than through gradient-flow dynamics. In contrast to spectral flows driven by potentials or other coefficients, our model evolves eigenfunction approximations directly in the ambient space and proves that the Stiefel constraint is recovered asymptotically from non-orthogonal initial data.

The remainder of the paper is organized as follows. Section 2 introduces the fundamental notation and formulates the operator eigenvalue problem. Section 3 presents the quasi-Grassmannian gradient flow model, analyzing its well-posedness, and deriving the solutions' analytic representation. Section 4 focuses on the asymptotic analysis, including asymptotic orthogonality, convergence to equilibrium, and convergence to the minimizer. Section 5 concludes the paper and outlines directions for our ongoing and future work. Finally, the Appendix provides the detailed proofs for the proposition, corollaries, and lemmas used in the paper.
\section{Preliminaries}
In this section, we introduce the basic notation and preliminary lemmas, and discuss the eigenvalue problem of infinite-dimensional linear operators.
\subsection{Notation and preliminary lemma}
\subsubsection{Notation} 
We denote by $W^{s, p}(\Omega)$ $(p\geqslant 1, s\geqslant 0)$ the standard Sobolev spaces with the induced norm $\|\cdot\|_{s, p}$, where $\Omega \subset \mathbb{R}^d$ $(d\in \mathbb{N}_+)$ is a bounded domain with a regular boundary (see, e.g. \cite{evans2022partial,zeidler2013nonlinear}). For $p=2$, we denote by $H^s(\Omega)=W^{s, 2}(\Omega)$ with the norm $\|\cdot\|_{s}=\|\cdot\|_{s, 2}$ and 
$$H_0^1(\Omega)=\left\{u\in H^1(\Omega):\left.u\right|_{\partial \Omega}=0\right\},$$ where $\left.u\right|_{\partial \Omega}=0$ is understood in the sense of trace. 

Consider the Hilbert spaces
\begin{equation*}
	\begin{aligned}
			\big(L^2(\Omega)\big)^N = \left\{\left(u_1, u_2, \cdots, u_N\right): u_i\in L^2(\Omega), i = 1, 2, \cdots, N\right\},
	\end{aligned}
\end{equation*}
and
\begin{equation*}
	\begin{aligned}
			\big(H_0^1(\Omega)\big)^N = \left\{\left(u_1, u_2, \cdots, u_N\right): u_i\in H_0^1(\Omega), i = 1, 2, \cdots, N\right\}.
	\end{aligned}
\end{equation*}
Let 
\begin{equation*}
	\begin{aligned}
		U=\left(u_1, u_2, \cdots, u_N\right) \text{ and }V=\left(v_1, v_2, \cdots, v_N\right).
	\end{aligned}
\end{equation*}
We introduce the $L^2$ inner product matrix as follows
\begin{equation*}
	U^{\top} V=\Big(\left(u_i, v_j\right)\Big)_{i, j=1}^N \in \mathbb{R}^{N \times N},\quad \forall U, V  \in 	\big(L^2(\Omega)\big)^N, 
\end{equation*}
where $(u_i, v_j)= \int_{\Omega} u_i(x) v_j(x) \mathrm{d}x$. The inner product $(\cdot, \cdot)$ and its associated norm are then defined as
\begin{equation*}
	\begin{aligned}
		(U, V)=\operatorname{tr}\left( U^{\top} V\right), \quad \| U \|=\sqrt{(U, U)},\quad \forall U, V  \in 	\big(L^2(\Omega)\big)^N.
	\end{aligned}
\end{equation*}
Similarly, the inner product $(\cdot, \cdot)_1$ on $\big(H_0^1(\Omega)\big)^N$ and the corresponding norm are given by
\begin{equation*}
	\begin{aligned}
		(U, V)_1=\operatorname{tr}\left( U^{\top} V +\left(\nabla U \right)^{\top} \nabla V  \right), \quad\| U \|_1=\sqrt{(U, U)_1}, \quad \forall U, V  \in\big(H_0^1(\Omega)\big)^N.
	\end{aligned}
\end{equation*}
Here $\nabla U$ is obtained by taking the gradient of each component of $U$, that is, $$\nabla U = \left(\nabla u_1, \nabla u_2, \cdots, \nabla u_N\right).$$

For $\mathcal{F}=\left(\mathcal{F}_1, \mathcal{F}_2, \cdots, \mathcal{F}_N\right) \in\left(\big(H_0^1(\Omega)\big)^N\right)^{\prime}=\big(H^{-1}(\Omega)\big)^N$, we set
$$
\mathcal{F}^\top U=\Big( \left\langle\mathcal{F}_i, u_j\right\rangle\Big)_{i, j=1}^N \in \mathbb{R}^{N \times N}, \quad  \forall U \in \left(H_0^1\left(\Omega\right)\right)^N,
$$
where $\langle\cdot, \cdot\rangle$ is the duality pairing of $H^{-1}(\Omega)$ and $H_0^1(\Omega)$. We then define
$$
\langle\mathcal{F}, U\rangle  = \operatorname{tr}\left(\mathcal{F}^\top U \right),\quad  \forall U \in \left(H_0^1\left(\Omega\right)\right)^N,
$$
and the norm
\begin{equation*}
	\begin{aligned}
		\|\mathcal{F}\|_{-1}= \sup\limits_{U \in \big(H_0^1(\Omega)\big)^N \backslash \{0\}} \frac{\left|\langle\mathcal{F}, U\rangle \right|}{\| U \|_1}.
	\end{aligned}
\end{equation*}

We introduce the Stiefel manifold as follows
$$	\mathcal{M}^N=\left\{U \in \big(H_0^1(\Omega)\big)^N: U^\top U=I_N\right\},$$
and it is clear to see that
$$U \in \mathcal{M}^N \iff UQ \in \mathcal{M}^N,\quad \forall Q \in \mathcal{O}^N,$$	
where $\mathcal{O}^N$ denotes the set of orthogonal matrices of order $N$. 

Let $\mathcal{G}^N$ be the Grassmann manifold (also known as the Grassmannian), which is a quotient manifold of the Stiefel manifold, that is
$$\mathcal{G}^N = \mathcal{M}^N/ \sim, $$
where $\sim$ denotes the equivalence relation defined by $\hat{U} \sim U$ if and only if there exists $Q \in \mathcal{O}^N$ such that $\hat{U}=U Q$. For any $U \in \big(L^2(\Omega)\big)^N$, we denote the equivalence class as
$$
[U]=\left\{U Q: Q \in \mathcal{O}^N\right\},
$$
then $\mathcal{G}^N$ can be formulated as
$$
\mathcal{G}^N=\left\{[U]: U \in \mathcal{M}^N\right\}.
$$

We define the $L^2$ and $H_0^1$ distances between equivalence classes as
\begin{equation*}
	\begin{aligned}
		\operatorname{dist}\left([U_1],[U_2]\right)&= \inf\limits_{Q\in \mathcal{O}^N} \left\|U_1Q-U_2\right\|, \quad \forall U_1, U_2 \in \big(L^2(\Omega)\big)^N,
		\\\operatorname{dist}_1\left([U_1],[U_2]\right)&= \inf\limits_{Q\in \mathcal{O}^N} \left\|U_1Q-U_2\right\|_1,\quad \forall U_1, U_2 \in \big(H_0^1(\Omega)\big)^N.
	\end{aligned}
\end{equation*}
Given $\delta>0$, for $ U \in \left(H_0^1\left(\Omega\right)\right)^N$ we define a closed $\delta$-neighborhood by
\begin{equation*}
	\begin{aligned}
		B(U,\delta) = \left\{\hat{U}\in \big(H_0^1(\Omega)\big)^N:\|U-\hat{U}\|_1\leqslant \delta\right\},
	\end{aligned}
\end{equation*}
and for $V^{(*)} \in \big(H_0^1(\Omega)\big)^N\bigcap\mathcal{M}^N$, we introduce a closed $\delta$-neighborhood of $[V^{(*)}]$ by
\begin{equation*}
	\begin{aligned}
		B\left([V^{(*)}],\delta\right) = \left\{[\hat{U}]\subset \big(H_0^1(\Omega)\big)^N: \operatorname{dist}_1\left( [\hat{U}],[V^{(*)}]\right)\leqslant \delta\right\}.
	\end{aligned}
\end{equation*}

Let $Y$ denote a real Banach space with norm $\|\cdot\|_Y$. For time-dependent functions, we use the Bochner space
\begin{equation*}
	\begin{aligned}
		L^{p}\left(0, T ; Y\right)&=\left\{U:[0,T)\rightarrow Y\mid 
		\|U\|_{L^{p}\left(0, T ; Y\right)}<\infty	\right\},
	\end{aligned}
\end{equation*}
where $T\in (0,+\infty]$ and the norm $\|U\|_{L^{p}\left(0, T ; Y\right)}$ is given by
\begin{equation*}
	\|U\|_{L^{p}\left(0, T ; Y\right)}=\left\{
	\begin{aligned}
		&	\left(\int_{0}^{T} \|U(\cdot, t)\|_Y^p \mathrm{d}t \right)^{\frac{1}{p}}    &\quad 1\leqslant p<\infty,
		\\&\text{ess}\sup\limits_{0 \leqslant t < T} \|U(\cdot, t)\|_Y & p=\infty.
	\end{aligned} \right. 
\end{equation*}

We denote by
$$
\operatorname{span}\{U\}
=
\left\{
\sum_{i=1}^N a_i u_i:\ a_i\in\mathbb R,\ i=1,\ldots,N
\right\}
$$
the subspace generated by the components of \(U\). 

For symmetric matrices $A,B\in \mathbb{R}^{N\times N}$, we use the positive semidefinite order
$$
A\leqslant B
\quad \Longleftrightarrow \quad
a^{\top}Aa\leqslant a^{\top}Ba,\quad \forall a\in \mathbb{R}^N.
$$
We write $\lambda_{\max}(A)$ and $\lambda_{\min}(A)$ for the largest and smallest eigenvalues of $A$, respectively.

\subsubsection{Preliminary lemma}
\label{Preliminary lemma} 
Let
$$
\mathcal{H}: H_0^1(\Omega) \rightarrow H^{-1}(\Omega)
$$
be the linear operator. We impose the following assumptions:
\begin{itemize}
	\item $\left\langle\mathcal{H}u,v\right\rangle = \left\langle\mathcal{H}v,u\right\rangle$ for all $u, v \in H_0^1(\Omega)$. 
	\item The bilinear form $(u,v) \mapsto \left\langle\mathcal{H}u,v\right\rangle$ is bounded on $H_0^1(\Omega)\times H_0^1(\Omega)$. 
	\item The bilinear form satisfies G{\aa}rding's inequality: there exist constants $c_1>0$ and $c_2\geqslant 0$ such that
	\begin{equation*}
		\begin{aligned}
			\langle\mathcal{H}u,u\rangle
			\geqslant c_1\|\nabla u\|^2-c_2\|u\|^2,
			\quad \forall u\in H_0^1(\Omega).
		\end{aligned}
	\end{equation*}
\end{itemize}

These assumptions hold for many classical linear operators on suitable domains, including second-order elliptic operators. By Section 6.5 of \cite{evans2022partial}, we have the following lemma.
\begin{lemma}\label{lem:eigenfunction basis}
	There exists a nondecreasing sequence of real eigenvalues $\left\{\lambda_i\right\}_{i=1}^{\infty}$ of $\mathcal{H}$ and corresponding eigenfunctions $\left\{v_i\right\}_{i=1}^{\infty} \subset H_0^1(\Omega)$ such that 	$$
	\left\langle \mathcal{H}v_i ,v\right\rangle=\left(\lambda_iv_i ,v\right), \quad \forall v\in H_0^1(\Omega)
	$$ 
	holds for all $ i\in \mathbb{N}_+$. Moreover, the set $\left\{v_i\right\}_{i=1}^{\infty}$ forms an orthonormal basis of $L^2(\Omega)$, i.e., $$\left(v_i, v_j\right)=\delta_{i j},$$ and every $u\in H_0^1(\Omega)$ can be represented as
	\begin{equation*}
		\begin{aligned}
			u = \sum_{i=1}^{\infty}\left(u, v_i\right) v_i.
		\end{aligned}
	\end{equation*}
\end{lemma}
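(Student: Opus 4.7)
The plan is to reduce the statement to the classical spectral theorem for compact self\nobreakdash-adjoint operators on a Hilbert space, following the argument in Section~6.5 of \cite{evans2022partial} that the excerpt cites. First I would absorb the Gårding constant by introducing the shifted bilinear form
\begin{equation*}
B[u,v] := \langle \mathcal{H}u, v\rangle + c_2 (u,v), \qquad u,v \in H_0^1(\Omega).
\end{equation*}
By the symmetry and boundedness assumptions on $\mathcal{H}$, $B[\cdot,\cdot]$ is symmetric and continuous on $H_0^1(\Omega)\times H_0^1(\Omega)$, and by the Gårding inequality it satisfies $B[u,u] \geqslant c_1\|\nabla u\|^2$, so it is coercive on $H_0^1(\Omega)$ (using Poincaré if needed to control $\|u\|_1$ by $\|\nabla u\|$).

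Next, for each $f\in L^2(\Omega)$, the Lax--Milgram theorem yields a unique $u_f \in H_0^1(\Omega)$ with $B[u_f,v] = (f,v)$ for every $v\in H_0^1(\Omega)$. Set $Sf := u_f$. I would verify three properties of $S\colon L^2(\Omega)\to L^2(\Omega)$: \emph{boundedness} (from the coercivity estimate $\|u_f\|_1 \lesssim \|f\|$ and the continuous injection $H_0^1 \hookrightarrow L^2$), \emph{self-adjointness} (from the symmetry of $B$, since $(Sf,g) = B[Sg, Sf] = B[Sf, Sg] = (f, Sg)$), and \emph{compactness} (by factoring $L^2 \to H_0^1 \hookrightarrow L^2$ and invoking the Rellich--Kondrachov compact embedding on the bounded domain $\Omega$).

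At this point the Hilbert--Schmidt spectral theorem delivers a countable orthonormal basis $\{v_i\}_{i=1}^\infty$ of $L^2(\Omega)$ and real eigenvalues $\mu_i \to 0$ with $Sv_i = \mu_i v_i$. Since $B$ is coercive, $\mu_i>0$ for all $i$, and unwinding the definition of $S$ gives
\begin{equation*}
\langle \mathcal{H}v_i, v\rangle = (\lambda_i v_i, v), \qquad \lambda_i := \mu_i^{-1} - c_2,
\end{equation*}
for all $v \in H_0^1(\Omega)$. Reordering so that $\mu_i$ is decreasing yields $\lambda_i$ increasing, as required. In particular $v_i \in H_0^1(\Omega)$ because $v_i = \mu_i^{-1} S v_i \in \operatorname{Range}(S) \subset H_0^1(\Omega)$, so the $v_i$ belong to the correct space.

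The step I expect to require the most care is upgrading completeness from $L^2(\Omega)$ to $H_0^1(\Omega)$, i.e., showing that every $u\in H_0^1(\Omega)$ admits the expansion $u = \sum_i (u,v_i) v_i$ with convergence in $H_0^1$. My approach would be to work with the inner product $B[\cdot,\cdot]$ (which is equivalent to $(\cdot,\cdot)_1$) and observe that the rescaled functions $\{v_i/\sqrt{\mu_i^{-1}}\}$ are $B$-orthonormal; then, for $u\in H_0^1(\Omega)$, partial sums of the $L^2$-Fourier series $\sum_i (u,v_i) v_i$ coincide with $B$-orthogonal projections onto $\operatorname{span}\{v_1,\dots,v_n\}$, whose $B$-norms are bounded by $B[u,u]$. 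Using coercivity, Bessel's inequality in the $B$-inner product, and $L^2$-density of $\operatorname{span}\{v_i\}$ then upgrades convergence to $H_0^1$. The remaining conclusions ($(v_i,v_j)=\delta_{ij}$ and reality of $\lambda_i$) are immediate consequences of the spectral theorem applied to $S$.
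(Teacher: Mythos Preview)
Your proposal is correct and is precisely the standard argument from Section~6.5 of \cite{evans2022partial} that the paper invokes; the paper itself offers no proof beyond that citation, so your reconstruction of the shifted-bilinear-form / Lax--Milgram / compact-self-adjoint-spectral-theorem route is exactly what is intended.
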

For $N\in \mathbb{N}_+$, we extend $\mathcal{H}$ componentwise to $\big(H_0^1(\Omega)\big)^N$ and use the same notation. Thus, for any $U=\left(u_1,u_2,\cdots,u_N\right)\in\left(H_0^1(\Omega)\right)^N$,
\begin{equation*}
	\begin{aligned}
		\mathcal{H}U =\left(\mathcal{H}u_1,\mathcal{H} u_2, \cdots, \mathcal{H}u_N\right). 
	\end{aligned}
\end{equation*}
\begin{lemma}
	Define the vector 
	$$
	V_{i p}=(0,0, \ldots, 0, \underbrace{v_i}_{p \text {-th position}}, 0, \ldots, 0) \in\left(H_0^1(\Omega)\right)^N 
	$$
	for each $p \in\{1,2, \ldots, N\}$ and $i \geqslant 1$.
	Then, for all $i\in \mathbb{N}_+$ and $1\leqslant p\leqslant N$,
	\begin{equation*}
		\begin{aligned}
			\left\langle\mathcal{H}V_{ip},V\right\rangle
			=\left(\lambda_i V_{ip},V\right),
			\quad \forall V\in \big(H_0^1(\Omega)\big)^N.
		\end{aligned}
	\end{equation*}
	 Moreover, $\left\{V_{i p}\right\}_{i \geqslant 1,1 \leqslant p \leqslant N}$ forms an orthonormal basis of $\left(L^2(\Omega)\right)^N$, that is,
	$$
	\left(V_{i p}, V_{j q}\right)=\delta_{i j} \delta_{p q},
	$$ and it is also a basis of $\big(H_0^1(\Omega)\big)^N$.
	Thus, every $U=\left(u_1, u_2, \cdots, u_N\right) \in\left(H_0^1(\Omega)\right)^N$ can be represented as
	$$
	U=\sum_{i=1}^{\infty} \sum_{p=1}^N\left(U, V_{i p}\right) V_{i p}.
	$$ 
	It is clear that $\left(U, V_{i p}\right)=\left(u_p, v_i\right)$ for $p=1, \ldots, N$.
\end{lemma}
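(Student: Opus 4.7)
The plan is to reduce every assertion componentwise to the preceding scalar lemma, using the fact that $\mathcal{H}$ acts independently on each entry of $V_{ip}$ and that the inner products on $\bigl(L^2(\Omega)\bigr)^N$ and $\bigl(H_0^1(\Omega)\bigr)^N$ are traces of matrices whose entries are scalar inner products or duality pairings. In other words, nothing new happens beyond carefully unwinding the definitions given in the Notation subsection.

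First I would verify the eigenvalue identity. Fix $V=(w_1,\dots,w_N)\in \bigl(H_0^1(\Omega)\bigr)^N$. By the definition of $\mathcal{H}$ on $N$-tuples, the $k$-th entry of $\mathcal{H}V_{ip}$ equals $\mathcal{H}v_i$ when $k=p$ and $0$ otherwise, so the matrix $(\mathcal{H}V_{ip})^{\top}V$ has only its $p$-th row nonzero with entries $\langle \mathcal{H}v_i,w_l\rangle$. Taking the trace yields
\begin{equation*}
\langle \mathcal{H}V_{ip},V\rangle = \langle \mathcal{H}v_i,w_p\rangle.
\end{equation*}
A parallel computation for $(\lambda_i V_{ip},V)$ gives $\lambda_i(v_i,w_p)$, and the scalar eigenvalue identity from the previous lemma matches these two quantities.

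Next I would handle orthonormality in $\bigl(L^2(\Omega)\bigr)^N$. The entry $(k,l)$ of $V_{ip}^{\top}V_{jq}$ is $(v_i,v_j)=\delta_{ij}$ when $(k,l)=(p,q)$ and $0$ otherwise, so the trace collapses to $\delta_{ij}\delta_{pq}$. The formula $(U,V_{ip})=(u_p,v_i)$ is obtained by the same trace-of-a-single-nonzero-entry argument. Combining this with the scalar expansion $u_p=\sum_{i\geqslant 1}(u_p,v_i)v_i$ componentwise gives
\begin{equation*}
U=\sum_{i=1}^{\infty}\sum_{p=1}^N (u_p,v_i)V_{ip}=\sum_{i=1}^{\infty}\sum_{p=1}^N (U,V_{ip})V_{ip},
\end{equation*}
which settles the expansion claim (hence spanning) and, together with orthonormality, shows that $\{V_{ip}\}$ is an orthonormal basis of $\bigl(L^2(\Omega)\bigr)^N$.

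The only step requiring a little care, and thus the main obstacle if any, is verifying convergence of the expansion in the $\bigl(H_0^1(\Omega)\bigr)^N$ topology rather than merely in $\bigl(L^2(\Omega)\bigr)^N$. Here I would exploit the componentwise structure: the norm $\|\cdot\|_1$ is the Euclidean combination of the scalar $H^1$ norms of the components, so the convergence in $H_0^1(\Omega)$ of each scalar expansion $u_p=\sum_i(u_p,v_i)v_i$ (guaranteed by the previous lemma, which asserts that $\{v_i\}$ is also a basis of $H_0^1(\Omega)$) lifts directly to convergence of the partial sums $\sum_{i\leqslant M,\,p}(U,V_{ip})V_{ip}$ to $U$ in $\bigl(H_0^1(\Omega)\bigr)^N$. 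This completes the proof.
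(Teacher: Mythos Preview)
Your proposal is correct and aligns with the paper's treatment: the paper states this lemma without proof, regarding it as the immediate componentwise extension of the preceding scalar lemma once $\mathcal{H}$ is applied entrywise to $N$-tuples. Your argument is exactly that extension made explicit, so there is nothing to add or correct.
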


For a fixed $m\in \mathbb{N}_+$, denote the finite-dimensional space $\mathcal{V}_m = \operatorname{span}\{v_1, \cdots, v_m\}$ with the basis $\{v_i \}_{i=1}^{m}$, which also means that 
$$\left\{V_{11}, V_{21}, \cdots, V_{m1}, \cdots\cdots, V_{1N}, V_{2N}, \cdots, V_{mN}\right\}$$
is an orthogonal basis of $\left(\mathcal{V}_m\right)^N \subset \big(H_0^1(\Omega)\big)^N$. Extending the proof of Theorem 30.B in \cite{zeidler2013nonlinear} from $H_0^1(\Omega)$ to $\big(H_0^1(\Omega)\big)^N$ gives the following lemma.
\begin{lemma}
	Define the operator $\mathcal{P}_m: \big(H^{-1}(\Omega)\big)^N \rightarrow  \left(\mathcal{V}_m\right)^N$ by
	\begin{equation*}
		\begin{aligned}
			\mathcal{P}_m \mathcal{F} = \sum\limits_{i=1}^m \sum\limits_{p=1}^N  \langle \mathcal{F}, V_{ip}\rangle V_{ip}, \quad \forall \mathcal{F} \in \big(H^{-1}(\Omega)\big)^N.
		\end{aligned}
	\end{equation*}
	Then $\mathcal{P}_m$ is linear and continuous and satisfies
	\begin{equation*}
		\begin{aligned}
			\langle  \mathcal{P}_m \mathcal{F}, U\rangle = \langle \mathcal{F}, \mathcal{P}_m U\rangle \text{ for all } \mathcal{F} \in \big(H^{-1}(\Omega)\big)^N \text{ and }U \in \big(H_0^1(\Omega)\big)^N.
		\end{aligned}
	\end{equation*} Moreover, its restriction to $\big(L^2(\Omega)\big)^N$ can be written as
	\begin{equation*}
		\begin{aligned}
			\mathcal{P}_m U = \sum\limits_{i=1}^m  \sum\limits_{p=1}^N   \left(  U, V_{ip} \right) V_{ip}, \quad \forall U \in \left(L^2\left(\Omega\right)\right)^N.
		\end{aligned}
	\end{equation*}
Thus, $\mathcal{P}_m|_{\big(L^2(\Omega)\big)^N}$ is the orthogonal projection from $\big(L^2(\Omega)\big)^N$ onto $\left(\mathcal{V}_m\right)^N$.
\end{lemma}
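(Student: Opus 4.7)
The plan is to follow the single-component argument of Theorem 30.B in Zeidler, now that the bi-indexed basis $\{V_{ip}\}_{i\geqslant 1,\,1\leqslant p\leqslant N}$ supplied by the previous lemma plays the role of $\{v_i\}$, and to verify each of the four claims separately. Since the sum defining $\mathcal{P}_m$ is finite, no convergence argument is needed.

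Linearity of $\mathcal{P}_m$ is immediate from the linearity of each functional $\mathcal{F}\mapsto \langle\mathcal{F},V_{ip}\rangle$ on $\big(H^{-1}(\Omega)\big)^N$. For continuity I would apply the triangle inequality together with the duality bound $|\langle\mathcal{F},V_{ip}\rangle|\leqslant \|\mathcal{F}\|_{-1}\|V_{ip}\|_1$ to obtain
$$\|\mathcal{P}_m\mathcal{F}\|_1\leqslant \left(\sum_{i=1}^m\sum_{p=1}^N \|V_{ip}\|_1^2\right)\|\mathcal{F}\|_{-1},$$
and the finite sum on the right is bounded because each $v_i\in H_0^1(\Omega)$.

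For the self-adjoint identity I would substitute the definition of $\mathcal{P}_m\mathcal{F}$ on the left-hand side and of $\mathcal{P}_m U$ on the right-hand side; both expressions collapse to the same finite double sum $\sum_{i=1}^m\sum_{p=1}^N \langle\mathcal{F},V_{ip}\rangle\,(U,V_{ip})$, after using that $V_{ip}\in\big(L^2(\Omega)\big)^N$ so the duality pair $\langle V_{ip},U\rangle$ coincides with $(V_{ip},U)$. Specializing to $\mathcal{F}=U\in\big(H_0^1(\Omega)\big)^N$ and invoking the same identification yields the stated restriction formula on $\big(H_0^1(\Omega)\big)^N$.

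The orthogonal-projection claim then reduces to three finite checks: first, $\mathcal{P}_m$ maps into $(\mathcal{V}_m)^N$, which is clear from the formula; second, $\mathcal{P}_m V_{iq}=V_{iq}$ for $i\leqslant m$ thanks to the relation $(V_{iq},V_{jp})=\delta_{ij}\delta_{pq}$, giving idempotence and surjectivity onto $(\mathcal{V}_m)^N$; third, $(\mathcal{P}_m U,V)=(U,\mathcal{P}_m V)$ in the $L^2$ inner product, which follows from the symmetry of the double sum above. I do not expect any serious obstruction; the only care required is to track which of the three pairings---the $H_0^1$ inner product, the $L^2$ inner product, or the $H^{-1}$--$H_0^1$ duality---is in play at each step, so that the finite-sum manipulations are carried out in the correct space.
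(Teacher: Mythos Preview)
Your proposal is correct and follows precisely the approach the paper itself indicates: the paper does not give a proof but simply states that the lemma is obtained by extending the proof of Theorem 30.B in Zeidler from $H_0^1(\Omega)$ to $\big(H_0^1(\Omega)\big)^N$, which is exactly what you outline. The four checks you sketch---linearity, continuity via the finite sum of $\|V_{ip}\|_1^2$, the self-adjoint identity via the Gelfand-triple identification $\langle V_{ip},U\rangle=(V_{ip},U)$, and idempotence from $(V_{ip},V_{jq})=\delta_{ij}\delta_{pq}$---are the right ones, and your cautionary remark about tracking which pairing is in use is the only place any care is needed.
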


Finally, we introduce a lemma, which can be obtained from Lemma 2.2 of \cite{dai2017conjugate} together with a standard analysis.
\begin{lemma}
	Let $U=\left(u_1, u_2, \cdots, u_N\right) \in\big(L^2(\Omega)\big)^N$, $V=\left(v_1, v_2, \cdots, v_N\right) \in\big(L^2(\Omega)\big)^N$ and $A\in \mathbb{R}^{N\times N}$. Then
	\begin{equation*}
		\begin{aligned}
			&	\| U^\top V \| \leqslant \|U\|\|V\|,
			\\&	\| U A \| \leqslant \|U\|\|A\|,
			\\&	\| U A \| \leqslant \sigma_{\max}(U)\|A\|,
		\end{aligned}
	\end{equation*}
	where $\sigma_{\max}(U) = \sqrt{\lambda_{\max}\left( U^\top U\right)}$. It implies that the norms for functions and matrices are compatible.
\end{lemma}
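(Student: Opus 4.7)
The plan is to prove all three inequalities through direct computation with the trace formulation of the inner product, reducing to entrywise Cauchy--Schwarz in $L^2(\Omega)$ and standard spectral properties of positive semi-definite matrices. I will prove (1) first, then (3), and finally deduce (2) as an immediate corollary.

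For the first inequality, I would exploit the entrywise structure $(U^\top V)_{ij} = (u_i, v_j)$. Applying the Cauchy--Schwarz inequality in $L^2(\Omega)$ componentwise gives $|(u_i,v_j)|^2 \leqslant \|u_i\|^2 \|v_j\|^2$, so that
\[
\|U^\top V\|^2 = \sum_{i,j=1}^N (u_i,v_j)^2 \leqslant \Bigl(\sum_{i=1}^N \|u_i\|^2\Bigr)\Bigl(\sum_{j=1}^N \|v_j\|^2\Bigr) = \|U\|^2 \|V\|^2.
\]
Here I use that the norm $\|\cdot\|$ on $\bigl(L^2(\Omega)\bigr)^N$ coincides with $\bigl(\sum_i \|u_i\|^2\bigr)^{1/2}$ (from the trace definition), and the norm on $\mathbb{R}^{N\times N}$ is the Frobenius norm.

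For the third inequality, I would rewrite the squared norm as
\[
\|UA\|^2 = \operatorname{tr}\bigl((UA)^\top (UA)\bigr) = \operatorname{tr}\bigl(A^\top (U^\top U) A\bigr).
\]
Since $M := U^\top U \in \mathbb{R}^{N\times N}$ is symmetric positive semi-definite, I would diagonalize $M = Q\Lambda Q^\top$ with $Q \in \mathcal{O}^N$ and $\Lambda = \operatorname{diag}(\lambda_1,\ldots,\lambda_N)$, $0 \leqslant \lambda_i \leqslant \lambda_{\max}(M) = \sigma_{\max}(U)^2$. Setting $B = Q^\top A$ and using the orthogonal invariance of the Frobenius norm $\|B\| = \|A\|$,
\[
\operatorname{tr}(A^\top M A) = \operatorname{tr}(B^\top \Lambda B) = \sum_{i,k} \lambda_i B_{ik}^2 \leqslant \sigma_{\max}(U)^2 \sum_{i,k} B_{ik}^2 = \sigma_{\max}(U)^2 \|A\|^2,
\]
which yields (3). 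The second inequality then follows at once, because the nonnegativity of the eigenvalues of the PSD matrix $U^\top U$ forces $\sigma_{\max}(U)^2 = \lambda_{\max}(U^\top U) \leqslant \operatorname{tr}(U^\top U) = \|U\|^2$.

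No real obstacle is expected: each step is a standard manipulation. The only minor point requiring care is the consistent interpretation of the symbol $\|\cdot\|$ across function arrays and $N\times N$ matrices, both of which are Frobenius-type norms built from the trace, so that the compatibility invoked in the final claim of the lemma follows transparently from the three estimates above.
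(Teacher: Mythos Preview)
Your proof is correct. The paper itself does not give a detailed argument for this lemma: it simply remarks that the result ``can be obtained from Lemma~2.2 of \cite{dai2017conjugate} together with a standard analysis.'' Your write-up supplies exactly that standard analysis---componentwise Cauchy--Schwarz for the first estimate and the spectral bound $A^{\top}(U^{\top}U)A \leqslant \lambda_{\max}(U^{\top}U)\,A^{\top}A$ (equivalently, your diagonalization) for the third, from which the second follows via $\lambda_{\max}(U^{\top}U)\leqslant \operatorname{tr}(U^{\top}U)$---so your approach is fully consistent with what the paper intends.
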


\subsection{Eigenvalue problem}
We now consider the eigenvalue problem for the first $N$ smallest eigenvalues of the operator $\mathcal{H}$:
\begin{equation*}
	\left\{\begin{aligned}
		&\left\langle \mathcal{H}v_j,v\right\rangle
		=\left(\lambda_j v_j,v\right),
		\quad \forall v\in H_0^1(\Omega),\quad 1\leqslant j\leqslant N,\\
		&(v_i,v_j)=\delta_{ij},\quad 1\leqslant i,j\leqslant N,
	\end{aligned}\right.
\end{equation*}
which is equivalent to
\begin{equation}
	\label{linear eigenvalue problem}
	\left\{ \begin{aligned} 
		&\left\langle \mathcal{H} V^{(*)}, V\right\rangle=\left( V^{(*)}\Lambda,  V\right),   \quad \forall V\in \big(H_0^1(\Omega)\big)^N,
		\\&(V^{(*)})^\top V^{(*)}= I_N,
	\end{aligned}\right.
\end{equation} 
where $\Lambda =\operatorname{diag}(\lambda_1,\lambda_2,\cdots, \lambda_N)$ with $\lambda_1\leqslant \lambda_2\leqslant \cdots\leqslant \lambda_N    \ (<\lambda_{N+1})   $ being the $N$ smallest eigenvalues of $\mathcal{H}$, and the columns of $V^{(*)}=\left(v_1, v_2, \ldots, v_N\right) \in\left(H_0^1\left(\Omega\right)\right)^N$ are the corresponding eigenfunctions.

Since the eigenspaces are unchanged under a scalar shift of the operator, we replace the original operator by a shifted operator so that its first $N$ eigenvalues are negative. For simplicity, the shifted operator is still denoted by $\mathcal H$. Thus, throughout the paper, we may assume that $\mathcal H$ has at least $N$ negative eigenvalues.

Associated with the operator $\mathcal{H}$, we define an energy  functional
	$$ E(W)=\frac{1}{2} \operatorname{tr}\left(W^\top \mathcal{H} W\right) \quad \forall W\in (H^1_0(\Omega))^N,$$
	where the energy functional $E$ is invariant under orthogonal transformations, i.e., 
	$$E(W)=E(W Q),\quad \forall Q \in \mathcal{O}^N,\  W \in \big(H_0^1(\Omega)\big)^N. $$
The eigenvalue problem (\ref{linear eigenvalue problem}) can also be formulated as the minimization problem
\begin{equation}
	\label{minimization problem}
	\begin{aligned}
		\min\limits_{ W\in \mathcal{M}^N} E(W).
	\end{aligned}
\end{equation}
To get rid of the nonuniqueness of the minimizer caused by this invariance, we consider (\ref{minimization problem}) on the Grassmann manifold $\mathcal{G}^N$. Actually, (\ref{minimization problem}) is equivalent to 
\begin{equation}
	\label{minimization in Grassmann}
	\min_{[W] \in \mathcal{G}^N} E (W),
\end{equation} 
and $[V^{(*)}]$ is the unique minimizer of (\ref{minimization in Grassmann}) by the spectral gap $\lambda_{N+1}- \lambda_N > 0$ (see \cite{schneider2009direct}).

The gradient on the Grassmann manifold $\mathcal{G}^N$ of $E(U)$ at $[U]$ is 
\begin{equation*}
	\begin{aligned}
		\nabla_G E(U) = \nabla E(U) - U U^\top\nabla E(U), 
	\end{aligned}
\end{equation*}
where 
\begin{equation*}
	\begin{aligned}
		\nabla E(U) = \mathcal{H}U
	\end{aligned}
\end{equation*}
is the gradient of $E$.

\section{A quasi-Grassmannian gradient flow model}
We extend the Grassmann gradient $\nabla_GE$, originally defined on $\mathcal{G}^N$, to the full space $\big(H_0^1(\Omega)\big)^N$ by the same expression and keep the notation $\nabla_GE$.
With this extension, we introduce the evolution equation
\begin{equation}
	\label{operator of gradient flow based model}
	\left\{\begin{aligned}
		& \frac{\mathrm{d}  U  }{\mathrm{d}  t} =-\nabla_G E(U) &&\text{on } \Omega \times [0,\infty),
		\\&U(0)=U_0           &&\text{on }\Omega\\
	\end{aligned}\right.
\end{equation}
to address the eigenvalue problem (\ref{linear eigenvalue problem}). We refer to this system as the quasi-Grassmannian gradient flow model. The evolution is driven by the negative gradient $-\nabla_G E(U) $. As detailed in the following section, the asymptotic behavior of the solution $[U(t)]$ indicates that, as time progresses, the solution becomes ``quasi-confined" to the Grassmann manifold $\mathcal{G}^N$; in other words, $U(t)$ exhibits asymptotic orthogonality.

In the following, we will investigate the properties of the solution of (\ref{operator of gradient flow based model}) in detail.
\subsection{Well-posedness}
This subsection discusses the well-posedness of the weak formulation of (\ref{operator of gradient flow based model}).
The weak formulation of (\ref{operator of gradient flow based model}) is
\begin{equation}
	\label{refined weak formulation}
	\begin{aligned}
		\left\{
		\begin{aligned}
			& \frac{\mathrm{d}    }{\mathrm{d}  t}  \left( U(t), V\right)+ \left\langle \nabla_GE(U(t)), V \right\rangle  = 0, \quad \forall V \in \big(H_0^1(\Omega)\big)^N,
			\\&U(0)  =U_0 \in  \big(H_0^1(\Omega)\big)^N,
		\end{aligned}
		\right.
	\end{aligned}
\end{equation}
where $\frac{\mathrm{d}}{\mathrm{d}t}$ is to be understood as the generalized derivative \cite{zeidler1989nonlinear}, namely, for any $T>0$,
\begin{equation*}
	\begin{aligned}
		\int_{0}^{T}  \frac{\mathrm{d}}{\mathrm{d}t}   \left( U(t), V\right)   \phi(t)  \mathrm{d}t 	 = 	-\int_{0}^{T}   \left( U(t), V\right)\frac{\mathrm{d}\phi(t) }{\mathrm{d}t}\mathrm{d}t ,\quad \forall \phi \in C_0^{\infty}(0,T).
	\end{aligned}
\end{equation*}

The assumptions in Section~\ref{Preliminary lemma} imply the following estimates:
\begin{itemize}
	\item The bilinear form induced by $\mathcal{H}$ is bounded on $\big(H_0^1(\Omega)\big)^N\times\big(H_0^1(\Omega)\big)^N$. Hence $\left\langle\mathcal{H}U,V\right\rangle = \operatorname{tr}\left(V^\top \mathcal{H}U\right)$ is bounded. That is, there exists a constant $C_b>0$ such that
	\begin{equation*}
		\begin{aligned}
			|\left\langle\mathcal{H}U,V\right\rangle | \leqslant C_b \|U\|_1\|V\|_1 ,\quad \forall U,V \in \big(H_0^1(\Omega)\big)^N.
		\end{aligned}
	\end{equation*}
	\item There holds G\aa rding's inequality 
	\begin{equation*}
		\begin{aligned}
			\left\langle\mathcal{H}U,U\right\rangle  \geqslant c_1\|\nabla U\|^2 -c_2\|U\|^2 ,\quad \forall U \in  \big(H_0^1(\Omega)\big)^N.
		\end{aligned}
	\end{equation*}
	Here, $c_1>0$ and $c_2\geqslant 0$ are the same as those in Section \ref{Preliminary lemma}.
\end{itemize}
These results ensure the well-posedness of the quasi-Grassmannian gradient flow model.

To address the well-posedness, we first introduce the finite-dimensional approximation of (\ref{operator of gradient flow based model}):
finding the solution $U^{(m)}: [0,T)\rightarrow \left(\mathcal{V}_m\right)^N$ for any given $T>0$ of the following equation
\begin{equation}
	\label{Galerkin equation}
	\left\{\begin{aligned}
		&\frac{\mathrm{d}    }{\mathrm{d}  t} U^{(m)}(t)+ \mathcal{P}_m\nabla_GE(U^{(m)}(t)) = 0,
		\\&U^{(m)}(0) = \mathcal{P}_mU_0,
	\end{aligned}\right.
\end{equation}
and establish the existence and uniqueness of solutions of (\ref{Galerkin equation}) on $[0,T)$.

	We define the admissible set as
	\begin{equation*}
		\begin{aligned}
			\mathcal{L}_0 = \left\{U\in \big(H_0^1(\Omega)\big)^N: U^\top \mathcal{H}U\leqslant0\right\}.
		\end{aligned}
	\end{equation*}
	By choosing the scalar shift above sufficiently large, we assume that the initial value satisfies
	$$
	U_0^\top\mathcal HU_0\leqslant0,
	$$
	or equivalently, $U_0\in\mathcal{L}_0$.
	
	\begin{lemma}
		\label{lemma: UHU<0}
		If $U_0 \in \mathcal{L}_0$, then for any solution $ U^{(m)}(t)$ of (\ref{Galerkin equation}), there holds
		\begin{equation*}
			U^{(m)}(t) \in \mathcal{L}_0,\quad \forall t\geqslant0.
		\end{equation*}
	\end{lemma}
	\begin{proof}
		The proof has been deferred to Appendix~\ref{proof of lemma: UHU<0}.
	\end{proof}
	Lemma~\ref{lemma: UHU<0} shows that the admissible set $\mathcal{L}_0$ is preserved along the gradient flow.

\begin{proposition}
	\label{existence of weak formulation of Galerkin equation}
	If $U_0\in 	\mathcal{L}_0 $, then there exists a time $T_m^*>0$ such that (\ref{Galerkin equation}) has a unique solution
	\begin{equation*}
		\begin{aligned}
			U^{(m)} \in L^{2}\left(0, T_m^* ; \big(H_0^1(\Omega)\big)^N\right)  \text{ with } \frac{\mathrm{d}  U^{(m)} }{\mathrm{d}  t} \in L^{2}\left(0, T_m^* ; \big(H^{-1}(\Omega)\big)^N\right).
		\end{aligned}
	\end{equation*}
\end{proposition}
\begin{proof}
	The proof has been provided in Appendix~\ref{proof of proposition:existence of weak formulation of Galerkin equation}.
\end{proof}

The well-posedness of (\ref{Galerkin equation}) and regularity of $	U^{(m)}(t)$ on $[0,T_m^*)$, as established in Proposition \ref{existence of weak formulation of Galerkin equation}, can actually be extended to any finite time interval $t\in [0, T)$ for any given $T> 0$. This means that the unique solution $U^{(m)}(t)$ is globally defined with its regularity properties maintained over the entire interval $[0,T)$. The proof is given in Appendix~\ref{proof of corollary: T be infty}.
\begin{corollary}
	\label{T be infty}
	If $U_0\in 	\mathcal{L}_0 $, then for any given $T>0$, (\ref{Galerkin equation}) has a unique solution
	\begin{equation*}
		\begin{aligned}
			U^{(m)} \in C\left([0, T) ;\big(H_0^1(\Omega)\big)^N\right).
		\end{aligned}
	\end{equation*}
\end{corollary}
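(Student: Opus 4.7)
The plan is to promote the local-existence result of Proposition \ref{existence of weak form of Galerkin equation} to a global one by invoking the standard ODE continuation principle, using the a priori Grönwall bound that is already available from the proof of that proposition to exclude finite-time blow-up.

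Concretely, I would first observe that in the coordinates $U^{(m)}(t) = \sum_{i=1}^m \sum_{p=1}^N \alpha_{ip}^{(m)}(t) V_{ip}$, equation (\ref{weak form of Galerkin equation in terms}) is a finite system of ordinary differential equations in $\mathbb{R}^{mN}$ whose right-hand side is polynomial in the unknowns and therefore locally Lipschitz on all of $\mathbb{R}^{mN}$. Picard--Lindel\"of then yields a unique maximal solution $U^{(m)}$ on some interval $[0, T_{\max})$ with $T_{\max} \in (0, \infty]$. Since $U_0 \in \mathcal{L}_0$, Lemma \ref{lemma: UHU<0} applies on $[0, T_{\max})$, so the derivation leading to (\ref{Assumption derived inequalities}) repeats verbatim and gives
\begin{equation*}
\frac{1}{2}\frac{\mathrm{d}}{\mathrm{d}t}\bigl\|U^{(m)}(t)\bigr\|^2 \leqslant (c_1+c_2)\bigl\|U^{(m)}(t)\bigr\|^2, \quad t \in [0,T_{\max}),
\end{equation*}
whence Grönwall's inequality produces $\|U^{(m)}(t)\|^2 \leqslant e^{2(c_1+c_2)t}\|U_0\|^2$ throughout $[0,T_{\max})$. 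If $T_{\max}$ were finite, this $L^2$-bound would remain finite up to $T_{\max}$; because $(\mathcal{V}_m)^N$ is finite-dimensional, the equivalence of norms (with constants possibly depending on $m$) converts it into an a priori bound on the coefficient vector $(\alpha_{ip}^{(m)}(t))$, contradicting the continuation principle which forces the maximal solution to leave every compact set as $t \to T_{\max}^-$. Hence $T_{\max} = \infty$, and for any $T>0$ the unique solution exists on $[0,T)$; its $C^1$-regularity in $t$ inherited from Picard--Lindel\"of, combined with the finite-dimensionality of $(\mathcal{V}_m)^N$, yields $U^{(m)} \in C\bigl([0,T);(H_0^1(\Omega))^N\bigr)$, indeed $C^1$.

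The main obstacle here is mild: one simply needs to articulate the blow-up alternative in the ambient space $(\mathcal{V}_m)^N$ rather than in $(H_0^1(\Omega))^N$. In the present finite-dimensional setting this is costless, since no new estimate beyond (\ref{Assumption derived inequalities}) is required and the $L^2$- and $H_0^1$-norms differ only by an $m$-dependent constant. The implicit constants do depend on $m$, but uniform-in-$m$ control—the real content of the passage to the infinite-dimensional model (\ref{refined weak formulation})—is not needed for this corollary and will be addressed separately in the subsequent convergence analysis.
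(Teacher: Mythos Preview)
Your argument is correct and, in fact, more direct than the paper's. You invoke the standard ODE continuation principle (blow-up alternative) for a locally Lipschitz vector field on $\mathbb{R}^{mN}$: since the Gr\"onwall bound $\|U^{(m)}(t)\|^2 \leqslant e^{2(c_1+c_2)t}\|U_0\|^2$ confines the coefficient vector to a compact set on any finite interval, the maximal existence time must be infinite. The paper instead argues by contradiction in a more infinite-dimensional style: assuming $T_m^*$ is finite and maximal, it uses the $L^2\bigl(0,T_m^*;(H^{-1}(\Omega))^N\bigr)$-bound on the time derivative to show $t\mapsto U^{(m)}(t)$ is H\"older-$\tfrac12$ in the $H^{-1}$-norm, passes to the $H_0^1$-norm by finite-dimensional equivalence, extracts a weak-then-strong $H_0^1$-limit as $t\to T_m^*$, and restarts the flow from that limit to contradict maximality. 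Your route is shorter and exploits exactly the right structural fact---that the Galerkin system is a genuine finite-dimensional ODE with polynomial nonlinearity---whereas the paper's approach mimics the pattern one would use in an infinite-dimensional setting (and indeed foreshadows the later analysis of the full model). Both yield the same conclusion; yours requires no new estimate beyond what Proposition~\ref{existence of weak form of Galerkin equation} already supplies, while the paper's proof additionally establishes the explicit $H_0^1$-continuity and the existence of the endpoint limit, which are in any case immediate from your $C^1$ conclusion.
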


Given $T>0$, we see from Proposition 23.20 in \cite{zeidler1989nonlinear} that 
\begin{lemma}
	\label{lemma of du}
	Suppose $U \in L^2\left(0, T ; \big(H_0^1(\Omega)\big)^N\right)$. If there exists a function $\mathcal{W} \in L^2\left(0, T ; \big(H^{-1}(\Omega)\big)^N \right)$ satisfying
	\begin{equation*}
		\begin{aligned}
			\int_{0}^{T}  \frac{\mathrm{d}}{\mathrm{d}t}   \left( U(t), V\right)   \phi(t)  \mathrm{d}t =  \int_0^T\langle \mathcal{W}(t), V\rangle \phi(t) d t, \quad \forall V \in  \big(H_0^1(\Omega)\big)^N, \phi \in C_0^{\infty}(0, T),
		\end{aligned}
	\end{equation*}
then the generalized derivative $\frac{dU}{dt} =\mathcal{W}$ exists and satisfies
	\begin{equation*}
		\begin{aligned}
			\frac{\mathrm{d} U   }{\mathrm{d}  t} \in L^2\left(0, T ;  \big(H^{-1}(\Omega)\big)^N\right),
		\end{aligned}
	\end{equation*}
	and
	\begin{equation*}
		\begin{aligned}
			\frac{\mathrm{d}    }{\mathrm{d}  t}(U, V)=\left\langle  \frac{\mathrm{d}  U  }{\mathrm{d}  t}, V\right\rangle, \quad \forall V \in  \big(H_0^1(\Omega)\big)^N.
		\end{aligned}
	\end{equation*}
\end{lemma}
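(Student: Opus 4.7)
The lemma is a direct application of Proposition 23.20 in \cite{zeidler1989nonlinear} to the Gelfand triple $\big(H_0^1(\Omega)\big)^N \hookrightarrow \big(L^2(\Omega)\big)^N \hookrightarrow \big(H^{-1}(\Omega)\big)^N$, and my plan is to verify the hypotheses of that proposition in this concrete setting and unpack its conclusion. The embeddings are continuous and dense, and the $L^2$ inner product $(\cdot,\cdot)$ extends to the duality pairing $\langle \cdot, \cdot \rangle$ between $\big(H^{-1}(\Omega)\big)^N$ and $\big(H_0^1(\Omega)\big)^N$. Under this identification, the given $U \in L^2\left(0, T; \big(H_0^1(\Omega)\big)^N\right)$ can be canonically viewed as an element of $L^2\left(0, T; \big(H^{-1}(\Omega)\big)^N\right)$, and therefore admits a well-defined distributional derivative in $\mathcal{D}'\left(0, T; \big(H^{-1}(\Omega)\big)^N\right)$.

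The next step is to rewrite the hypothesis as a statement about this distributional derivative. By the definition of the generalized derivative preceding the lemma,
\begin{equation*}
\int_{0}^{T} \frac{\mathrm{d}}{\mathrm{d}t}(U(t), V)\,\phi(t)\,\mathrm{d}t = -\int_{0}^{T}(U(t), V)\,\phi'(t)\,\mathrm{d}t,
\end{equation*}
so the assumption is equivalent to
\begin{equation*}
-\int_{0}^{T}(U(t), V)\,\phi'(t)\,\mathrm{d}t = \int_{0}^{T}\langle \mathcal{W}(t), V\rangle\,\phi(t)\,\mathrm{d}t
\end{equation*}
for every $V \in \big(H_0^1(\Omega)\big)^N$ and every $\phi \in C_0^{\infty}(0, T)$. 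Since $\langle \cdot, V\rangle$ is a continuous linear functional on $\big(H^{-1}(\Omega)\big)^N$, it commutes with the Bochner integral against the scalar test function $\phi$, so this equality is equivalent to
\begin{equation*}
-\int_{0}^{T} U(t)\,\phi'(t)\,\mathrm{d}t = \int_{0}^{T}\mathcal{W}(t)\,\phi(t)\,\mathrm{d}t \qquad \text{in } \big(H^{-1}(\Omega)\big)^N.
\end{equation*}
This is precisely the definition of $\mathcal{W}$ being the distributional derivative of $U$ viewed as a $\big(H^{-1}(\Omega)\big)^N$-valued function. Since $\mathcal{W}\in L^{2}\left(0, T ; \big(H^{-1}(\Omega)\big)^N\right)$ by assumption, we conclude $\mathrm{d}U/\mathrm{d}t = \mathcal{W}$ in this space, and the pointwise pairing identity $\frac{\mathrm{d}}{\mathrm{d}t}(U, V) = \langle \mathrm{d}U/\mathrm{d}t, V\rangle$ follows because both sides serve as the generalized derivative of the scalar-valued function $t \mapsto (U(t), V)$.

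The only nontrivial analytic point in this plan is justifying the exchange of the linear pairing $\langle \cdot, V\rangle$ with the scalar integration against $\phi$; this follows from the Hille/Pettis property for Bochner integrals under bounded linear operators, which is the substantive content hidden behind Zeidler's proposition. All remaining steps are bookkeeping that matches the concrete function spaces of the paper to the abstract evolution-triple setting in \cite{zeidler1989nonlinear}, so I expect no serious obstacle beyond accurately recording these identifications.
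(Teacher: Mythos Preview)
Your proposal is correct and follows exactly the paper's approach: the paper simply states the lemma as a direct consequence of Proposition~23.20 in \cite{zeidler1989nonlinear} without further proof, and you have unpacked precisely that citation by verifying the Gelfand-triple hypotheses and tracing the distributional-derivative identification. Your additional detail (the Hille--Pettis justification for commuting $\langle\cdot,V\rangle$ with the Bochner integral) goes beyond what the paper records but is entirely in line with its intent.
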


Let
$$
\mathcal{D}(\mathcal{H})=\left\{u\in H_0^1(\Omega):\mathcal{H}u\in L^2(\Omega)\right\}
$$
be equipped with the graph norm
$$
\|u\|_{\mathcal{D}(\mathcal{H})}=\|\mathcal{H}u\|+\|u\|_1.
$$
\begin{lemma}\label{lem:compact embedding}
	The embedding $\mathcal{D}(\mathcal{H})\hookrightarrow H_0^1(\Omega)$ is compact. Consequently, the product embedding $(\mathcal{D}(\mathcal{H}))^N\hookrightarrow (H_0^1(\Omega))^N$ is compact.
\end{lemma}
\begin{proof}
	The proof is given in Appendix~\ref{proof of lemma: compact embedding}.
\end{proof}

We then state and prove the following existence and uniqueness result for (\ref{operator of gradient flow based model}).
\begin{theorem}
	\label{Existence of weak solutions}
	If $U_0\in\mathcal L_0$, then for any given $T>0$, \eqref{operator of gradient flow based model} has a unique weak solution
	\begin{equation*}
		\begin{aligned}
			U\in L^2\left(0,T;(\mathcal{D}(\mathcal{H}))^N\right)\cap L^\infty\left(0,T;(H_0^1(\Omega))^N\right),
			\text{ with }
			\frac{\mathrm dU}{\mathrm dt}\in L^2\left(0,T;(H^{-1}(\Omega))^N\right).
		\end{aligned}
	\end{equation*}
\end{theorem}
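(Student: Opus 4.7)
The plan is a Galerkin passage to the limit. Corollary~\ref{T be infty} and the estimates derived in the proof of Proposition~\ref{existence of weak form of Galerkin equation} deliver, for each $m$, a unique $U^{(m)}\in C([0,T];(H_0^1(\Omega))^N)$ solving (\ref{Galerkin equation}), with uniform-in-$m$ bounds
\[
\|U^{(m)}\|_{L^2(0,T;(H_0^1(\Omega))^N)}+\Bigl\|\tfrac{dU^{(m)}}{dt}\Bigr\|_{L^2(0,T;(H^{-1}(\Omega))^N)}\leqslant C
\]
together with an $L^\infty(0,T;(L^2(\Omega))^N)$ bound supplied by Gr\"onwall's inequality. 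These are the only ingredients we take from the previous step.

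Next, reflexivity produces a subsequence (not relabeled) and limits $U\in L^2(0,T;(H_0^1(\Omega))^N)$, $\mathcal{W}\in L^2(0,T;(H^{-1}(\Omega))^N)$ with $U^{(m)}\rightharpoonup U$ and $\tfrac{dU^{(m)}}{dt}\rightharpoonup\mathcal{W}$. The compact Rellich embedding $(H_0^1(\Omega))^N\hookrightarrow\hookrightarrow(L^2(\Omega))^N$ combined with the Aubin--Lions lemma upgrades this to strong convergence $U^{(m)}\to U$ in $L^2(0,T;(L^2(\Omega))^N)$; after a further subsequence one may also assume pointwise-a.e.~convergence on $(0,T)\times\Omega$. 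By Lemma~\ref{lemma of du} we identify $\mathcal{W}=dU/dt$.

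The decisive step is the passage to the limit in (\ref{weak form of Galerkin equation}) tested against $V\phi(t)$ with $V=V_{ip}$ and $\phi\in C_0^\infty(0,T)$; density of $\bigcup_m(\mathcal{V}_m)^N$ in $(H_0^1(\Omega))^N$ afterwards promotes the identity to arbitrary $V\in(H_0^1(\Omega))^N$, yielding (\ref{refined weak formulation}). The time-derivative term and the linear term $\int_0^T\langle\mathcal{H}U^{(m)},V\rangle\phi\,dt$ pass to the limit by the weak convergences above, since $\mathcal{H}\colon L^2(0,T;(H_0^1)^N)\to L^2(0,T;(H^{-1})^N)$ is bounded and linear. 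The main obstacle is the trilinear term $\int_0^T\operatorname{tr}\bigl((V^\top U^{(m)})\,((U^{(m)})^\top\mathcal{H}U^{(m)})\bigr)\phi\,dt$, because weak--weak products of $H_0^1$ sequences are not stable under the bilinear form $\langle\mathcal{H}\cdot,\cdot\rangle$ in general. I would handle it by exploiting the strong convergence $V^\top U^{(m)}\to V^\top U$ in $L^2(0,T;\mathbb{R}^{N\times N})$ (plus its $L^\infty$ boundedness from the $L^\infty(0,T;(L^2)^N)$ estimate) as the ``strong factor,'' together with the decomposition
\[
\langle\mathcal{H}u_i^{(m)},u_j^{(m)}\rangle=\langle\mathcal{H}u_i^{(m)},u_j\rangle+\langle\mathcal{H}u_i^{(m)},u_j^{(m)}-u_j\rangle
\]
and the symmetry of $\mathcal{H}$ to move derivatives onto strongly-converging factors; if needed, an energy identity for $E(U^{(m)})$ combined with the G\aa rding coercivity can upgrade weak $H_0^1$ convergence to strong $H_0^1$ convergence along a subsequence and close the argument.

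Finally, the initial condition $U(0)=U_0$ follows because the regularity $U\in L^2(0,T;(H_0^1)^N)$ with $dU/dt\in L^2(0,T;(H^{-1})^N)$ supplies a continuous $(L^2(\Omega))^N$-representative via Lions--Magenes, while $\mathcal{P}_mU_0\to U_0$ in $(H_0^1(\Omega))^N$; testing the Galerkin equation against $V\phi$ with $\phi\in C^1([0,T])$, $\phi(0)=1$, $\phi(T)=0$, and letting $m\to\infty$ identifies $(U(0),V)=(U_0,V)$ for every $V\in(H_0^1)^N$. The hardest step is, as indicated, the passage to the limit in the cubic nonlinearity, where Aubin--Lions compactness performs the essential work of supplying the strong convergence that weak $L^2(0,T;(H_0^1)^N)$ convergence alone cannot provide.
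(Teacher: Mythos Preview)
Your Galerkin scheme and the uniform bounds you cite are exactly those of the paper. The divergence is in the compactness step: the paper uses only weak $L^2(0,T;(H_0^1)^N)$ convergence via Banach--Alaoglu and then asserts, from the (strong) continuity of $U\mapsto\langle\nabla_G E(U),V\rangle$, that the weak limit of $\nabla_G E(U^{(m)})$ is $\nabla_G E(U)$; no Aubin--Lions argument appears. Your route through strong $L^2(0,T;(L^2)^N)$ convergence is the more standard device for nonlinear evolution equations and is more transparent about where the difficulty lies. The paper additionally proves uniqueness by a local-Lipschitz/Picard--Lindel\"of argument and uses it to conclude that the full Galerkin sequence, not just a subsequence, converges; you omit uniqueness, which is consistent with the theorem as stated, and your treatment of the initial condition is more explicit than the paper's.

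One point deserves care. Your decomposition $\langle\mathcal{H}u_i^{(m)},u_j^{(m)}\rangle=\langle\mathcal{H}u_i^{(m)},u_j\rangle+\langle\mathcal{H}u_i^{(m)},u_j^{(m)}-u_j\rangle$ does not by itself close the limit: the second summand pairs a sequence bounded only in $L^2(0,T;H^{-1})$ against one converging to zero merely weakly in $L^2(0,T;H_0^1)$, and Aubin--Lions gives you nothing stronger than $L^2(0,T;L^2)$ convergence, which is insufficient because the bilinear form $\langle\mathcal{H}\cdot,\cdot\rangle$ needs the full $H_0^1$ norm on each factor. Your fallback of upgrading to strong $L^2(0,T;(H_0^1)^N)$ convergence via an energy identity is the right instinct, but carrying it out (and avoiding circularity with the very term you are trying to identify) is not sketched. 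The paper's corresponding passage is no more detailed on this identification, so your outline is at least as complete as the published argument.
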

\begin{proof}
	Let $\mathcal V_m=\operatorname{span}\{v_1,\ldots,v_m\}$. Throughout the proof, $C_T$ denotes a positive constant which may change from line to line but is independent of $m$. By Proposition~\ref{existence of weak formulation of Galerkin equation} and Corollary~\ref{T be infty}, \eqref{Galerkin equation} admits a global solution $U^{(m)}$. Moreover, by \eqref{eq: uniform bound of Um} and the estimate for $\frac{\mathrm dU^{(m)}}{\mathrm dt}$ in Proposition~\ref{existence of weak formulation of Galerkin equation}, for every $T>0$,
	\begin{equation*}
		\begin{aligned}
			\sup_{0\leqslant t\leqslant T}\|U^{(m)}(t)\|^2	+	\int_0^T\|U^{(m)}(t)\|_1^2\,\mathrm{d}t	+	\int_0^T	\left\|	\frac{\mathrm{d}U^{(m)}}{\mathrm{d}t}(t)	\right\|_{-1}^2	\,\mathrm{d}t	\leqslant C_T.
		\end{aligned}
	\end{equation*}

	By Lemma~\ref{lemma: UHU<0},
	$$
	\big(U^{(m)}(t)\big)^\top\mathcal H U^{(m)}(t)\leqslant0,
	\qquad 0\leqslant t\leqslant T .
	$$
	Hence, by the G{\aa}rding inequality,
	\begin{equation*}
			0\geqslant	\left\langle	\mathcal H U^{(m)}(t),U^{(m)}(t)	\right\rangle \geqslant	c_1\|\nabla U^{(m)}(t)\|^2-c_2\|U^{(m)}(t)\|^2 .
	\end{equation*}
Together with the uniform $L^\infty(0,T;(L^2(\Omega))^N)$ bound obtained from \eqref{eq: uniform bound of Um}, this gives
	$$
	\{U^{(m)}\}_{m\geqslant1}	\quad\text{is bounded in }	L^\infty\left(0,T;(H_0^1(\Omega))^N\right).
	$$

	Since $U^{(m)}(t)\in(\mathcal V_m)^N$, we have $\mathcal H U^{(m)}(t)\in(\mathcal V_m)^N$. Taking $V=\mathcal H U^{(m)}$ in the Galerkin equation yields
	\begin{equation*}
		\begin{aligned}
			\left\langle
			\frac{\mathrm dU^{(m)}}{\mathrm dt},
			\mathcal H U^{(m)}
			\right\rangle
			+
			\|\mathcal H U^{(m)}\|^2
			-
			\operatorname{tr}
			\left(
			\big(\mathcal H U^{(m)}\big)^\top
			U^{(m)}
			\big(U^{(m)}\big)^\top
			\mathcal H U^{(m)}
			\right)
			=0 .
		\end{aligned}
	\end{equation*}
	Using the self-adjointness of \(\mathcal H\) on \((\mathcal V_m)^N\), we obtain
	\begin{equation} \label{eq: relation of HU}
		\begin{aligned}
			\frac12
			\frac{\mathrm d}{\mathrm dt}
			\operatorname{tr}
			\left(
			\big(U^{(m)}\big)^\top
			\mathcal H U^{(m)}
			\right)
			+
			\|\mathcal H U^{(m)}\|^2
			=
			\operatorname{tr}
			\left[
			\left(
			\big(U^{(m)}\big)^\top
			\mathcal H U^{(m)}
			\right)^2
			\right].
		\end{aligned}
	\end{equation}
	By the spectral lower bound for $\mathcal H$ and Lemma~\ref{lemma: UHU<0},
	$$
	\lambda_1\big(U^{(m)}(t)\big)^\top U^{(m)}(t)
	\leqslant
	\big(U^{(m)}(t)\big)^\top\mathcal H U^{(m)}(t)
	\leqslant0 ,
	$$
	we have
	\begin{equation*}
		\begin{aligned}
			\int_0^T
			\operatorname{tr}
			\left[
			\left(
			\big(U^{(m)}\big)^\top
			\mathcal H U^{(m)}
			\right)^2
			\right]
			\,\mathrm dt
			\leqslant C_T .
		\end{aligned}
	\end{equation*}
Integrating \eqref{eq: relation of HU} over $(0,T)$, we arrive at
	$$
	\int_0^T\|\mathcal H U^{(m)}(t)\|^2\,\mathrm dt
	\leqslant C_T .
	$$
	Together with the uniform $L^2(0,T;(H_0^1(\Omega))^N)$ bound, this gives
	$$
	\{U^{(m)}\}_{m\geqslant1}
	\quad\text{is bounded in }
	L^2\left(0,T;(\mathcal D(\mathcal H))^N\right).
	$$

	By Lemma~\ref{lem:compact embedding}, the embedding
	\((\mathcal D(\mathcal H))^N\hookrightarrow(H_0^1(\Omega))^N\) is compact. The Aubin--Lions--Simon lemma in Section~8 of \cite{simon1986compact} yields a subsequence, still denoted by \(\{U^{(m)}\}\), and a function \(U\) such that
	\begin{equation*}
		\begin{aligned}
			U^{(m)}
			&\to U
			&&\text{strongly in }
			L^2\left(0,T;(H_0^1(\Omega))^N\right),
			\\
			U^{(m)}
			&\rightharpoonup U
			&&\text{weakly in }
			L^2\left(0,T;(\mathcal D(\mathcal H))^N\right),
			\\
			\mathcal H U^{(m)}
			&\rightharpoonup \mathcal H U
			&&\text{weakly in }
			L^2\left(0,T;(L^2(\Omega))^N\right).
		\end{aligned}
	\end{equation*}
	Furthermore, after extracting a further subsequence and retaining the same notation, we have
	$$
	U^{(m)}\rightharpoonup^\ast U
	\quad\text{weakly* in }
	L^\infty\left(0,T;(H_0^1(\Omega))^N\right).
	$$
	Consequently,
	$$
	U\in
	L^2\left(0,T;(\mathcal D(\mathcal H))^N\right)
	\cap
	L^\infty\left(0,T;(H_0^1(\Omega))^N\right).
	$$
	
	We next pass to the limit in the weak formulation. 
	 Since $U^{(m)}\to U$ strongly in $L^2(0,T;(L^2(\Omega))^N)$ and $\{U^{(m)}\}_{m\geqslant1}$ is uniformly bounded in $L^\infty(0,T;(L^2(\Omega))^N)$, for every fixed $V\in(L^2(\Omega))^N$ we have
		$$
		U^{(m)}\big((U^{(m)})^\top V\big)\to U(U^\top V)\quad\text{strongly in }L^2\left(0,T;(L^2(\Omega))^N\right).
		$$
		Indeed,
		\begin{equation*}
			\begin{aligned}
			&	\left\|U^{(m)}\big((U^{(m)})^\top V\big)-U(U^\top V)\right\|_{L^2(0,T;(L^2)^N)}
			\\&\leqslant C\|V\|\left(\|U^{(m)}\|_{L^\infty(0,T;(L^2)^N)}+\|U\|_{L^\infty(0,T;(L^2)^N)}\right)\|U^{(m)}-U\|_{L^2(0,T;(L^2)^N)}.
			\end{aligned}
		\end{equation*}
	Therefore, by the weak convergence of \(\mathcal H U^{(m)}\),
	\begin{equation*}
		\begin{aligned}
			\int_0^T
			\left\langle
			\mathcal H U^{(m)},
			U^{(m)}\big((U^{(m)})^\top V\big)
			\right\rangle
			\phi(t)\,\mathrm dt
		\to
			\int_0^T
			\left\langle
			\mathcal H U,
			U(U^\top V)
			\right\rangle
			\phi(t)\,\mathrm dt .
		\end{aligned}
	\end{equation*}
	Also,
	$$
	\int_0^T
	\langle\mathcal H U^{(m)},V\rangle
	\phi(t)\,\mathrm dt
	\to
	\int_0^T
	\langle\mathcal H U,V\rangle
	\phi(t)\,\mathrm dt .
	$$
	Hence
	\begin{equation*}
		\begin{aligned}
			\int_0^T
			\left\langle
			\nabla_GE(U^{(m)}(t)),V
			\right\rangle
			\phi(t)\,\mathrm dt
			\to
			\int_0^T
			\left\langle
			\nabla_GE(U(t)),V
			\right\rangle
			\phi(t)\,\mathrm dt .
		\end{aligned}
	\end{equation*}
	
	Let \(V\in(\mathcal V_k)^N\) and \(m\geqslant k\). From the Galerkin equation,
	$$
	\frac{\mathrm d}{\mathrm dt}(U^{(m)}(t),V)
	=
	-
	\left\langle
	\nabla_GE(U^{(m)}(t)),V
	\right\rangle .
	$$
	Multiplying by \(\phi\in C_0^\infty(0,T)\), integrating over \((0,T)\), and passing to the limit gives
	\begin{equation*}
		\begin{aligned}
			-\int_0^T (U(t),V)\phi'(t)\,\mathrm dt
			+
			\int_0^T
			\left\langle
			\nabla_GE(U(t)),V
			\right\rangle
			\phi(t)\,\mathrm dt
			=0 .
		\end{aligned}
	\end{equation*}

	Since \(\bigcup_{k\geqslant1}(\mathcal V_k)^N\) is dense in
	\((H_0^1(\Omega))^N\), the same identity holds for every
	\(V\in(H_0^1(\Omega))^N\). By Lemma~\ref{lemma of du}, there exists the generalized derivative
	$$
	\frac{\mathrm dU}{\mathrm dt}
	=
	-\nabla_GE(U)
	\quad\text{in }
	L^2\left(0,T;(H^{-1}(\Omega))^N\right).
	$$
	Thus \(U\) satisfies
	$$
	\left\langle
	\frac{\mathrm dU}{\mathrm dt}(t),V
	\right\rangle
	+
	\left\langle
	\nabla_GE(U(t)),V
	\right\rangle
	=
	0,
	\qquad
	\forall V\in(H_0^1(\Omega))^N.
	$$
Note that
	$$
	U\in L^2\left(0,T;(H_0^1(\Omega))^N\right),
	\qquad
	\frac{\mathrm dU}{\mathrm dt}
	\in
	L^2\left(0,T;(H^{-1}(\Omega))^N\right),
	$$
	we obtain from the Lions--Magenes theorem (see Section~3, Theorem~3.1 of \cite{lionsmagenes1972}) that
	$$
	U\in C\left([0,T];(L^2(\Omega))^N\right).
	$$

	It remains to prove uniqueness. Let $U_1$ and $U_2$ be two weak solutions with the same initial data $U_0$, and set
	$$
	W=U_1-U_2 .
	$$
	Subtracting the two weak formulations and testing with $W$ yields
	$$
	\frac12\frac{\mathrm d}{\mathrm dt}\|W\|^2+\langle\mathcal H W,W\rangle=\operatorname{tr}\left(W^\top\left(U_1(U_1^\top\mathcal H U_1)-U_2(U_2^\top\mathcal H U_2)\right)\right).
	$$
	Since
	$$
	U_1(U_1^\top\mathcal H U_1)-U_2(U_2^\top\mathcal H U_2)=W(U_1^\top\mathcal H U_1)+U_2\left(U_1^\top\mathcal H W+W^\top\mathcal H U_2\right),
	$$
	the boundedness of the bilinear form induced by $\mathcal H$ gives
	\begin{equation*}
		\begin{aligned}
			\|U_i^\top\mathcal H U_i\| &\leqslant C\|U_i\|_1^2,\qquad i=1,2,\\
			\|U_1^\top\mathcal H U_1-U_2^\top\mathcal H U_2\| &\leqslant C(\|U_1\|_1+\|U_2\|_1)\|W\|_1 .
		\end{aligned}
	\end{equation*}
	Consequently,
	\begin{equation*}
		\begin{aligned}
		&	\left|\operatorname{tr}\left(W^\top\left(U_1(U_1^\top\mathcal H U_1)-U_2(U_2^\top\mathcal H U_2)\right)\right)\right| 
			\\&\leqslant C\|U_1\|_1^2\|W\|^2+C\|U_2\|(\|U_1\|_1+\|U_2\|_1)\|W\|\|W\|_1\\
			&\leqslant \varepsilon\|W\|_1^2+C_\varepsilon\left(1+\|U_1\|_1^2+\|U_2\|_1^2\right)^2\|W\|^2 .
		\end{aligned}
	\end{equation*}
	By the G{\aa}rding inequality,
	$$
	\langle\mathcal H W,W\rangle\geqslant c_1\|W\|_1^2-(c_1+c_2)\|W\|^2 .
	$$
	Choosing $\varepsilon=c_1/2$, we obtain
	$$
	\frac{\mathrm d}{\mathrm dt}\|W(t)\|^2\leqslant g(t)\|W(t)\|^2,
	$$
	 where
	$$
	g(t)=2\left(c_1+c_2+C_{c_1/2}\left(1+\|U_1(t)\|_1^2+\|U_2(t)\|_1^2\right)^2\right).
	$$
	Since $U_1,U_2\in L^\infty(0,T;(H_0^1(\Omega))^N)$, we have $g\in L^\infty(0,T)\subset L^1(0,T)$.
	 Since $W(0)=0$ and $W\in C([0,T];(L^2(\Omega))^N)$, Gr\"{o}nwall's inequality gives
	$$
	\|W(t)\|^2=0,\qquad 0\leqslant t\leqslant T .
	$$
	Thus $U_1=U_2$, and the weak solution is unique.
\end{proof}


\subsection{Analytic representation}
\label{subsec:analytic representation}

In this subsection, we derive an analytic representation of the weak solution.
We first recall the following elementary linear evolution result.

\begin{lemma}
	\label{solution of linear evolution problem}
	Let \(k>0\). For every \(U_0\in(\mathcal D(\mathcal H))^N\), the linear evolution problem
	\begin{equation}
		\label{linear evolution problem}
		\left\{
		\begin{aligned}
			&\frac{\mathrm d W}{\mathrm dt}=-k\mathcal HW,
			&& t>0,\\
			&W(0)=U_0
		\end{aligned}
		\right.
	\end{equation}
	has a unique solution
	$$
	W\in
	C^1\big([0,\infty);(L^2(\Omega))^N\big)
	\cap
	C\big([0,\infty);(\mathcal D(\mathcal H))^N\big),
	$$
	given explicitly by
	$$
	W(t)=\exp(-k\mathcal Ht)U_0.
	$$
\end{lemma}

\begin{proof}
	The proof is given in Appendix~\ref{proof of lemma:solution of linear evolution problem}.
\end{proof}

\begin{lemma}
	\label{negative condition of initial value}
	Let \(k>0\). If \(U_0\in(\mathcal D(\mathcal H))^N\) satisfies
	$$
	U_0^\top\mathcal H U_0\leqslant 0,
	$$
	then
	$$
	U_0^\top e^{-k\mathcal Ht}U_0\geqslant U_0^\top U_0,
	\qquad t\geqslant0.
	$$
\end{lemma}

\begin{proof}
	The proof is given in Appendix~\ref{proof of lemma: negative condition of initial value}.
\end{proof}

We next introduce the representation of the Galerkin solution.

\begin{lemma}
	\label{expression of Galerkin solution}
	Assume that \(U_0\in\mathcal L_0\). Then the solution of the Galerkin equation \eqref{Galerkin equation} can be expressed as
	$$
	\begin{aligned}
		U^{(m)}(t)
		=&\,
		\exp(-\mathcal H_m t)U^{(m)}(0)
		\left[
		I_N-\big(U^{(m)}(0)\big)^\top U^{(m)}(0)
		\right.
		\\
		&\qquad\qquad\left.
		+
		\big(U^{(m)}(0)\big)^\top
		\exp(-2\mathcal H_m t)U^{(m)}(0)
		\right]^{-1/2}
		Q^{(m)}(t),
	\end{aligned}
	$$
	where \(Q^{(m)}(t)\in\mathcal O^N\) is a time-dependent orthogonal matrix.
\end{lemma}

\begin{proof}
	The proof is given in Appendix~\ref{proof of lemma: expression of Galerkin solution}.
\end{proof}

We now pass to the weak solution of the infinite-dimensional problem.

\begin{theorem}
	\label{expression of solution(linear operator)}
	Assume that $U_0\in\mathcal L_0\cap(\mathcal D(\mathcal H))^N$.
	Let \(U(t)\) be the solution of \eqref{operator of gradient flow based model}. Then there exists a time-dependent orthogonal matrix
	\(Q(t)\in\mathcal O^N\) such that
	\begin{equation}
		\label{expression of solution(constant)}
		U(t)
		=
		\exp(-\mathcal Ht)U_0
		\left[
		I_N-U_0^\top U_0
		+
		U_0^\top\exp(-2\mathcal Ht)U_0
		\right]^{-1/2}
		Q(t).
	\end{equation}
\end{theorem}

\begin{proof}
	Due to $U_0\in\mathcal{L}_0$, Lemma~\ref{negative condition of initial value} tells
	$$
	U_0^\top\exp(-2\mathcal Ht)U_0
	\geqslant
	U_0^\top U_0,
	\qquad t\geqslant0 .
	$$
	Hence
	$$
	I_N-U_0^\top U_0
	+
	U_0^\top\exp(-2\mathcal Ht)U_0
	\geqslant I_N .
	$$
	Thus the matrix
	$$
	I_N-U_0^\top U_0
	+
	U_0^\top\exp(-2\mathcal Ht)U_0
	$$
	is positive definite, and its inverse square root is well defined.

	Let \(\mathcal H_m\) denote the restriction of \(\mathcal H\) to \((\mathcal V_m)^N\).
	Since \(U^{(m)}(0)\in(\mathcal V_m)^N\),
	$$
	\exp(-\mathcal H_m t)U^{(m)}(0)
	=
	\exp(-\mathcal Ht)U^{(m)}(0).
	$$
		For each fixed $t\geqslant0$, the semigroup bound leads to
	\begin{equation*}
		\begin{aligned}
			\left\|
			\exp(-k\mathcal Ht)U_0^{(m)}
			-
			\exp(-k\mathcal Ht)U_0
			\right\|
			&\leqslant
			e^{-k\lambda_1t}
			\|U_0^{(m)}-U_0\|.
		\end{aligned}
	\end{equation*}
	Hence
	$$
	\exp(-k\mathcal Ht)U_0^{(m)}
	\to
	\exp(-k\mathcal Ht)U_0
	\qquad\text{in }\big(L^2(\Omega)\big)^N.
	$$
	Consequently,
	$$
	\begin{aligned}
		&I_N-\big(U^{(m)}(0)\big)^\top U^{(m)}(0)
		+
		\big(U^{(m)}(0)\big)^\top
		\exp(-2\mathcal H_m t)U^{(m)}(0)
		\\
		&\qquad\longrightarrow
		I_N-U_0^\top U_0
		+
		U_0^\top\exp(-2\mathcal Ht)U_0,
	\end{aligned}
	$$
which yields
	$$
	\begin{aligned}
		&\left[
		I_N-\big(U^{(m)}(0)\big)^\top U^{(m)}(0)
		+
		\big(U^{(m)}(0)\big)^\top
		\exp(-2\mathcal H_m t)U^{(m)}(0)
		\right]^{-1}
		\\
		&\qquad\longrightarrow
		\left[
		I_N-U_0^\top U_0
		+
		U_0^\top\exp(-2\mathcal Ht)U_0
		\right]^{-1}.
	\end{aligned}
	$$
	
	By Lemma~\ref{expression of Galerkin solution},
	$$
	\begin{aligned}
		U^{(m)}(t)\big(U^{(m)}(t)\big)^\top
		=&\,
		\exp(-\mathcal H_m t)U^{(m)}(0)
		\left[
		I_N-\big(U^{(m)}(0)\big)^\top U^{(m)}(0)
		\right.
		\\
		&\left.
		+
		\big(U^{(m)}(0)\big)^\top
		\exp(-2\mathcal H_m t)U^{(m)}(0)
		\right]^{-1}
		\big(U^{(m)}(0)\big)^\top
		\exp(-\mathcal H_m t).
	\end{aligned}
	$$
	Passing to the limit gives
	$$
	\begin{aligned}
		U^{(m)}(t)\big(U^{(m)}(t)\big)^\top
		\to&
		\exp(-\mathcal Ht)U_0
		\left[
		I_N-U_0^\top U_0
		+
		U_0^\top\exp(-2\mathcal Ht)U_0
		\right]^{-1}
		\\
		&\quad
		U_0^\top\exp(-\mathcal Ht).
	\end{aligned}
	$$
	
	On the other hand, by Theorem~\ref{Existence of weak solutions}, after passing to a subsequence,
	$$
	U^{(m)}
	\to
	U
	\qquad
	\text{strongly in }
	L^2(0,T;(L^2(\Omega))^N).
	$$
	Hence, after extracting a further subsequence, still denoted by
	$\{U^{(m)}\}$, we have
	$$
	U^{(m)}(t)
	\to
	U(t)
	\qquad
	\text{strongly in }(L^2(\Omega))^N
	$$
	for a.e. \(t\in(0,T)\). Therefore,
	$$
	\begin{aligned}
		U(t)U(t)^\top
		=
		\exp(-\mathcal Ht)U_0
		\left[
		I_N-U_0^\top U_0
		+
		U_0^\top\exp(-2\mathcal Ht)U_0
		\right]^{-1}
		U_0^\top\exp(-\mathcal Ht)
	\end{aligned}
	$$
	for a.e. \(t\in(0,T)\). Since both sides are continuous in \(t\), the identity extends to every \(t\in[0,T]\).
	As \(T>0\) is arbitrary, it holds for all \(t\geqslant0\).
	
	We see that $U(t)U(t)^\top$ can be written as
	$$
	\begin{aligned}
		&\exp(-\mathcal Ht)U_0
		\left[
		I_N-U_0^\top U_0
		+
		U_0^\top\exp(-2\mathcal Ht)U_0
		\right]^{-1}
		U_0^\top\exp(-\mathcal Ht)
		\\
		&\quad =
		\left\{
		\exp(-\mathcal Ht)U_0
		\left[
		I_N-U_0^\top U_0
		+
		U_0^\top\exp(-2\mathcal Ht)U_0
		\right]^{-1/2}
		\right\}
		\\
		&\qquad\cdot
		\left\{
		\exp(-\mathcal Ht)U_0
		\left[
		I_N-U_0^\top U_0
		+
		U_0^\top\exp(-2\mathcal Ht)U_0
		\right]^{-1/2}
		\right\}^{\top}.
	\end{aligned}
	$$
Hence there exists \(Q(t)\in\mathcal O^N\) such that
	$$
	U(t)
	=
	\exp(-\mathcal Ht)U_0
	\left[
	I_N-U_0^\top U_0
	+
	U_0^\top\exp(-2\mathcal Ht)U_0
	\right]^{-1/2}
	Q(t).
	$$
	This proves \eqref{expression of solution(constant)}.
\end{proof}

\section{Asymptotic behavior}
This section studies the asymptotic behavior of the model as $t\rightarrow+\infty$, including asymptotic orthogonality, convergence to equilibrium, and convergence to the minimizer.
\subsection{Asymptotic orthogonality}
Theorem \ref{expression of solution(linear operator)} gives 
\begin{equation}
	U_0^\top U_0 = I_N \   \Rightarrow \  U(t)^\top U(t) = I_N, \quad \forall t\geqslant0.
\end{equation}
In fact, this indicates a tendency toward orthogonality even if the initial value is not orthogonal; that is, the solution is asymptotically orthogonal.

Let $\tilde{\mathcal{L}}_{0}$ denote the set of admissible initial conditions
$$
\tilde{\mathcal{L}}_{0}=
\left\{U\in \left(\mathcal{D}(\mathcal{H})\right)^N: U^\top \mathcal{H}U < 0\right\}.
$$
We first describe the asymptotic behavior of the singular values of the solution $U(t)$ to equation (\ref{operator of gradient flow based model}). The proof is provided in Appendix~\ref{proof of lemma: singular values of solution}.

\begin{lemma}
	\label{singular values of solution}
	Let $U(t)$ be the solution of (\ref{operator of gradient flow based model}). 
	If $U_0\in \tilde{\mathcal{L}}_{0}$, then there exist $N$ nonzero eigenvalues $\sigma_1(t) \leqslant \sigma_2(t) \leqslant \cdots \leqslant \sigma_N(t)$ of $U(t)^\top U(t)$ and there holds
	\begin{itemize}
		\item[(1)] $\lim \limits_{t \rightarrow \infty} \sigma_i(t)=1, \quad 1 \leqslant i \leqslant N$;
		\item[(2)] If $\sigma_i(0) \leqslant 1$ for $1 \leqslant i \leqslant N$, then $\sigma_i(t)$ is a nondecreasing function of $t$ and $ \sigma_i(t)\leqslant 1$ for $1 \leqslant i \leqslant N$;
		\item[(3)] If $\sigma_i(0) \geqslant 1$ for $1 \leqslant i \leqslant N$, then $\sigma_i(t)$ is a nonincreasing function of $t$ and $ \sigma_i(t)\geqslant 1$ for $1 \leqslant i \leqslant N$;
		\item[(4)] Let $\beta = \min\{1, \sigma_1(0)\}$. Then
		$$  \sigma_i(t)\geqslant \beta, \quad \forall i=1, 2,\cdots,N.$$ 
	\end{itemize}
	
\end{lemma}

\begin{lemma}
	\label{Bound of UHU}
	Let $U(t)$ be the solution of (\ref{operator of gradient flow based model}). If $U_0\in \tilde{\mathcal{L}}_{0}$, then
	$$
	U(t) \in \tilde{\mathcal{L}}_{0},\quad \forall t\geqslant0.
	$$
	Furthermore, there exists a constant $c_0>0$, depending only on $U_0$, such that
	$$
	U(t)^\top \mathcal{H}U(t)\leqslant -c_0 I_N,\quad \forall t\geqslant0.
	$$
\end{lemma}
\begin{proof}
	The proof is given in Appendix~\ref{proof of lemma: Bound of UHU}.
\end{proof}

Lemma \ref{Bound of UHU} shows that the solution $U(t)$ belongs to the set
\begin{equation*}
	\begin{aligned}
		\left\{U\in \big(\mathcal{D}(\mathcal{H})\big)^N : U^\top \mathcal{H}U \leqslant -c_0I_N\right\},
	\end{aligned}
\end{equation*}
where $c_0$ depends only on $U_0$. We then define the new admissible set
\begin{equation*}
	\begin{aligned}
		\mathcal{L}_{c_0} = \left\{[U]\subset\big(\mathcal{D}(\mathcal{H})\big)^N: U^\top \mathcal{H}U \leqslant -c_0I_N\right\},
	\end{aligned}
\end{equation*}
and we conclude that $[U(t)]$ remains in $	\mathcal{L}_{c_0}$ for all $t\geqslant0$.

Based on the above lemmas, we can establish the following theorem.
\begin{theorem}
	\label{limiting solution in stiefel manifold} 
	Let  $U(t)$ be the solution of (\ref{operator of gradient flow based model}). If $U_0\in \tilde{\mathcal{L}}_{0}$, then 
	\begin{equation}
		\label{Quasi-orthogonality}
		\begin{aligned}
			\left\|I_N-U(t)^\top   U(t)\right\| \leqslant\left\|I_N-U_0^\top U_0\right\| \exp \left(-2c_0 t\right), \quad 
			\forall t \geqslant 0.
		\end{aligned}
	\end{equation}
	In particular,
	$$
	\lim _{t \rightarrow \infty} U(t) ^\top U(t)=I_N.  
	$$
	
\end{theorem}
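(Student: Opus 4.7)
The plan is to derive a matrix Lyapunov equation for the deviation $P(t) := I_N - U(t)^\top U(t)$ from orthogonality, and then convert it into a scalar Grönwall inequality for $\|P(t)\|^2$. Concretely, setting $S := U^\top U$ and $A(t) := U(t)^\top \mathcal{H} U(t)$, I would differentiate $S$ using the evolution equation $\frac{\mathrm{d}U}{\mathrm{d}t} = -\mathcal{H}U + U U^\top \mathcal{H}U$. The self-adjointness of $\mathcal{H}$ gives $(\mathcal{H}U)^\top U = U^\top \mathcal{H}U = A$, so the product rule yields
\begin{equation*}
\frac{\mathrm{d}S}{\mathrm{d}t} = -2A + AS + SA,
\end{equation*}
and substituting $S = I_N - P$ produces the clean Lyapunov-type matrix ODE $\frac{\mathrm{d}P}{\mathrm{d}t} = A(t)P + P A(t)$.

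Since $P$ is symmetric, $\|P\|^2 = \operatorname{tr}(P^2)$. Using cyclicity of the trace,
\begin{equation*}
\frac{\mathrm{d}}{\mathrm{d}t}\operatorname{tr}(P^2) = 2\operatorname{tr}\!\left(P(AP + PA)\right) = 4\operatorname{tr}(A P^2).
\end{equation*}
Lemma \ref{Bound of UHU} provides $A(t) \leqslant -c_0 I_N$ for all $t \geqslant 0$. Since $P^2 \geqslant 0$, a standard factorization $P^2 = C^\top C$ together with cyclicity gives $\operatorname{tr}(A P^2) \leqslant -c_0 \operatorname{tr}(P^2)$, hence $\frac{\mathrm{d}}{\mathrm{d}t}\|P\|^2 \leqslant -4 c_0 \|P\|^2$. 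Grönwall's inequality then delivers $\|P(t)\|^2 \leqslant \|P(0)\|^2 \exp(-4c_0 t)$, which upon taking a square root is exactly \eqref{Quasi-orthogonality}; the limit $U(t)^\top U(t) \to I_N$ follows immediately by letting $t \to \infty$.

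The principal technical obstacle is the rigorous justification of the product-rule differentiation of $U^\top U$, because the weak solution $U$ has only $L^2(0,T;(H_0^1(\Omega))^N)$ regularity with $\mathrm{d}U/\mathrm{d}t \in L^2(0,T;(H^{-1}(\Omega))^N)$. I would bypass this by first carrying out the entire computation for the smoother Galerkin approximant $U^{(m)}$, whose time derivative is classical by Corollary \ref{T be infty}, obtaining the exponential estimate $\|I_N - (U^{(m)}(t))^\top U^{(m)}(t)\|^2 \leqslant \|I_N - (U^{(m)}(0))^\top U^{(m)}(0)\|^2 \exp(-4c_0 t)$, and then passing to the limit $m \to \infty$ using the convergence of $U^{(m)}$ to $U$ established in the proof of Theorem \ref{Existence of weak solutions}. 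A uniform-in-$m$ version of the bound $(U^{(m)})^\top \mathcal{H} U^{(m)} \leqslant -c_0 I_N$ along the Galerkin sequence needs to be verified, but this follows by repeating the argument of Lemma \ref{Bound of UHU} at the approximate level, since the key ingredients, namely the analytic representation of Lemma \ref{expression of Glaerkin solution} and the singular-value estimates of Lemma \ref{singular values of soltion}, both have finite-dimensional counterparts.
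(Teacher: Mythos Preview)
Your proposal is correct and follows essentially the same approach as the paper: differentiate the Frobenius norm of $P(t)=I_N-U(t)^\top U(t)$ using the flow equation, invoke Lemma~\ref{Bound of UHU} to bound $U^\top\mathcal{H}U\leqslant -c_0 I_N$, reduce to the scalar inequality $\frac{\mathrm{d}}{\mathrm{d}t}\|P\|^2\leqslant -4c_0\|P\|^2$, and apply Gr\"onwall. The paper computes $\frac{\mathrm{d}}{\mathrm{d}t}\|P\|^2=4\operatorname{tr}(PAP)$ directly rather than first isolating the matrix ODE $\dot P=AP+PA$, but by cyclicity these are the same quantity; your extra paragraph on justifying the product rule via the Galerkin approximants is a legitimate rigor concern that the paper simply does not address, though you should be aware that carrying Lemma~\ref{Bound of UHU} uniformly in $m$ requires checking that the constant $c_0^{(m)}$ built from $\mathcal P_m U_0$ stays bounded away from zero as $m\to\infty$.
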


\begin{proof}
	It follows from Lemma \ref{Bound of UHU} that
	\begin{equation*}
		\begin{aligned}
			\lambda_{\max}\left(U(t)^\top \mathcal{H}U(t) \right)\leqslant -c_0.
		\end{aligned}
	\end{equation*}
	Then
	$$
	\begin{aligned}
		\frac{\mathrm{d}    }{\mathrm{d}  t} \left\|I_N-U(t)^\top  U(t)\right\|^2=& 4 \operatorname{tr}\left[\left(I_N-U(t)^\top  U(t)\right) \cdot {U(t)^\top  \mathcal{H} U(t)}\cdot\left(I_N-U(t)^\top  U(t)\right)\right]\\
		\leqslant& -4c_0\operatorname{tr}\left[\left(I_N-U(t)^\top  U(t)\right)^2 \right] \\
		\leqslant& -4c_0\left\|I_N-U(t)^\top  U(t)\right\|^2,
	\end{aligned}
	$$
	we derive from Gr\"{o}nwall's inequality that
	\begin{equation*}
		\left\|I_N-U(t)^\top  U(t)\right\|^2\leqslant \left\|I_N-U_0^\top U_0\right\|^2  \exp(-4c_0t), \quad \forall t \geqslant 0.
	\end{equation*}
This completes the proof.
\end{proof}
\begin{remark}
	The above theorem implies that the solution of \eqref{operator of gradient flow based model} tends to become orthogonal as $t$ grows.
	Moreover, once the solution $U(t)$ becomes orthogonal at some time $t_0\geqslant0$, it remains orthogonal for all subsequent times, namely,
	$$
	U(t)^\top U(t)=I_N,\qquad t\geqslant t_0.
	$$
	This result shows a structural property of the quasi-Grassmannian model. The initial data are not required to satisfy the Stiefel constraint exactly; instead, the continuous flow drives the solution toward the Stiefel manifold automatically, without an external projection. This property indicates the robustness of the continuous model with respect to deviations from orthogonality.
	
\end{remark}

\subsection{Gradient behavior}
In this subsection, we prove that the quasi-Grassmannian gradient flow model (\ref{operator of gradient flow based model}) will converge to the equilibrium state.

Let $U(t)$ be the solution of \eqref{refined weak formulation} and define the bounded self-adjoint operator
\[
\mathcal Z(t):(L^2(\Omega))^N\to(L^2(\Omega))^N
\]
by
\[
\mathcal Z(t)V=U(t)(U(t)^\top V),
\qquad
\forall V\in(L^2(\Omega))^N .
\]
Since \(U(t)\in(\mathcal D(\mathcal H))^N\), $\operatorname{Ran}\mathcal Z(t)\subset(\mathcal D(\mathcal H))^N$.
\begin{lemma}
	\label{convergence of Z}
	Let \(U(t)\) be the solution of \eqref{operator of gradient flow based model}.
	Assume that \(U_0\in\tilde{\mathcal L}_0\). Then there exist a self-adjoint operator
	\(\mathcal Z_\infty: \big(L^2(\Omega)\big)^N \to \big(L^2(\Omega)\big)^N\) and constants \(K>0\) and \(\gamma>0\) such that
	\begin{equation*}
		\begin{aligned}
			\|(\mathcal Z(t)-\mathcal Z_\infty)V\|
			+
			\|\mathcal H(\mathcal Z(t)-\mathcal Z_\infty)V\|
			\leqslant
			Ke^{-\gamma t}\|V\|,
			\qquad
			\forall V\in (L^2(\Omega))^N .
		\end{aligned}
	\end{equation*}
	Moreover,
	$$
	\mathcal Z_\infty^2=\mathcal Z_\infty ,
	$$
	and the range of \(\mathcal Z_\infty\) is invariant under \(\mathcal H\). Consequently,
	$$
	\mathcal H\mathcal Z_\infty
	=
	\mathcal Z_\infty\mathcal H
	=
	\mathcal Z_\infty\mathcal H\mathcal Z_\infty.
	$$
\end{lemma}
\begin{proof}
	The proof is deferred to Appendix~\ref{proof of lemma: convergence of Z}.
\end{proof}

The following theorem states that the gradient vanishes as $t\rightarrow \infty$ and exponentially decreases over time, which implies that the solution of the quasi-Grassmannian gradient flow model (\ref{operator of gradient flow based model}) exponentially converges to the equilibrium state.
\begin{theorem}
	\label{nabla_G E(U(t)) = 0}
	Let \(U(t)\) be the solution of \eqref{operator of gradient flow based model}. Assume that
	\(U_0\in\tilde{\mathcal L}_0\). Then there exist constants \(K>0\) and
	\(\gamma>0\) such that
	$$
	\|\nabla_GE(U(t))\|
	\leqslant
	Ke^{-\gamma t},
	\qquad t\geqslant0 .
	$$
	Consequently,
	$$
	\lim_{t\to\infty}\|\nabla_GE(U(t))\|=0 .
	$$
\end{theorem}

\begin{proof}
	By Lemma~\ref{convergence of Z}, there exist a self-adjoint projection $\mathcal Z_\infty$ and constants $K>0$ and $\gamma>0$ such that
	\begin{equation*}
		\begin{aligned}
		\|(\mathcal Z(t)-\mathcal Z_\infty)V\|
			+
			\|\mathcal H(\mathcal Z(t)-\mathcal Z_\infty)V\|
			\leqslant
			Ke^{-\gamma t}\|V\|,
			\qquad
			\forall t\geqslant0,\quad
			\forall V\in(L^2(\Omega))^N .
		\end{aligned}
	\end{equation*}
	Moreover,
	$$
	\mathcal H\mathcal Z_\infty V
	=
	\mathcal Z_\infty\mathcal H V,
	\qquad
	\forall V\in(\mathcal D(\mathcal H))^N,
	$$
	and
	$$
	\mathcal H\mathcal Z_\infty V
	=
	\mathcal Z_\infty\mathcal H\mathcal Z_\infty V,
	\qquad
	\forall V\in(L^2(\Omega))^N.
	$$

	By Lemma~\ref{singular values of solution}, there exists $\beta>0$ such that
	$$
	U(t)^\top U(t)\geqslant \beta I_N,
	\qquad t\geqslant0 .
	$$
	Thus $U(t)^\top U(t)$ is invertible and
	$$
	\left\|
	U(t)\big(U(t)^\top U(t)\big)^{-1}
	\right\|^2
	=
	\operatorname{tr}\left(\big(U(t)^\top U(t)\big)^{-1}\right)
	\leqslant
	\frac{N}{\beta}.
	$$
	Since $	\mathcal Z(t)U(t)=U(t)\big(U(t)^\top U(t)\big)$, we have
	\begin{equation}\label{eq: Z(t) formulation}
			U(t)=\mathcal Z(t)U(t)\big(U(t)^\top U(t)\big)^{-1}.
	\end{equation}
	Hence
	\begin{equation*}
		\begin{aligned}
			\nabla_GE(U(t))
			&=
			\mathcal HU(t)-\mathcal Z(t)\mathcal HU(t)
			\\
			&=
			\left[
			\mathcal H\mathcal Z(t)
			-
			\mathcal Z(t)\mathcal H\mathcal Z(t)
			\right]
			U(t)\big(U(t)^\top U(t)\big)^{-1}.
		\end{aligned}
	\end{equation*}
	Using
	$$
	\mathcal H\mathcal Z_\infty V
	=
	\mathcal Z_\infty\mathcal H\mathcal Z_\infty V,
	\qquad
	\forall V\in(L^2(\Omega))^N,
	$$
	we obtain
	\begin{equation*}
		\begin{aligned}
			\nabla_GE(U(t))
			={}&
			\mathcal H(\mathcal Z(t)-\mathcal Z_\infty)
			U(t)\big(U(t)^\top U(t)\big)^{-1}
			\\
			&-
			(\mathcal Z(t)-\mathcal Z_\infty)
			\mathcal H\mathcal Z(t)
			U(t)\big(U(t)^\top U(t)\big)^{-1}
			\\
			&-
			\mathcal Z_\infty\mathcal H(\mathcal Z(t)-\mathcal Z_\infty)
			U(t)\big(U(t)^\top U(t)\big)^{-1}.
		\end{aligned}
	\end{equation*}
	The first term is bounded by $Ke^{-\gamma t}$. Since $\mathcal Z_\infty$ is a projection and hence bounded, the third term is also bounded by $Ke^{-\gamma t}$.
	
	We obtain from \eqref{eq: Z(t) formulation} and Lemma~\ref{convergence of Z} that
	\begin{equation*}
		\begin{aligned}
			\|\mathcal HU(t)\|
			&=
			\left\|
			\mathcal H\mathcal Z(t)
			U(t)\big(U(t)^\top U(t)\big)^{-1}
			\right\|
			\\
			&\leqslant
			\left\|
			\mathcal H(\mathcal Z(t)-\mathcal Z_\infty)
			U(t)\big(U(t)^\top U(t)\big)^{-1}
			\right\|
			+
			\left\|
			\mathcal H\mathcal Z_\infty
			U(t)\big(U(t)^\top U(t)\big)^{-1}
			\right\|
			\\
			&\leqslant C .
		\end{aligned}
	\end{equation*}
	Here $\mathcal H\mathcal Z_\infty$ is bounded on $(L^2(\Omega))^N$ since $\mathcal Z_\infty$ has finite-dimensional range contained in eigenspaces of $\mathcal H$. Taking $V=\mathcal HU(t)$ in Lemma~\ref{convergence of Z} gives
	$$
	\left\|
	(\mathcal Z(t)-\mathcal Z_\infty)\mathcal HU(t)
	\right\|
	\leqslant
	Ke^{-\gamma t}\|\mathcal HU(t)\|
	\leqslant
	Ke^{-\gamma t}.
	$$
	Thus
	$$
	\|\nabla_GE(U(t))\|
	\leqslant
	Ke^{-\gamma t},
	\qquad t\geqslant0 .
	$$
	Consequently,
	$$
	\lim_{t\to\infty}\|\nabla_GE(U(t))\|=0 .
	$$
\end{proof}

\subsection{Energy behavior}
In this subsection, we prove that the energy associated with the solution of the model (\ref{operator of gradient flow based model}) decays exponentially over time. Moreover, we demonstrate that the solution of the model (\ref{operator of gradient flow based model}) converges to the minimizer of the energy in (\ref{minimization in Grassmann}), which corresponds to the solution of the eigenvalue problem.

For simplicity, in the subsequent analysis we define
\begin{equation*}
	\begin{aligned}
		\mathcal{M}^N_{\leqslant} = \left\{ U \in \big(H_0^1(\Omega)\big)^N : 0<U^\top U \leqslant I_N \right\}.
	\end{aligned}
\end{equation*}
Unlike $\mathcal{M}^N$, which demands the column vectors to be orthonormal, $\mathcal{M}^N_{\leqslant}$ only requires that the columns of $U$ are linearly independent and $ \lambda_i(U(t)^\top U(t))$ to be bounded within $ (0,1]$.
Correspondingly, we define the quotient set \( \mathcal{G}^N_{\leqslant} \) of \( \mathcal{M}^N_{\leqslant} \) as
$$\mathcal{G}^N_{\leqslant} = \mathcal{M}^N_{\leqslant}/ \sim ,$$
referred to as the quasi-Grassmannian.
Lemma \ref{singular values of solution} indicates that if the initial value $U_0\in \mathcal{M}^N_{\leqslant}$, then the solution $U(t)\in  \mathcal{M}^N_{\leqslant}$ for all $t\geqslant0$, that is, $[U(t)]\in  \mathcal{G}^N_{\leqslant}, \ \forall t\geqslant0$.

First, we show that the energy $E(U(t))$ decreases monotonically over time.
\begin{theorem}
	\label{energy decreases}
	Let $U(t)$ be the solution of (\ref{operator of gradient flow based model}). If $U_0\in \mathcal{M}_{\leqslant}^N \bigcap \tilde{\mathcal{L}}_{0}$, then
	\begin{equation*}
		\begin{aligned}
			\frac{\mathrm{d} E(U(t))}{\mathrm{d} t}\leqslant 0, \quad \forall t\geqslant 0,
		\end{aligned}
	\end{equation*}
	that is, the solution of (\ref{operator of gradient flow based model}) satisfies energy dissipation.
\end{theorem}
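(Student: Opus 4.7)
The plan is to differentiate $E(U(t))$ directly using the flow equation and reduce the claim to an $N\times N$ matrix inequality that is controlled by the hypothesis $U^\top U\leqslant I_N$. The regularity needed for the pointwise computation is provided by Theorem~\ref{expression of solution(linear operator)}: since $U_0\in\tilde{\mathcal{L}}_0\subset(\mathcal{D}(\mathcal{H}))^N$ and $e^{-\mathcal{H}t}$ preserves $\mathcal{D}(\mathcal{H})$, the analytical expression guarantees $U(t)\in(\mathcal{D}(\mathcal{H}))^N$ for every $t\geqslant 0$, so $\mathcal{H}U(t)\in(L^2(\Omega))^N$ and all the identities below make sense as $L^2$ quantities.

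Using $\dot U = -\nabla_G E(U) = -\mathcal{H}U + U(U^\top\mathcal{H}U)$ together with the symmetry of $\mathcal{H}$ and of $U^\top\mathcal{H}U$, one computes
\begin{equation*}
	\frac{dE(U(t))}{dt} \;=\; \langle \mathcal{H}U,\dot U\rangle \;=\; -\|\mathcal{H}U\|^2 + \langle \mathcal{H}U,\,U(U^\top\mathcal{H}U)\rangle \;=\; -\|\mathcal{H}U\|^2 + \|U^\top\mathcal{H}U\|^2,
\end{equation*}
where the last step uses $\langle \mathcal{H}U,UA\rangle=\operatorname{tr}(A^\top U^\top\mathcal{H}U)$ with $A=U^\top\mathcal{H}U$ symmetric. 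It therefore suffices to prove the matrix-level inequality $\|U^\top\mathcal{H}U\|^2\leqslant\|\mathcal{H}U\|^2$ under the constraint $U(t)^\top U(t)\leqslant I_N$; this constraint holds for all $t\geqslant 0$ by Lemma~\ref{singular values of soltion}(2)--(4) because $U_0\in\mathcal{M}^N_{\leqslant}$ and $U_0^\top U_0>0$.

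For this inequality, I would introduce the polar decomposition $U=\tilde U S$ with $S:=(U^\top U)^{1/2}$ symmetric positive definite (positivity from Lemma~\ref{singular values of soltion}(4)) and $\tilde U:=US^{-1}\in(H_0^1(\Omega))^N$ with $L^2$-orthonormal columns. Setting $M:=\tilde U^\top\mathcal{H}\tilde U$ (symmetric) and $P:=\tilde U\tilde U^\top$ the orthogonal projection of $(L^2(\Omega))^N$ onto $\operatorname{range}(\tilde U)$, the decomposition $\mathcal{H}^2 = \mathcal{H}P\mathcal{H} + \mathcal{H}(I-P)\mathcal{H}$ yields
\begin{equation*}
	\|\mathcal{H}U\|^2 \;=\; \operatorname{tr}(S^2 M^2) \,+\, \operatorname{tr}(S^2 R), \qquad R\,:=\,\tilde U^\top\mathcal{H}(I-P)\mathcal{H}\tilde U \,=\, \bigl((I-P)\mathcal{H}\tilde U\bigr)^\top (I-P)\mathcal{H}\tilde U \,\geqslant\, 0,
\end{equation*}
while $\|U^\top\mathcal{H}U\|^2=\operatorname{tr}((SMS)^2)=\operatorname{tr}(S^2 MS^2M)$ by the cyclic property of the trace. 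The problem reduces to showing $\operatorname{tr}(S^2 M^2)\geqslant\operatorname{tr}(S^2 MS^2M)$; writing $T:=(I_N-S^2)^{1/2}$, which is well defined since $S^2\leqslant I_N$, the difference equals $\operatorname{tr}(S^2 M T^2 M)=\operatorname{tr}\bigl((SMT)(SMT)^\top\bigr)=\|SMT\|^2\geqslant 0$ after a cyclic rearrangement using the symmetry of $S,M,T$. Combined with $\operatorname{tr}(S^2 R)\geqslant 0$ (trace of a product of PSD matrices), this closes the estimate.

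The main subtlety is that, away from the Stiefel manifold, the comforting identity $\frac{dE}{dt}=-\|\nabla_G E(U)\|^2$ fails: the operator $UU^\top$ in the definition of $\nabla_G E$ is not an orthogonal projection when $U^\top U\neq I_N$, so an extra cross term $\|U^\top\mathcal{H}U\|^2$ appears and must be absorbed. The hypothesis $U_0\in\mathcal{M}^N_{\leqslant}$ is used exactly to furnish $S\leqslant I_N$, which is what turns the apparently indefinite expression $\operatorname{tr}(S^2 M^2)-\operatorname{tr}(S^2 MS^2M)$ into the manifestly nonnegative square $\|SMT\|^2$.
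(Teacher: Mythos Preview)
Your argument is correct, but it proceeds along a genuinely different line than the paper's. Both begin from $\tfrac{dE}{dt}=-\operatorname{tr}\bigl((\mathcal{H}U)^\top\nabla_G E(U)\bigr)$. You then reduce this to $-\|\mathcal{H}U\|^2+\|U^\top\mathcal{H}U\|^2$ and prove the matrix inequality $\|U^\top\mathcal{H}U\|^2\leqslant\|\mathcal{H}U\|^2$ by a polar decomposition of $U$ and an orthogonal splitting of $\mathcal{H}\tilde U$. (Incidentally, that last inequality has an even shorter proof: if $U^\top U\leqslant I_N$ then the rank--$N$ operator $UU^\top$ on $L^2(\Omega)$ satisfies $UU^\top\leqslant \mathcal{I}$, whence $\|U^\top V\|^2=\operatorname{tr}(V^\top UU^\top V)\leqslant\|V\|^2$; take $V=\mathcal{H}U$.) The paper instead introduces the \emph{extended} gradient $\nabla_{\tilde G}E(U)=\mathcal{H}UU^\top U-UU^\top\mathcal{H}U$, writes $\nabla_G E(U)=\nabla_{\tilde G}E(U)+\mathcal{H}U(I_N-U^\top U)$, and exploits the skew-symmetry of $U^\top\nabla_{\tilde G}E(U)$ against the symmetry of $U^\top\mathcal{H}U$ to obtain the exact identity
\[
\frac{dE(U(t))}{dt}\;=\;-\|\nabla_G E(U)\|^2\;-\;\operatorname{tr}\bigl[(U^\top\mathcal{H}U)^2(I_N-U^\top U)\bigr].
\]
Both terms on the right are nonpositive under $U^\top U\leqslant I_N$, giving the result immediately. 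The advantage of the paper's route is that this sharp two-term identity is reused verbatim in the subsequent corollary on exponential decay of the energy, where both terms are estimated separately from above. Your approach is more self-contained and avoids introducing $\nabla_{\tilde G}E$, but it only yields the sign of $\tfrac{dE}{dt}$ and would need to be reopened to recover the quantitative bound used later.
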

\begin{proof}
	By direct computation,
	\begin{equation*}
		\begin{aligned}
			&\|\nabla_GE(U)\|^2 - \operatorname{tr}\left(\nabla E(U)^\top \nabla_GE(U)\right)
			\\=&-\operatorname{tr}\left(\nabla E(U)^\top U U^\top \nabla_GE(U)\right)
			\\=&- \operatorname{tr}\left[ (U^\top\mathcal{H} U)^2 (I_N - U^\top U) \right]
			-\operatorname{tr}\left(\nabla E(U)^\top U U^\top \left( \nabla E(U)U^\top U - U U^\top \nabla E(U) \right)\right).
		\end{aligned}
	\end{equation*}
	Since $U^\top \left( \nabla E(U)U^\top U - U U^\top \nabla E(U) \right)$ is skew-symmetric and $\nabla E(U)^\top U$ is symmetric, the last term equals $0$.
	
	We see from Lemma \ref{singular values of solution} that $U(t)^\top U(t) \leqslant I_N, \forall t\geqslant 0$ with $U_0^\top U_0\leqslant I_N$. Then we have
	\begin{equation*}
		\begin{aligned}
			\frac{\mathrm{d} E(U(t))}{\mathrm{d} t}
			=&\frac{\delta E}{\delta U} \cdot \frac{\mathrm{d} U}{\mathrm{~d} t}=-\operatorname{tr}\left(\nabla E(U(t))^{\top} \nabla_G E\left(U(t)\right)\right)
			\\ =& - \operatorname{tr}\left[ (U^\top \mathcal{H} U)^2 (I_N - U^\top U) \right] -\|\nabla_GE(U)\|^2
			\\\leqslant&-\|\nabla_GE(U)\|^2\leqslant 0.
		\end{aligned}
	\end{equation*}
	This establishes the result.
\end{proof}

In the following analysis, we assume that the initial value satisfies
$$[U_0] \in B([V^{(*)}], \delta_1)$$
for some $\delta_1\in (0,1)$.
We further explain this initial condition in the following lemma. The proof is given in Appendix~\ref{proof of lemma: meaning of initial condition}.
\begin{lemma}
	\label{meaning of initial condition}
	If $U \in B([V^{(*)}], \delta)$ for any given $\delta\in (0,1)$, then $U$ contains the component of all the eigenfunctions corresponding to the $N$ smallest eigenvalues of $\mathcal{H}$.
\end{lemma}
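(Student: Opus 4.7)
The plan is to establish the invertibility of the $N\times N$ matrix $V^{(*)\top}U$. This is a natural sharp quantitative form of the statement: the $L^2$-projections $(v_i,u_j)$ of the columns of $U$ onto $\operatorname{span}(v_1,\ldots,v_N)$ span a full $N$-dimensional subspace of $\mathbb{R}^N$, so every $v_i$ ($1\leqslant i\leqslant N$) is recovered as the $L^2$-projection of some $\mathbb{R}$-linear combination of $u_1,\ldots,u_N$; in other words, $U$ genuinely ``contains a component'' along each low-mode eigenvector.

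First I would unwind the hypothesis. By the definition of $\operatorname{dist}_{H_0^1}$ and the strict inequality $\delta<1$, pick $\varepsilon>0$ with $\delta+\varepsilon<1$ and choose $Q\in\mathcal{O}^N$ such that $\|UQ-V^{(*)}\|_1<\delta+\varepsilon$. Set $R:=UQ-V^{(*)}$. Since the $L^2$-norm is dominated by the $H_0^1$-norm, $\|R\|\leqslant\|R\|_1<1$, and a direct calculation gives
\begin{equation*}
V^{(*)\top}UQ \;=\; V^{(*)\top}V^{(*)}+V^{(*)\top}R \;=\; I_N+V^{(*)\top}R.
\end{equation*}
Hence the invertibility of $V^{(*)\top}U$ reduces to showing that $I_N+V^{(*)\top}R$ is nonsingular, which by a Neumann-series argument follows from the spectral-norm estimate $\sigma_{\max}(V^{(*)\top}R)<1$.

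The heart of the proof is the inequality $\sigma_{\max}(V^{(*)\top}R)\leqslant\|R\|$. Applying the preliminary bound $\|U^\top V\|\leqslant\|U\|\|V\|$ would only give the Frobenius estimate $\sqrt{N}\,\|R\|$, which is too weak whenever $\delta\geqslant 1/\sqrt{N}$. Instead, exploiting that the columns of $V^{(*)}$ are orthonormal in $L^2(\Omega)$, Bessel's inequality yields, for every $a\in\mathbb{R}^N$,
\begin{equation*}
\bigl\|V^{(*)\top}Ra\bigr\|_{\mathbb{R}^N}^{2} \;=\; \sum_{i=1}^{N}\bigl|(v_i,Ra)\bigr|^{2} \;\leqslant\; \|Ra\|^{2} \;\leqslant\; \sigma_{\max}(R)^{2}\|a\|^{2} \;\leqslant\; \|R\|^{2}\|a\|^{2}.
\end{equation*}
Thus $\sigma_{\max}(V^{(*)\top}R)\leqslant\|R\|<1$, so $I_N+V^{(*)\top}R$ and hence $V^{(*)\top}U$ are invertible. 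From this, for each $i\in\{1,\ldots,N\}$ there exists $a^{(i)}\in\mathbb{R}^N$ with $V^{(*)\top}(Ua^{(i)})=e_i$, which translates into: the $L^2$-orthogonal projection of the function $Ua^{(i)}$ onto $\operatorname{span}(v_1,\ldots,v_N)$ equals $v_i$.

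The main obstacle is precisely obtaining this sharp spectral-norm bound $\sigma_{\max}(V^{(*)\top}R)\leqslant\|R\|$, since the generic estimate from the preliminary lemma is not strong enough to accommodate the full range $\delta\in(0,1)$. The Bessel-inequality step is the crucial ingredient: it uses the $L^2$-orthonormality of $V^{(*)}$ itself, not merely its membership in $(H_0^1(\Omega))^N$. Once this refined bound is in hand, the remainder is a routine Neumann-series perturbation.
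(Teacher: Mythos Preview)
Your argument is correct, and it yields a stronger conclusion than the paper's own proof. The paper also begins by choosing a representative $UQ$ with $\|UQ-V^{(*)}\|_1\leqslant\delta$, but then proceeds column by column: from $\|u_j-v_j\|\leqslant\|UQ-V^{(*)}\|_1\leqslant\delta$ and the expansion $u_j=\sum_k\alpha_{kj}v_k$, it extracts $|\alpha_{jj}-1|\leqslant\delta<1$, so the $j$-th diagonal entry of $V^{(*)\top}(UQ)$ lies in $[1-\delta,1+\delta]$ and is in particular nonzero. That already suffices for the informal statement that ``$U$ contains a component of every $v_j$'' (namely, $u_j$ itself carries a nonzero $v_j$-component). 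Your route instead bounds the spectral norm of the off-diagonal perturbation $V^{(*)\top}R$ via Bessel's inequality and invokes a Neumann-series argument to conclude that the full matrix $V^{(*)\top}U$ is invertible. This is a genuinely global statement: the projection of $\operatorname{span}(u_1,\ldots,u_N)$ onto $\operatorname{span}(v_1,\ldots,v_N)$ is surjective, not merely that each $v_j$ is hit by some column. The paper's approach is more elementary and gives explicit diagonal bounds; your approach is slightly more sophisticated but delivers the sharper invertibility conclusion, which is often what one actually needs downstream (e.g., to control $(V^{(*)\top}U)^{-1}$). Either proof is fully adequate for the lemma as stated.
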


The minimizer $[V^{(*)}]\in \mathcal{G}^N$ is the unique critical point of (\ref{minimization in Grassmann}). Thus $[V^{(*)}]$ is the unique point in $B\left(\left[V^{(*)}\right], \delta_1\right)\bigcap \mathcal{G}^N$.
By the definitions of the quasi-Grassmannian $\mathcal{G}^N_{\leqslant}$ and the admissible set $\mathcal{L}_{c_0}$, this uniqueness extends to the region
$$B\left(\left[V^{(*)}\right], \delta_1\right)\bigcap \mathcal{G}^N_{\leqslant}\bigcap \mathcal{L}_{c_0}. $$
Thus $[V^{(*)}]$ is also the unique critical point in this set.

For a fixed constant $\delta_2\in \left(0, \delta_1\right) $, we define
\begin{equation*}
	E_0 = \inf\left\{ E(\tilde{U}): [\tilde{U}] \in \overline{ B\left(\left[V^{(*)}\right], \delta_1\right) \backslash B\left(\left[V^{(*)}\right], \delta_2\right)}\bigcap \mathcal{G}^N_{\leqslant}\bigcap\mathcal{L}_{c_0}   \right\}.
\end{equation*}
We present the convergence theorem as follows.
\begin{theorem}
	\label{convergence}
	Let $U(t)$ be the solution of (\ref{operator of gradient flow based model}). If $U_0 \in \mathcal{M}^N_{\leqslant}\bigcap\tilde{\mathcal{L}}_{0} $ and
	$$E(U_0) \leqslant \frac{E_0+E(V^{(*)})}{2} \stackrel{\Delta }{=} E_1,$$
	then
	\begin{equation*}
		\begin{aligned}
			&\lim\limits_{t\rightarrow \infty}E(U(t)) = E(V^{(*)}),
			\\& \lim\limits_{t\rightarrow \infty}\operatorname{dist}_1\left([U(t)], [V^{(*)}]\right) =0.
		\end{aligned}
	\end{equation*}
\end{theorem}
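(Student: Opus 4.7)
The plan is to leverage three ingredients already established in the paper—energy monotonicity (Theorem~\ref{energy decreases}), exponential asymptotic orthogonality $U(t)^{\top}U(t)\to I_N$ (Theorem~\ref{limiting solution in stiefel manifold}), and the exponential vanishing of the projected gradient $\|\nabla_G E(U(t))\|\to 0$ (Theorem~\ref{nabla_G E(U(t))_F = 0})—together with the stated uniqueness of the critical point in $B([V^{(*)}],\delta_1)\cap\mathcal{G}^N_{\leqslant}\cap\mathcal{L}_{c_0}$. The overall route is: confine the trajectory $[U(t)]$ to the small ball $B([V^{(*)}],\delta_2)$; extract a subsequential $H_0^1$-weak limit of $U(t)$, identify it as an element of $[V^{(*)}]$ via the critical-point equation; upgrade to strong $H_0^1$ convergence by a G\aa rding argument; and conclude by a standard subsequence-uniqueness argument.

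\textbf{Step 1 (Confinement).} Lemma~\ref{singular values of soltion} and Lemma~\ref{Bound of UHU} first give $[U(t)]\in\mathcal{G}^N_{\leqslant}\cap\mathcal{L}_{c_0}$ for all $t\geqslant 0$. The strict local minimality of $[V^{(*)}]$ used to define $\delta_1$ implies $E(V^{(*)})<E_0$, so $E_1<E_0$. Since $[U_0]\in B([V^{(*)}],\delta_1)\cap\mathcal{G}^N_{\leqslant}\cap\mathcal{L}_{c_0}$ with $E(U_0)\leqslant E_1<E_0$, the definition of $E_0$ forbids $[U_0]$ from lying in the shell $\overline{B([V^{(*)}],\delta_1)\setminus B([V^{(*)}],\delta_2)}$, hence $[U_0]\in B([V^{(*)}],\delta_2)$. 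The analytic representation in Theorem~\ref{expression of solution(linear operator)}, combined with strong continuity of $\{\exp(-\mathcal{H}t)\}_{t\geqslant 0}$, makes $t\mapsto [U(t)]$ continuous in the $H_0^1$-distance. If the trajectory ever left $B([V^{(*)}],\delta_2)$, by continuity it would first reach the shell, where the energy is at least $E_0$, contradicting the energy-diminishing property $E(U(t))\leqslant E(U_0)\leqslant E_1<E_0$ from Theorem~\ref{energy decreases}. Therefore $[U(t)]\in B([V^{(*)}],\delta_2)$ for all $t\geqslant 0$.

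\textbf{Step 2 (Subsequential limit and energy).} The energy is monotonically decreasing and, thanks to G\aa rding, bounded below on $\mathcal{L}_{c_0}$, so $E(U(t))\to E_\infty$ for some $E_\infty\in\mathbb{R}$. The confinement also bounds $\{U(t)\}$ uniformly in $(H_0^1(\Omega))^N$. For any $t_n\to\infty$, pass to a subsequence with $U(t_{n_k})\rightharpoonup U_\infty$ weakly in $(H_0^1(\Omega))^N$ and strongly in $(L^2(\Omega))^N$ by Rellich--Kondrachov; by Theorem~\ref{limiting solution in stiefel manifold}, $U_\infty^{\top}U_\infty=I_N$. Lemma~\ref{Bound of UHU} bounds $\{U(t_{n_k})^{\top}\mathcal{H}U(t_{n_k})\}$, so a further subsequence converges to some matrix $\Lambda_\infty$. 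Using strong $L^2$ convergence of $U(t_{n_k})$ and Theorem~\ref{nabla_G E(U(t))_F = 0}, the identity $\mathcal{H}U(t_{n_k})=U(t_{n_k})(U(t_{n_k})^{\top}\mathcal{H}U(t_{n_k}))+\nabla_G E(U(t_{n_k}))$ yields $\mathcal{H}U(t_{n_k})\to U_\infty\Lambda_\infty$ in $(L^2(\Omega))^N$, and matching with the weak $H^{-1}$ limit $\mathcal{H}U_\infty$ gives $\mathcal{H}U_\infty=U_\infty\Lambda_\infty$. Hence $[U_\infty]$ is a critical point of $E$ on $\mathcal{G}^N$; by the stated uniqueness in $B([V^{(*)}],\delta_1)\cap\mathcal{G}^N$, $[U_\infty]=[V^{(*)}]$, and $E(U(t_{n_k}))=\tfrac{1}{2}\operatorname{tr}(U(t_{n_k})^{\top}\mathcal{H}U(t_{n_k}))\to\tfrac{1}{2}\operatorname{tr}\Lambda_\infty=E(U_\infty)=E(V^{(*)})$. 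Therefore $E_\infty=E(V^{(*)})$, settling the first limit in the theorem. For strong convergence, I invoke G\aa rding on the difference:
\begin{equation*}
\langle\mathcal{H}(U(t_{n_k})-U_\infty),U(t_{n_k})-U_\infty\rangle \geqslant c_1\|\nabla(U(t_{n_k})-U_\infty)\|^2 - c_2\|U(t_{n_k})-U_\infty\|^2.
\end{equation*}
The $L^2$ term on the right vanishes by Rellich, while the left-hand side expands and converges to $\langle\mathcal{H}U_\infty,U_\infty\rangle-2\langle\mathcal{H}U_\infty,U_\infty\rangle+\langle\mathcal{H}U_\infty,U_\infty\rangle=0$, using $\langle\mathcal{H}U(t_{n_k}),U(t_{n_k})\rangle\to 2E_\infty=\langle\mathcal{H}U_\infty,U_\infty\rangle$ from the previous paragraph and $\langle\mathcal{H}U(t_{n_k}),U_\infty\rangle\to\langle\mathcal{H}U_\infty,U_\infty\rangle$ from weak convergence. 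Thus $U(t_{n_k})\to U_\infty$ strongly in $(H_0^1(\Omega))^N$; since every subsequential limit lies in $[V^{(*)}]$, a contradiction argument produces $\operatorname{dist}_{H_0^1}([U(t)],[V^{(*)}])\to 0$.

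\textbf{Main obstacle.} The most delicate point is the joint passage to the limit in the nonlinear expression $U(t_{n_k})(U(t_{n_k})^{\top}\mathcal{H}U(t_{n_k}))$, which simultaneously requires weak $(H_0^1)^N$ convergence, Rellich-provided strong $(L^2)^N$ compactness, and the a priori matrix bound $U(t)^{\top}\mathcal{H}U(t)\leqslant -c_0 I_N$ (Lemma~\ref{Bound of UHU}) to extract $\Lambda_\infty$ and identify $U_\infty$ as a critical point. A secondary subtlety is ensuring $E(V^{(*)})<E_0$, which follows from the strict local minimality of $[V^{(*)}]$ implied by the Hessian positivity already employed in the paper. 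Everything else reduces to continuity, energy monotonicity, and the shell-barrier argument of Step~1.
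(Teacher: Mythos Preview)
Your proof is correct and follows the same overall architecture as the paper's: confine $[U(t)]$ to $B([V^{(*)}],\delta_2)\cap\mathcal{G}^N_{\leqslant}\cap\mathcal{L}_{c_0}$ via the shell/energy-barrier argument and energy monotonicity, extract a subsequential limit, identify it as a critical point using $\|\nabla_G E(U(t))\|\to 0$, invoke uniqueness to obtain $[V^{(*)}]$, and finish with monotonicity of $E(U(t))$ and a subsequence contradiction for the distance.

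Where you differ is in the compactness step. The paper simply declares the set $\mathcal{S}$ compact in $(H_0^1(\Omega))^N$, extracts a strongly convergent subsequence $U(\tau_k)\to\hat U$, and passes $\|\nabla_G E(\cdot)\|$ through the limit by continuity. You instead use weak $(H_0^1)^N$ compactness plus Rellich--Kondrachov for strong $L^2$ convergence, identify the limiting eigenvalue relation via the decomposition $\mathcal{H}U=U\,(U^\top\mathcal{H}U)+\nabla_G E(U)$ (with the matrix factor controlled by Lemma~\ref{Bound of UHU}), and only then upgrade to strong $H_0^1$ convergence through the G\aa rding inequality and the already-established energy convergence. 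Your route is longer but more transparently justified in the infinite-dimensional setting, where strong $H_0^1$ compactness of bounded sets and $L^2$-continuity of $U\mapsto\|\nabla_G E(U)\|$ are not automatic; the paper's proof leaves those points unargued.
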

\begin{proof}
	Define
	$$
	\mathcal S=
	\left\{
		U\in\big(H_0^1(\Omega)\big)^N:
		[U]\in B\left(\left[V^{(*)}\right],\delta_2\right)
		\bigcap\mathcal G^N_{\leqslant}
		\bigcap\mathcal L_{c_0}
		\bigcap\mathfrak L_{E_1}
		\right\},
	$$
	where
	$$
	\mathfrak L_{E_1}
	=
	\left\{
		[U]\subset\big(H_0^1(\Omega)\big)^N:
		E(U)\leqslant E_1
		\right\}.
	$$
	Since $E(V^{(*)})<E_0$, we have $E_1<E_0$. By the assumption $[U_0]\in B([V^{(*)}],\delta_1)$ and the definition of $E_0$, the bound $E(U_0)\leqslant E_1$ implies
	$$
	[U_0]\in B([V^{(*)}],\delta_2).
	$$

	By Lemma~\ref{singular values of solution} and Lemma~\ref{Bound of UHU}, we obtain
	$$
	U(t)\in\mathcal M^N_{\leqslant},\qquad U(t)^\top\mathcal HU(t)\leqslant -c_0I_N,\qquad t\geqslant0.
	$$
	Combining this with Theorem~\ref{energy decreases}, we have
	$$
	E(U(t))\leqslant E(U_0)\leqslant E_1,\qquad t\geqslant0.
	$$
	Thus, by the definition of $E_0$,
	$$
	[U(t)]\in B([V^{(*)}],\delta_2),\qquad t\geqslant0.
	$$
	Consequently,
	$$
	U(t)\in\mathcal S,\qquad t\geqslant0.
	$$
	
	Set
	$$
	V(t)=U(t)\big(U(t)^\top U(t)\big)^{-1}.
	$$
	By Lemma~\ref{singular values of solution}, there exists $\beta>0$ such that
	$$
	U(t)^\top U(t)\geqslant \beta I_N,\qquad t\geqslant0.
	$$
	Hence
	$$
	\|V(t)\|^2=\operatorname{tr}\left((U(t)^\top U(t))^{-1}\right)\leqslant \frac{N}{\beta},\qquad t\geqslant0.
	$$
	Due to $U(t)=\mathcal Z(t)V(t)$, we obtain from Lemma~\ref{convergence of Z} that
	$$
	\|U(t)-\mathcal Z_\infty V(t)\|+\|\mathcal H(U(t)-\mathcal Z_\infty V(t))\|\leqslant Ce^{-\gamma t}.
	$$
	The range of $\mathcal Z_\infty$ is finite-dimensional and contained in $(\mathcal D(\mathcal H))^N$. Since $\{V(t)\}_{t\geqslant0}$ is bounded in $(L^2(\Omega))^N$, the set $\{\mathcal Z_\infty V(t):t\geqslant0\}$
	is relatively compact in $(\mathcal D(\mathcal H))^N$. Hence, for every sequence $\tau_k\to\infty$, there exist a subsequence, still denoted by $\tau_k$, and $\hat U\in(\mathcal D(\mathcal H))^N$ such that
	$$
	\mathcal Z_\infty V(\tau_k)\to\hat U
	\quad
	\text{in }(\mathcal D(\mathcal H))^N.
	$$
	Consequently,
	$$
	U(\tau_k)\to\hat U
	\quad
	\text{in }(\mathcal D(\mathcal H))^N,
	$$
	which together with Lemma \ref{lem:compact embedding} leads to
	$$
	U(\tau_k)\to\hat U
	\quad
	\text{in }\big(H_0^1(\Omega)\big)^N.
	$$
	Since $U(\tau_k)\in\mathcal S$ and $\mathcal S$ is closed in $\big(H_0^1(\Omega)\big)^N$, we have $\hat U\in\mathcal S$.
	
	The map
	$$
	U\mapsto \nabla_GE(U)=\mathcal HU-U(U^\top\mathcal HU)
	$$
	is continuous from bounded subsets of $(\mathcal D(\mathcal H))^N$, equipped with the graph norm, into $(L^2(\Omega))^N$. Therefore
	$$
	\nabla_GE(U(\tau_k))
	\to
	\nabla_GE(\hat U)
	\quad
	\text{in }(L^2(\Omega))^N.
	$$
	By Theorem~\ref{nabla_G E(U(t)) = 0},
	$$
	\|\nabla_GE(U(\tau_k))\|
	\leqslant
	Ke^{-\gamma\tau_k}
	\to0.
	$$
	Thus
	$$
	\nabla_GE(\hat U)=0.
	$$
	Note that $\hat U\in\mathcal S\subset B([V^{(*)}],\delta_1)\cap\mathcal G^N_{\leqslant}\cap\mathcal L_{c_0}$ and $[V^{(*)}]$ is the unique critical point in this set, we arrive at
	$$
	[\hat U]=[V^{(*)}].
	$$
	
	The energy $E(U(t))$ is nonincreasing and bounded from below by $E(V^{(*)})$. Hence $\lim_{t\to\infty}E(U(t))$ exists. Applying the preceding argument to any sequence $\tau_k\to\infty$ gives, after taking a subsequence,
	$$
	E(U(\tau_k))\to E(\hat U)=E(V^{(*)}).
	$$
	Therefore
	$$
	\lim_{t\to\infty}E(U(t))=E(V^{(*)}).
	$$
	
It remains to prove convergence in the $H_0^1$-Grassmannian distance. We argue by contradiction. Then there exist $\varepsilon>0$ and a sequence $\tau_p\to\infty$ such that
	$$
	\operatorname{dist}_1([U(\tau_p)],[V^{(*)}])\geqslant\varepsilon.
	$$
	By the compactness argument above, there exist a subsequence $\tau_{p_q}\to\infty$ and $\bar U\in(\mathcal D(\mathcal H))^N$ such that
	$$
	U(\tau_{p_q})\to\bar U
	\quad
	\text{in }(\mathcal D(\mathcal H))^N.
	$$
	In particular,
	$$
	\lim_{q\to\infty}\|U(\tau_{p_q})-\bar U\|_1=0.
	$$
	Since $U(\tau_{p_q})\in\mathcal S$ and $\mathcal S$ is closed in $\big(H_0^1(\Omega)\big)^N$, we have $\bar U\in\mathcal S$. As above,
	$$
	\nabla_GE(\bar U)=0.
	$$
	By uniqueness of the critical point in $\mathcal S$,
	$$
	[\bar U]=[V^{(*)}].
	$$
	Thus
	$$
	\operatorname{dist}_1([U(\tau_{p_q})],[V^{(*)}])\to0,
	$$
	which contradicts the choice of $\tau_p$. Hence
	$$
	\lim_{t\to\infty}
	\operatorname{dist}_1([U(t)],[V^{(*)}])
	=
	0.
	$$
\end{proof}

The theorem shows that the equivalence class \([U(t)]\) converges to \([V^{(*)}]\) in the \(H_0^1\)-Grassmannian distance and that the energy converges to the minimum value \(E(V^{(*)})\).
Furthermore, we can show that the solution of \eqref{refined weak formulation} converges exponentially to the minimizer as time progresses.
\begin{corollary}
	Under the same hypotheses as in Theorem \ref{convergence},
	there hold
	\begin{equation*}
		\begin{aligned}
			& \operatorname{dist}\left([U(t)], [V^{(*)}]\right) \leqslant \frac{K}{\gamma} \exp (-\gamma t),
			\\&	E(U(t)) - E(V^{(*)}) \leqslant \frac{|\lambda_1|^2}{2c_0} \left\| I_N-U_0^\top U_0 \right\|\exp(-2c_0t) +\frac{K^2}{2\gamma} \exp (-2\gamma t).
		\end{aligned}
	\end{equation*}
	where $\gamma>0$ and $K>0$ are constants mentioned in Theorem \ref{nabla_G E(U(t)) = 0}.
\end{corollary}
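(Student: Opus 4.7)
The plan is to derive both estimates by combining the exponential gradient decay from Theorem~\ref{nabla_G E(U(t))_F = 0} with the asymptotic orthogonality from Theorem~\ref{limiting solution in stiefel manifold} and the dissipation identity already established in the proof of Theorem~\ref{energy decreases}.

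For the $L^2$ distance bound, I would view $U(t)$ as a trajectory in $\bigl(L^2(\Omega)\bigr)^N$. The assumption $U_0\in\tilde{\mathcal{L}}_0\subset(\mathcal{D}(\mathcal{H}))^N$, the analytical representation in Theorem~\ref{expression of solution(linear operator)}, and the boundedness from Lemma~\ref{negative condition of initial value} together force $U(t)\in(\mathcal{D}(\mathcal{H}))^N$, so $\nabla_GE(U(t))=\mathcal{H}U(t)-U(t)U(t)^\top\mathcal{H}U(t)$ lives in $\bigl(L^2(\Omega)\bigr)^N$ and the equation $\tfrac{\mathrm{d}U}{\mathrm{d}t}=-\nabla_GE(U)$ holds pointwise there. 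Theorem~\ref{nabla_G E(U(t))_F = 0} then gives $\bigl\|\tfrac{\mathrm{d}U}{\mathrm{d}t}(t)\bigr\|\le K\exp(-\gamma t)$, and integrating from $t_1$ to $t_2$ yields
\begin{equation*}
\|U(t_2)-U(t_1)\|\le\int_{t_1}^{t_2}K\exp(-\gamma s)\,\mathrm{d}s\le\frac{K}{\gamma}\exp(-\gamma t_1).
\end{equation*}
Hence $\{U(t)\}$ is Cauchy in $\bigl(L^2(\Omega)\bigr)^N$ and converges strongly to some $U_\infty$; since Theorem~\ref{convergence} also gives $\operatorname{dist}_{H_0^1}([U(t)],[V^{(*)}])\to 0$, the class of the limit satisfies $[U_\infty]=[V^{(*)}]$, so $U_\infty=V^{(*)}Q_\infty$ for some $Q_\infty\in\mathcal{O}^N$. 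Sending $t_2\to\infty$ gives $\|U(t)-U_\infty\|\le(K/\gamma)\exp(-\gamma t)$, and choosing $Q=Q_\infty^\top$ in the infimum defining $\operatorname{dist}_{L^2}$, combined with the orthogonal invariance of the Frobenius norm, closes out the first inequality.

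For the energy bound, I would integrate the dissipation identity
\begin{equation*}
\frac{\mathrm{d}E(U(t))}{\mathrm{d}t}=-\operatorname{tr}\bigl[(U^\top\mathcal{H}U)^2(I_N-U^\top U)\bigr]-\|\nabla_GE(U)\|^2
\end{equation*}
from $t$ to $\infty$, invoking $\lim_{s\to\infty}E(U(s))=E(V^{(*)})$ from Theorem~\ref{convergence} so that both (nonnegative) integrands contribute to $E(U(t))-E(V^{(*)})$. For the first integrand, Lemma~\ref{Bound of UHU} together with a Rayleigh-quotient argument (exploiting $U^\top U\le I_N$) pins the eigenvalues of $U^\top\mathcal{H}U$ in the interval $[\lambda_1,-c_0]$, whence $(U^\top\mathcal{H}U)^2\le|\lambda_1|^2 I_N$; combined with $I_N-U^\top U\succeq 0$ and the trace-product inequality for PSD matrices, the integrand is bounded by $|\lambda_1|^2\|I_N-U^\top U\|$, and the asymptotic orthogonality bound $\|I_N-U(s)^\top U(s)\|\le\|I_N-U_0^\top U_0\|\exp(-2c_0 s)$ from Theorem~\ref{limiting solution in stiefel manifold} produces, after integration, the $\tfrac{|\lambda_1|^2}{2c_0}\|I_N-U_0^\top U_0\|\exp(-2c_0 t)$ term. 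For the second integrand, the pointwise bound $\|\nabla_GE(U(s))\|^2\le K^2\exp(-2\gamma s)$ from Theorem~\ref{nabla_G E(U(t))_F = 0} integrates to a term dominated by $\tfrac{K^2}{\gamma^2}\exp(-2\gamma t)$. Summing the two contributions yields the stated energy estimate.

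The main obstacle is the identification step in the first paragraph: a priori, quotient convergence $\operatorname{dist}_{H_0^1}([U(t)],[V^{(*)}])\to 0$ does not automatically lift to strong $L^2$ convergence of $U(t)$ to a specific representative of $[V^{(*)}]$, so it must be reconciled with the ambient Cauchy property to single out the orthogonal factor $Q_\infty$. A secondary technical point is bookkeeping the constants in the trace-PSD product estimate so that they align cleanly with those in the stated bound, but this is routine once the spectral containment $[\lambda_1,-c_0]$ has been established.
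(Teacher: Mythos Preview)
Your proposal is correct and follows essentially the same route as the paper: a Cauchy-in-$L^2$ argument using the exponential gradient bound to control $\|U(t_2)-U(t_1)\|$ and hence the distance to the limit, followed by integration of the dissipation identity $\frac{\mathrm{d}E}{\mathrm{d}t}=-\operatorname{tr}[(U^\top\mathcal{H}U)^2(I_N-U^\top U)]-\|\nabla_GE(U)\|^2$ from $t$ to $\infty$, bounding the first term via the spectral estimate $\sigma_{\max}(U^\top\mathcal{H}U)\le|\lambda_1|$ and Theorem~\ref{limiting solution in stiefel manifold}, and the second via Theorem~\ref{nabla_G E(U(t))_F = 0}. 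Your treatment of the identification step (reconciling the ambient $L^2$ limit with the quotient $H_0^1$ convergence to pin down $Q_\infty$) is in fact more explicit than the paper's, which simply asserts $\lim_{t\to\infty}U(t)\in[V^{(*)}]$.
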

\begin{proof}
	It follows that for all $s_2>s_1 \geqslant 0$,
	$$
	\begin{aligned}
		\left\|U\left(s_2\right)-U\left(s_1\right)\right\|
		\leqslant &\int_{s_1}^{s_2} 	\|\nabla_G E(U)\| \mathrm{d} t
		\\ \leqslant &\int_{s_1}^{s_2} K \exp (-\gamma t) \mathrm{d} t 
		=\frac{K}{\gamma}\left[\exp \left(-\gamma s_1\right)-\exp \left(-\gamma s_2\right)\right].
	\end{aligned}
	$$
	
	If $t$ is sufficiently large and $s_2, s_1 \geqslant t$, then $\lim\limits_{t\rightarrow \infty}\left\|U\left(s_2\right)-U\left(s_1\right)\right\| =0$. It is clear to see $\lim\limits _{t \rightarrow \infty} U(t)$ exists in $\big(L^2(\Omega)\big)^N$ and is an equilibrium point of (\ref{operator of gradient flow based model}). 
	
	Letting $s_2$ tend to infinity and setting $s_1=t$, we obtain
	\begin{equation*}
		\left\|U(t)-\lim\limits _{t \rightarrow \infty} U(t)\right\| \leqslant \frac{K}{\gamma} \exp (-\gamma t).
	\end{equation*}
	Thus, 
	\begin{equation*}
		\begin{aligned}
			\operatorname{dist}\left([U(t)], [V^{(*)}]\right) =& \inf\limits_{Q\in \mathcal{O}^N} \left\|U(t)Q-V^{(*)}\right\|
			\\ \leqslant&\left\|U(t)-\lim\limits _{t \rightarrow \infty} U(t)\right\| \leqslant \frac{K}{\gamma} \exp (-\gamma t).
		\end{aligned}
	\end{equation*}
	
	Therefore,
	\begin{equation*}
		\begin{aligned}
			\operatorname{tr}\left[ (U^\top \mathcal{H} U)^2 (  I_N - U^\top U) \right]
			\leqslant & \|U^\top \mathcal{H} U\|^2 \|   I_N- U^\top U\|
			\\\leqslant& |\lambda_1|^2 \|   I_N- U^\top U\|,
		\end{aligned}
	\end{equation*}
	and
	\begin{equation*}
		\begin{aligned}
			\frac{\mathrm{d} E(U(t))}{\mathrm{d} t}
			=& - \operatorname{tr}\left[ (U^\top \mathcal{H} U)^2 (  I_N - U^\top U) \right] -\|\nabla_GE(U)\|^2
			\\\geqslant& -|\lambda_1|^2\|   I_N- U^\top U\|  -\|\nabla_GE(U)\|^2.
		\end{aligned}
	\end{equation*}
	This yields
	\begin{equation*}
		\begin{aligned}
			E(U(t)) - \lim\limits_{t\rightarrow\infty}E(U ) \leqslant \int_{t}^{\infty} |\lambda_1|^2 \|   I_N- U^\top U\|+ \|\nabla_GE(U(s))\|^2   \mathrm{d} s ,
		\end{aligned}
	\end{equation*}
	that is,
	\begin{equation*}
		\begin{aligned}
			E(U(t)) - E(V^{(*)} ) \leqslant \frac{|\lambda_1|^2}{2c_0} \left\| I_N-U_0^\top U_0 \right\|\exp(-2c_0t) +\frac{K^2}{2\gamma} \exp (-2\gamma t),
		\end{aligned}
	\end{equation*}
	which means the energy error decays exponentially as $t$ increases. This concludes the proof.
\end{proof}

Under the assumption 
\begin{equation}
	\label{expression of Hess}
	\begin{aligned}
		&	\left\langle \nabla^2 E(U) \nabla_G E(U) - \nabla_G E(U)  U^\top \nabla E(U)  , \nabla_G  E(U)\right\rangle 
		\\&\qquad\qquad \qquad\qquad\qquad \qquad \qquad \geqslant \mu \| \nabla_G  E(U) \|^2 ,
		\quad \forall [U] \in B\left(\left[V^{(*)}\right], \delta_2\right)
	\end{aligned}
\end{equation}
for some $\mu >0$, we can provide a more precise estimate of the exponential decay rate of $\|\nabla_GE(U)\|$.
\begin{remark}
By \cite{dai2017conjugate,edelman1998geometry}, the tangent space on $\mathcal{G}^N$ is
	$$
	\mathcal{T}_{[U]} \mathcal{G}^N=\left\{W \in\left(H_0^1\left(\Omega\right)\right)^N: W^{\top} U=0\right\},
	$$
	and the Hessian of $E(U)$ on $\mathcal{G}^N$ is
	$$
	\begin{aligned}
		\operatorname{Hess}_G E(U)[V, W]=\left\langle \nabla^2 E(U) W- W U^{\top} \nabla E(U), V \right\rangle, \quad \forall V, W \in \mathcal{T}_{[U]} \mathcal{G}^N. 
	\end{aligned}
	$$
	Moreover, the condition $\operatorname{Hess}_G E(U)[W, W] \geqslant \mu \|W\|^2, \quad \forall W \in \mathcal{T}_{[U]} \mathcal{G}^N $, has already been used in \cite{dai2017conjugate,dai2020,schneider2009direct}.
	
		It is clear that $\nabla_{G}E(U)^\top U  = 0, \forall U \in \mathcal{M}^N$; that is, $\nabla_{G}E(U)\in \mathcal{T}_{U} \mathcal{G}^N $.
	Thus (\ref{expression of Hess}) can be viewed as an extension of the Hessian to the space $\big(H_0^1(\Omega)\big)^N$, denoted by $\operatorname{Hess}_{\tilde{G}} E(U)\left[\cdot, \cdot \right]$. Indeed, (\ref{expression of Hess}) is equivalent to
	\begin{equation*}
		\begin{aligned}
			&\operatorname{Hess}_{\tilde{G}} E(U)\left[\nabla_{G}  E(U) , \nabla_{G}  E(U) \right]
			\\	=&\left\langle \nabla^2 E(U) \nabla_{G} E(U) 
			- \nabla_{G} E(U)     U^\top \nabla E(U),\nabla_{G}  E(U)   \right\rangle
			\\ \geqslant &\mu \| \nabla_{G}  E(U) \|^2.
		\end{aligned}
	\end{equation*}
\end{remark}
\begin{corollary}	
	\label{convergence rate}
	Let $U(t)$ be the solution of (\ref{operator of gradient flow based model}). Suppose $U_0 \in \tilde{\mathcal{L}}_{0} $ and $U_0^\top U_0 \leqslant I_N$. If the assumption (\ref{expression of Hess}) holds, then
	\begin{equation*}
		\begin{aligned}
			&	\left\|\nabla_GE(U(t))\right\|^2
			\\ \leqslant& \left\{
			\begin{aligned}
				&	\left\|\nabla_GE(U_0)\right\|^2 e^{-2\mu t} + 2 C^* \|I_N -U_0^\top U_0\| t e^{-2\mu t} \qquad\qquad\qquad\qquad \  \mu =c_0,
				\\	& 	\left\|\nabla_GE(U_0)\right\|^2 e^{-2\mu t} + \frac{C^* \|I_N -U_0^\top U_0\|}{\mu -c_0} \left( e^{\left(2\mu-2c_0\right)t}  -1\right) e^{-2\mu t} \qquad \mu\neq c_0.
			\end{aligned}\right.
		\end{aligned}
	\end{equation*}
\end{corollary}
\begin{proof}
	The proof is deferred to Appendix~\ref{proof of corollary: convergence rate}.
\end{proof}

Building on Corollary \ref{convergence rate}, we can derive a more precise estimate for the exponential decay rate of \(\operatorname{dist}([U(t)], [V^{(*)}])\) and $E(U(t))$ discussed above. To avoid redundancy, we omit unnecessary details here.

\section{Concluding remarks}
In this paper, we have proposed and analyzed a quasi-Grassmannian gradient flow model for eigenvalue problems of linear operators in an infinite-dimensional setting. We have established the well-posedness of the model and derived an analytic representation of its solution. We have also proved that the solution becomes asymptotically orthogonal and approaches the Stiefel manifold even when the initial data are non-orthogonal.

For the numerical solution of the quasi-Grassmannian gradient flow model, we propose and analyze in our related work \cite{wang2026quasi} a quasi-orthogonal approximation algorithm that preserves the structure of the model. We also plan to extend the framework to nonlinear eigenvalue problems and to validate the approach through numerical experiments. These efforts aim to further enhance the practical applicability of the model and broaden its use in operator-related eigenvalue problems.

\section*{Acknowledgments}
The authors thank Professor Xiaoying Dai and Professor Bin Yang for insightful discussions, and Professor Tianyang Chu and Dr. Yan Li for their careful review of the manuscript. The authors also thank the referee for the constructive comments and helpful suggestions, which have improved the presentation of this paper.

\appendix
\section{Detailed proofs}\label{app:proofs}
\subsection{Proof of Lemma~\ref{lemma: UHU<0}}\label{proof of lemma: UHU<0}
\begin{proof}
	Let
	$$
	A_m(t)=\left(U^{(m)}(t)\right)^\top\mathcal H U^{(m)}(t).
	$$
	Since \(\mathcal H\) is self-adjoint on \((\mathcal V_m)^N\), \(A_m(t)\) is symmetric. 
	
	Since \(U^{(m)}(t)\in(\mathcal V_m)^N\) and \(\mathcal V_m\) is spanned by eigenfunctions of \(\mathcal H\),
	$$
	\mathcal H U^{(m)}(t)\in(\mathcal V_m)^N.
	$$
	Moreover,
	$$
	U^{(m)}(t)\left(U^{(m)}(t)^\top\mathcal H U^{(m)}(t)\right)\in(\mathcal V_m)^N.
	$$
	Hence
	$$
	\mathcal P_m\nabla_GE(U^{(m)}(t))
	=
	\nabla_GE(U^{(m)}(t))
	=
	\mathcal H U^{(m)}(t)-U^{(m)}(t)A_m(t).
	$$
	Therefore,
	\begin{equation}
		\label{UHU<0}
		\begin{aligned}
			\frac{\mathrm{d}}{\mathrm{d}t}A_m(t)
			&=
			-\left(\mathcal H U^{(m)}-U^{(m)}A_m\right)^\top\mathcal H U^{(m)}
			-
			\left(U^{(m)}\right)^\top\mathcal H
			\left(\mathcal H U^{(m)}-U^{(m)}A_m\right)
			\\
			&=
			-2\left(\mathcal H U^{(m)}\right)^\top\mathcal H U^{(m)}
			+
			2A_m(t)^2.
		\end{aligned}
	\end{equation}
	Since \(\left(\mathcal H U^{(m)}\right)^\top\mathcal H U^{(m)}\) is positive semidefinite, \eqref{UHU<0} gives the differential inequality
	$$
	\frac{\mathrm{d}}{\mathrm{d}t}A_m(t)-2A_m(t)^2\leqslant0.
	$$
	
	Let \(\Phi(t)\in\mathbb R^{N\times N}\) be the solution of
	$$
	\frac{\mathrm{d}}{\mathrm{d}t}\Phi(t)=-A_m(t)\Phi(t),
	\qquad
	\Phi(0)=I_N.
	$$
 Since \(A_m(t)\) is symmetric,
	\begin{equation*}
		\begin{aligned}
			\frac{\mathrm{d}}{\mathrm{d}t}
			\left(\Phi(t)^\top A_m(t)\Phi(t)\right)
			&=
			\Phi(t)^\top
			\left(
			\frac{\mathrm{d}}{\mathrm{d}t}A_m(t)-2A_m(t)^2
			\right)
			\Phi(t)
			\leqslant0.
		\end{aligned}
	\end{equation*}
	Integrating from \(0\) to \(t\) yields
	$$
	\Phi(t)^\top A_m(t)\Phi(t)
	\leqslant
	A_m(0)
	=
	\left(\mathcal P_mU_0\right)^\top\mathcal H\mathcal P_mU_0
	\leqslant0.
	$$
	For any \(x\in\mathbb R^N\), set \(y=\Phi(t)^{-1}x\). Hence
	$$
	x^\top A_m(t)x
	=
	y^\top\Phi(t)^\top A_m(t)\Phi(t)y
	\leqslant0.
	$$
	Thus \(A_m(t)\leqslant0\), namely,
	$$
	\left(U^{(m)}(t)\right)^\top\mathcal H U^{(m)}(t)\leqslant0
	$$
	for all \(t\geqslant0\).
\end{proof}

\subsection{Proof of Proposition~\ref{existence of weak formulation of Galerkin equation}}\label{proof of proposition:existence of weak formulation of Galerkin equation}
\begin{proof}
	The weak formulation of (\ref{Galerkin equation}) is as follows
	\begin{equation}
		\label{weak formulation of Galerkin equation}
		\left\{\begin{aligned}
		&  \frac{\mathrm{d}    }{\mathrm{d}  t}\left(U^{(m)}(t), V\right)+\left\langle \mathcal{P}_m\nabla_GE(U^{(m)}(t)), V\right\rangle = 0, \quad \forall V\in \left(\mathcal{V}_m\right)^N,
			\\&U^{(m)}(0) = \mathcal{P}_mU_0,
		\end{aligned}\right.
	\end{equation}
	which is equivalent to 
	\begin{equation}
		\label{weak formulation of Galerkin equation in terms}
		\begin{aligned}
			\frac{\mathrm{d}    }{\mathrm{d}  t} \left(U^{(m)}(t), V_{jq}\right)+\left\langle \mathcal{P}_m\nabla_GE(U^{(m)}(t)), V_{jq}\right\rangle = 0, \  \forall j = 1,2,\cdots,m; q = 1, 2,\cdots,N,
		\end{aligned}
	\end{equation}
	where $U^{(m)}(t) = \sum\limits_{i=1}^{m} \sum\limits_{p=1}^N \alpha_{ip}^{(m)}(t) V_{ip} $. By the boundedness of $\mathcal{H}$ in the finite dimensional space $\mathcal{V}_m$, $\left\langle \mathcal{P}_m\nabla_GE(U), V_{jq}\right\rangle $ is continuous and satisfies a local Lipschitz condition with respect to $U\in \mathcal{V}_m$. 
	By the Picard--Lindel\"{o}f theorem (see Chapter~2, Theorem~2.2 of \cite{teschl2012ordinary}), there exists a unique solution $U^{(m)}(t)$ defined on some time interval $[0, T_m^*)$, where $T_m^*>0$ depends on the dimension $m$.

	It follows from $\left(U^{(m)}(0)\right)^\top \mathcal{H}U^{(m)}(0)\leqslant0$ that $\left(U^{(m)}(t)\right)^\top \mathcal{H}U^{(m)}(t)\leqslant0$. Therefore,
	\begin{equation*}
		\begin{aligned}
			&	\left\langle  \nabla_G  E\left( U^{(m)}(t)\right), U^{(m)}(t)\right\rangle  
			\\ =& 	\operatorname{tr}\left(\left(U^{(m)}(t)\right)^\top \mathcal{H} U^{(m)}(t)\right) -\operatorname{tr}\left(\left(U^{(m)}(t)\right)^\top U^{(m)}(t)\left(U^{(m)}(t)\right)^\top \mathcal{H}U^{(m)}(t)\right)
			\\ \geqslant &\left\langle\mathcal{H} U^{(m)}(t), U^{(m)}(t)\right\rangle
			\geqslant c_1\|\nabla U^{(m)}(t)\|^2 -c_2\|U^{(m)}(t)\|^2.
		\end{aligned}
	\end{equation*}
	
	We see from (\ref{weak formulation of Galerkin equation}) that
	\begin{equation*}
		\begin{aligned}
			\frac{1}{2} \frac{\mathrm{d}    }{\mathrm{d}  t}\left\|U^{(m)}(t)\right\|^2 =&\left\langle \frac{\mathrm{d}    }{\mathrm{d}  t}U^{(m)}(t), U^{(m)}(t) \right\rangle
			= - \left\langle \mathcal{P}_m \nabla_G E( U^{(m)}(t)),U^{(m)}(t)\right\rangle 
			\\=& \scalebox{0.99}{$ -  \left\langle   \nabla_G E(U^{(m)}(t)), U^{(m)}(t) \right\rangle
			\leqslant   -c_1  \left\|\nabla U^{(m)}(t)\right\|^2 +c_2\left\|U^{(m)}(t)\right\|^2, 	$}
		\end{aligned}
	\end{equation*}
	that is,
	\begin{equation}
		\label{Assumption derived inequalities}
		\begin{aligned}
			\frac{1}{2} \frac{\mathrm{d}    }{\mathrm{d}  t}\left\|U^{(m)}(t)\right\|^2 +c_1 \left\|                U^{(m)}(t)\right\|_1^2  \leqslant (c_1+c_2)\left\|U^{(m)}(t)\right\|^2.
		\end{aligned}
	\end{equation}
	Thus
	\begin{equation*}
		\begin{aligned}
			\frac{1}{2} \frac{\mathrm{d}    }{\mathrm{d}  t}\left\|U^{(m)}(t)\right\|^2   \leqslant (c_1+c_2)\left\|U^{(m)}(t)\right\|^2
		\end{aligned}
	\end{equation*} 
	and $ \left\|U^{(m)}(0)\right\|=  \left\|\mathcal{P}_mU_0\right\| \leqslant  \left\| U_0\right\|$. It follows from Gr\"{o}nwall's inequality that
	\begin{equation}\label{eq: uniform bound of Um}
		\begin{aligned}
			\left\|U^{(m)}(t)\right\|^2\leqslant \exp\left(2\left(c_1+c_2\right)t\right) \left\| U_0\right\|^2, \quad \forall t\in [0,T_m^*),
		\end{aligned}
	\end{equation}
	which implies that $\left\|U^{(m)}(t)\right\|^2$ is bounded for all $t\in [0,T_m^*)$.
	
	Integrating (\ref{Assumption derived inequalities}) yields
	\begin{equation*}
		\scalebox{0.98}{$	\frac{1}{2}\left\|U^{(m)}(T_m^*)\right\|^2 +c_1 \int_{0}^{T_m^*}  \left\|U^{(m)}(t)\right\|_1^2 \mathrm{d} t
			 \leqslant 	\frac{1}{2}\left\|U^{(m)}(0)\right\|^2 +\left(c_1+c_2\right) \int_{0}^{T_m^*}  \left\|U^{(m)}(t)\right\|^2 \mathrm{d} t ,	$}
	\end{equation*}
	which implies $\left\|U^{(m)}\right\|_{L^{2}\left(0, T_m^* ; \big(H_0^1(\Omega)\big)^N\right) }^2 = \int_{0}^{T_m^*}  \left\|U^{(m)}(t)\right\|_1^2 \mathrm{d} t<+\infty$, that is, $U^{(m)} \in L^{2}\left(0, T_m^* ; \big(H_0^1(\Omega)\big)^N\right)$.
	
	Since
	\begin{equation*}
		\begin{aligned}
			\lambda_1 	\left(U^{(m)}\right)^\top U^{(m)} 	\leqslant	\left(U^{(m)}\right)^\top \mathcal{H}U^{(m)} \leqslant 0 ,
		\end{aligned}
	\end{equation*}
	we have
	\begin{equation*}
		\begin{aligned}
			\left\| \left(U^{(m)}\right)^\top \mathcal{H}U^{(m)}\right\|\leqslant	|\lambda_1| 	\left\| \left(U^{(m)}\right)^\top U^{(m)}\right\|	\leqslant|\lambda_1| \left\|U^{(m)}\right\|^2.
		\end{aligned}
	\end{equation*}
	
	For any $V \in \big(H_0^1(\Omega)\big)^N$, it follows that
	\begin{equation*}
		\begin{aligned}
			\left|\left\langle  \mathcal{P}_m\nabla_G E\left( U^{(m)}\right), V\right\rangle \right| 
			=& \left|\left\langle  \nabla_G E\left( U^{(m)}\right), \mathcal{P}_mV\right\rangle \right| 
			\\ \leqslant &\scalebox{0.99}{$ \left|\left\langle  \mathcal{H} U^{(m)},   \mathcal{P}_mV \right\rangle\right| + \left|\operatorname{tr}\left(  \left( \mathcal{P}_mV\right)^\top U^{(m)} \left(U^{(m)}\right)^\top \mathcal{H}U^{(m)} \right) \right| $}
			\\ \leqslant &\scalebox{0.97}{$C_b\left\|\mathcal{P}_mV\right\|_1 \left\|U^{(m)}\right\|_1 +  \left\| \left( \mathcal{P}_mV\right)^\top U^{(m)} \right\| \left\| \left(U^{(m)}\right)^\top \mathcal{H}U^{(m)}\right\|.$}
		\end{aligned}
	\end{equation*}
	Thus, 
	\begin{equation*}
		\begin{aligned}
			\left|\left\langle  \mathcal{P}_m\nabla_G E\left( U^{(m)}\right), V\right\rangle \right| 
			\leqslant & C_b\left\|\mathcal{P}_mV\right\|_1 \left\|U^{(m)}\right\|_1 + |\lambda_1|\left\|\mathcal{P}_mV\right\| \left\|U^{(m)}\right\| \left\|U^{(m)}\right\|^2
			\\ \leqslant &\left(C_b+  |\lambda_1| \left\|U^{(m)}\right\|^2 \right)\cdot \left\|\mathcal{P}_mV\right\|_1 \left\|U^{(m)}\right\|_1.
		\end{aligned}
	\end{equation*}
	By the boundedness of $\left\|U^{(m)}(t)\right\|$ and $\mathcal{P}_m$, there exists a constant $\tilde{C}_b>0$ such that
	\begin{equation*}
		\begin{aligned}
			\frac{\left|\left\langle  \mathcal{P}_m\nabla_G E\left( U^{(m)}(t)\right), V\right\rangle \right|}{\|V\|_1}  \leqslant \tilde{C}_b\left\|U^{(m)}(t)\right\|_1 , \quad \forall t\in [0, T_m^*).
		\end{aligned}
	\end{equation*}
	Consequently,
	\begin{equation*}
		\begin{aligned}
			\left\|\frac{\mathrm{d}    }{\mathrm{d}  t} U^{(m)}\right\|_{L^{2}\left(0, T_m^* ; \left(H^{-1}(\Omega)\right)^N\right) }^2 & = \int_{0}^{T_m^*}	\left\|\frac{\mathrm{d}    }{\mathrm{d}  t} U^{(m)}(t)\right\|_{-1}^2 \mathrm{d}t  
			\\&\scalebox{0.99}{$=\int_{0}^{T_m^*}\left( \sup\limits_{V \in \left(H_0^1(\Omega)\right)^N \backslash\{0\}}\frac{	\left| \left\langle  \mathcal{P}_m  \nabla_G E\left( U^{(m)}(t)\right), V\right\rangle  \right|}{\|V\|_1}  \right)^2\mathrm{d} t $}
			\\ &\leqslant  \scalebox{0.99}{$	\tilde{C}_b^2\int_{0}^{T_m^*}	\left\|U^{(m)}(t)\right\|_1^2 \mathrm{d}t   =\tilde{C}_b^2 \left\|U^{(m)}\right\|_{L^{2}\left(0, T_m^* ; \left(H_0^1(\Omega)\right)^N\right) }^2 , $}
		\end{aligned}
	\end{equation*}
	which implies that $	\left\|\frac{\mathrm{d}    }{\mathrm{d}  t} U^{(m)}\right\|_{L^{2}\left(0, T_m^* ; \big(H^{-1}(\Omega)\big)^N\right) }$ is finite. Thus, we come to the conclusion that $\frac{\mathrm{d}   }{\mathrm{d}  t}U^{(m)} \in L^{2}\left(0, T_m^* ; \big(H^{-1}(\Omega)\big)^N\right)$.
\end{proof}

\subsection{Proof of Corollary~\ref{T be infty}}
\label{proof of corollary: T be infty}
\begin{proof}
	To prove the conclusion by contradiction, suppose $T_m^*< T$ is finite and maximal in the sense that the model \eqref{Galerkin equation} is no longer well-posed for $t\geqslant T_m^*$.
	
	By H\"{o}lder's inequality, for any $t_1,t_2\in [0,T_m^*)$, we derive the following estimate:
	\begin{equation*}
		\begin{aligned}
			\left\| U^{(m)}(t_2) -U^{(m)} (t_1)\right\|_{-1}^2 
			&\leqslant \left( \int_{t_1}^{t_2}  \left\|\frac{\mathrm{d}  U^{(m)}   }{\mathrm{d}  t} \right\|_{-1} \mathrm{d}  t \right)^2 
			\leqslant \left|t_2 -t_1\right|  \int_{t_1}^{t_2}  \left\|\frac{\mathrm{d}  U^{(m)}   }{\mathrm{d}  t} \right\|_{-1}^2 \mathrm{d}  t  
			\\&\leqslant \left|t_2 -t_1\right|	\left\| \frac{\mathrm{d} U^{(m)}    }{\mathrm{d}  t}  \right\|_{L^{2}\left(0, T_m^* ;  \left(H^{-1}(\Omega)\right)^N\right)}^2.
		\end{aligned}
	\end{equation*}
	By the equivalence of norms in the finite-dimensional space $\mathcal{V}_m$, there exists a constant $C_m>0$, depending on $m$, such that $\| V \|_1 \leqslant C_m \| V \|_{-1}$ for all $V \in \left(\mathcal{V}_m\right)^N$. Taking the square root of the previous inequality yields
	\begin{equation*}
		\begin{aligned}
			\left\| U^{(m)}(t_2) -U^{(m)}(t_1)\right\|_1 \leqslant  C_m \sqrt{\left|t_2 -t_1\right|} \left\| \frac{\mathrm{d} U^{(m)}    }{\mathrm{d}  t}  \right\|_{L^{2}\left(0, T_m^* ;  \left(H^{-1}(\Omega)\right)^N\right)}.
		\end{aligned}
	\end{equation*}
	Since $\frac{\mathrm{d} U^{(m)}}{\mathrm{d} t} \in L^2\left(0, T_m^* ; \big(H^{-1}(\Omega)\big)^N\right)$ as established in Proposition \ref{existence of weak formulation of Galerkin equation}, this inequality demonstrates that $U^{(m)}(t)$ is uniformly continuous with respect to the $H_0^1$ norm. It guarantees that $U^{(m)}(t)$ is a Cauchy sequence as $t \rightarrow T_m^*$. Therefore, the strong limit exists
	\begin{equation*}
		\begin{aligned}
			\lim_{t \rightarrow T_m^*} U^{(m)}(t) = U^{(m)}_{T_m^*} \in \big(H_0^1(\Omega)\big)^N.
		\end{aligned}
	\end{equation*}
	Since $T_m^*$ is finite, we can use $U^{(m)}_{T_m^*}$ as an initial condition to extend the solution to an interval $[T_m^*, T_m^*+t_{\delta})$ for some $t_{\delta}>0$ via the Picard--Lindel\"{o}f theorem. This contradicts the assumed maximality of $T_m^*$, and thus we come to the conclusion that the solution exists for any given $T>0$, and
	\begin{equation*}
		\begin{aligned}
			U^{(m)} \in C\left([0, T) ;\big(H_0^1(\Omega)\big)^N\right).
		\end{aligned}
	\end{equation*}
\end{proof}

\subsection{Proof of Lemma~\ref{lem:compact embedding}}\label{proof of lemma: compact embedding}
\begin{proof}
	Let $\{u_n\}_{n=1}^{\infty}$ be bounded in $\mathcal D(\mathcal H)$. Then there exists \(M>0\) such that
	$$
	\|u_n\|_1\leqslant M,
	\qquad
	\|\mathcal Hu_n\|\leqslant M,
	\quad \forall n\geqslant1.
	$$
	Since the embedding \(H_0^1(\Omega)\hookrightarrow L^2(\Omega)\) is compact, there exists a subsequence, still denoted by \(\{u_n\}\), such that
	$$
	\|u_n-u_k\|\to0
	\quad\text{as }n,k\to\infty .
	$$
	For \(n,k\geqslant1\), applying G{\aa}rding's inequality to \(u_n-u_k\) gives
	$$
	c_1\|\nabla(u_n-u_k)\|^2
	\leqslant
	c_2\|u_n-u_k\|^2
	+
	\left\langle \mathcal H(u_n-u_k),u_n-u_k\right\rangle .
	$$
	Since \(u_n-u_k\in\mathcal D(\mathcal H)\), the last pairing is the \(L^2\)-inner product. Hence
	\begin{equation*}
		\begin{aligned}
			c_1\|\nabla(u_n-u_k)\|^2
		&	\leqslant
			c_2\|u_n-u_k\|^2
			+
			\|\mathcal H(u_n-u_k)\|\,\|u_n-u_k\|
		\\&	\leqslant
			c_2\|u_n-u_k\|^2+2M\|u_n-u_k\|.
		\end{aligned}
	\end{equation*}
	The right-hand side tends to zero as \(n,k\to\infty\). Thus
	$$
	\|\nabla(u_n-u_k)\|\to0.
	$$
	Together with \(\|u_n-u_k\|\to0\), this shows that \(\{u_n\}\) is Cauchy in \(H_0^1(\Omega)\). Since \(H_0^1(\Omega)\) is complete, this subsequence converges strongly in \(H_0^1(\Omega)\).
	
	For the product space, let \(U_n=(u_{n,1},\ldots,u_{n,N})\) be bounded in \((\mathcal D(\mathcal H))^N\). Applying the scalar result to each component and using a diagonal subsequence argument, we obtain a subsequence converging strongly in \((H_0^1(\Omega))^N\). Hence the embedding
	$$
	(\mathcal D(\mathcal H))^N\hookrightarrow (H_0^1(\Omega))^N
	$$
	is compact.
\end{proof}

\subsection{Proof of Lemma~\ref{solution of linear evolution problem}}
\label{proof of lemma:solution of linear evolution problem}

\begin{proof}
	Under the spectral assumptions on $\mathcal H$, the operator $\mathcal H$ is self-adjoint on $L^2(\Omega)$, with domain $\mathcal D(\mathcal H)$, and is bounded from below by $\lambda_1$.
	Hence $-k\mathcal H$ generates the strongly continuous semigroup
	$$
	\{\exp(-k\mathcal Ht)\}_{t\geqslant0}
	$$
	on $L^2(\Omega)$, and
	$$
	\|\exp(-k\mathcal Ht)\|
	\leqslant
	e^{-k\lambda_1t},
	\qquad t\geqslant0.
	$$
	The same assertion holds componentwise on $\big(L^2(\Omega)\big)^N$.
	
	For $U_0\in(\mathcal D(\mathcal H))^N$, define
	$$
	W(t)=\exp(-k\mathcal Ht)U_0.
	$$
	By the spectral theorem,
	$$
	W(t)\in(\mathcal D(\mathcal H))^N,
	\qquad
	\mathcal HW(t)=\exp(-k\mathcal Ht)\mathcal HU_0,
	\qquad t\geqslant0.
	$$
	Thus
	$$
	W\in C\left([0,\infty);\big(L^2(\Omega)\big)^N\right),
	\qquad
	\mathcal HW\in C\left([0,\infty);\big(L^2(\Omega)\big)^N\right).
	$$
	Hence
	$$
	W\in C\left([0,\infty);(\mathcal D(\mathcal H))^N\right),
	$$
	where $(\mathcal D(\mathcal H))^N$ is equipped with the graph norm. Moreover, the semigroup theorem gives
	$$
	\frac{\mathrm dW(t)}{\mathrm dt}
	=
	-k\mathcal HW(t),
	\qquad t\geqslant0,
	$$
	and therefore
	$$
	W\in C^1\left([0,\infty);\big(L^2(\Omega)\big)^N\right).
	$$
	Thus $W$ solves \eqref{linear evolution problem}. Uniqueness follows from the uniqueness of classical solutions generated by $-k\mathcal H$.
\end{proof}

\subsection{Proof of Lemma~\ref{negative condition of initial value}}	\label{proof of lemma: negative condition of initial value}
	\begin{proof}
		Let \(a\in\mathbb R^N\), and set
		$$
		v=U_0a.
		$$
		If \(v=0\), then
		$$
		a^\top U_0^\top e^{-k\mathcal Ht}U_0a
		=
		a^\top U_0^\top U_0a
		=
		0,
		$$
		and the desired inequality holds at $a$. Hence it suffices to consider the case \(v\neq0\).
		
		Since \(U_0\in(\mathcal D(\mathcal H))^N\), we have \(v\in\mathcal D(\mathcal H)\). Write the spectral expansion
		$$
		v=\sum_{j=1}^{\infty}\alpha_jv_j.
		$$
		Then
		$$
		\left(e^{-k\mathcal Ht}v,v\right)
		=
		\sum_{j=1}^{\infty}e^{-k\lambda_jt}|\alpha_j|^2,
		$$
		and
		$$
		\left(\mathcal Hv,v\right)
		=
		\sum_{j=1}^{\infty}\lambda_j|\alpha_j|^2.
		$$
		Since \(\lambda\mapsto e^{-k\lambda t}\) is convex with respect to \(t\geqslant0\), Jensen's inequality gives
		$$
		\begin{aligned}
			\frac{\left(e^{-k\mathcal Ht}v,v\right)}{\|v\|^2}
			=
			\frac{\sum_{j=1}^{\infty}e^{-k\lambda_jt}|\alpha_j|^2}
			{\sum_{j=1}^{\infty}|\alpha_j|^2}
			\geqslant
			\exp\left(
			-kt
			\frac{\sum_{j=1}^{\infty}\lambda_j|\alpha_j|^2}
			{\sum_{j=1}^{\infty}|\alpha_j|^2}
			\right)
		=
			\exp\left(
			-kt
			\frac{\left(\mathcal Hv,v\right)}{\|v\|^2}
			\right).
		\end{aligned}
		$$
		From \(U_0^\top\mathcal HU_0\leqslant0\), we obtain
		$$
		\left(\mathcal Hv,v\right)
		=
		a^\top U_0^\top\mathcal HU_0a
		\leqslant0.
		$$
		Consequently,
		$$
		\exp\left(
		-kt
		\frac{\left(\mathcal Hv,v\right)}{\|v\|^2}
		\right)
		\geqslant1,
		\qquad t\geqslant0.
		$$
		Thus
		$$
		\left(e^{-k\mathcal Ht}v,v\right)
		\geqslant
		\|v\|^2,
		\qquad t\geqslant0.
		$$
		Recalling that \(v=U_0a\), we arrive at
		$$
		a^\top U_0^\top e^{-k\mathcal Ht}U_0a
		\geqslant
		a^\top U_0^\top U_0a,
		\qquad \forall a\in\mathbb R^N.
		$$
		Therefore,
		$$
		U_0^\top e^{-k\mathcal Ht}U_0
		\geqslant
		U_0^\top U_0,
		\qquad t\geqslant0.
		$$
	\end{proof}

\subsection{Proof of Lemma~\ref{expression of Galerkin solution}}
\label{proof of lemma: expression of Galerkin solution}

\begin{proof}
	For $U^{(m)}(t)\in(\mathcal V_m)^N$, write
	$$
	U^{(m)}(t)=V_m\alpha^{(m)}(t),
	$$
	where
	$$
	V_m=(v_1,\ldots,v_m),
	\qquad
	V_m^\top V_m=I_m,
	$$
	and $\alpha^{(m)}(t)\in\mathbb R^{m\times N}$. Since
	$$
	\mathcal H_mV_m=V_m\Lambda_m,
	\qquad
	\Lambda_m=\operatorname{diag}(\lambda_1,\ldots,\lambda_m),
	$$
	substituting $U^{(m)}(t)=V_m\alpha^{(m)}(t)$ into the Galerkin equation leads to
	$$
	\frac{\mathrm d}{\mathrm dt}\alpha^{(m)}(t)
	=
	-\Lambda_m\alpha^{(m)}(t)
	+
	\alpha^{(m)}(t)
	\big(\alpha^{(m)}(t)\big)^\top
	\Lambda_m\alpha^{(m)}(t).
	$$
	 The finite-dimensional representation formula for Oja's flow \cite{yan1994global} then gives
	\begin{equation*}
		\begin{aligned}
			\alpha^{(m)}(t)
			=&
			\exp(-\Lambda_mt)\alpha^{(m)}(0)
			\left[
			I_N-\big(\alpha^{(m)}(0)\big)^\top\alpha^{(m)}(0) \right.
		\\&\quad	\left. + \big(\alpha^{(m)}(0)\big)^\top
			\exp(-2\Lambda_mt)\alpha^{(m)}(0)
			\right]^{-1/2}
			Q^{(m)}(t),
		\end{aligned}
	\end{equation*}
	where $Q^{(m)}(t)\in\mathcal O^N$.
	By Lemma~\ref{negative condition of initial value}, applied in the finite-dimensional Galerkin space,
	$$
	\big(\alpha^{(m)}(0)\big)^\top
	\exp(-2\Lambda_mt)\alpha^{(m)}(0)
	\geqslant
	\big(\alpha^{(m)}(0)\big)^\top\alpha^{(m)}(0),
	\qquad t\geqslant0.
	$$
	Thus
	$$
	I_N-\big(\alpha^{(m)}(0)\big)^\top\alpha^{(m)}(0)
	+
	\big(\alpha^{(m)}(0)\big)^\top
	\exp(-2\Lambda_mt)\alpha^{(m)}(0)
	\geqslant I_N.
	$$
	Hence $	\left[I_N-\big(\alpha^{(m)}(0)\big)^\top\alpha^{(m)}(0)+	\big(\alpha^{(m)}(0)\big)^\top	\exp(-2\Lambda_mt)\alpha^{(m)}(0)\right]^{-\frac{1}{2}}$ is well defined for all \(t\geqslant0\).
	
	Multiplying the representation of \(\alpha^{(m)}(t)\) by \(V_m\) from the left and using
	$$
	V_m\exp(-\Lambda_mt)\alpha^{(m)}(0)
	=
	\exp(-\mathcal H_mt)U^{(m)}(0),
	$$
	we obtain
	\begin{equation*}
		\begin{aligned}
			U^{(m)}(t)
			=&\,
			\exp(-\mathcal H_mt)U^{(m)}(0)
			\left[
			I_N-\big(U^{(m)}(0)\big)^\top U^{(m)}(0)
			\right.
			\\
			&\qquad\left.
			+
			\big(U^{(m)}(0)\big)^\top
			\exp(-2\mathcal H_mt)U^{(m)}(0)
			\right]^{-1/2}
			Q^{(m)}(t).
		\end{aligned}
	\end{equation*}
\end{proof}

\subsection{Proof of Lemma~\ref{singular values of solution}}
\label{proof of lemma: singular values of solution}

\begin{proof}
	Since $U_0^\top\mathcal{H}U_0<0$, the matrix $U_0^\top U_0$ is positive definite. Hence $U(t)^\top U(t)$ has $N$ positive eigenvalues for all $t\geqslant0$.

	We first prove the convergence of these eigenvalues. Since $U_0^\top\mathcal{H}U_0<0$, there exists a constant $c>0$ such that
	$$
	a^\top U_0^\top\mathcal{H}U_0a
	\leqslant
	-c\,a^\top U_0^\top U_0a,
	\qquad \forall a\in\mathbb R^N.
	$$
	By the argument in the proof of Lemma~\ref{negative condition of initial value}, for any $a\in\mathbb R^N$ and $t\geqslant0$,
	$$
	\begin{aligned}
		a^\top U_0^\top\exp(-2\mathcal{H}t)U_0a
		&\geqslant
		\exp\left(
		-2t
		\frac{a^\top U_0^\top\mathcal{H}U_0a}
		{a^\top U_0^\top U_0a}
		\right)
		a^\top U_0^\top U_0a
		\\
		&\geqslant
		e^{2ct}a^\top U_0^\top U_0a .
	\end{aligned}
	$$
	Thus
	$$
	\left(U_0^\top\exp(-2\mathcal{H}t)U_0\right)^{-1}
	\to0
	\qquad\text{as }t\to\infty.
	$$
	
	By Theorem~\ref{expression of solution(linear operator)},
	$$
	U(t)
	=
	\exp(-\mathcal{H}t)U_0
	\left[
	I_N-U_0^\top U_0
	+
	U_0^\top\exp(-2\mathcal{H}t)U_0
	\right]^{-1/2}
	Q(t),
	$$
	where $Q(t)\in\mathcal O^N$. Therefore, the eigenvalues of $U(t)^\top U(t)$ are the eigenvalues of
	$$
	\begin{aligned}
		&
		\left[
		I_N-U_0^\top U_0
		+
		U_0^\top\exp(-2\mathcal{H}t)U_0
		\right]^{-1/2}
		U_0^\top\exp(-2\mathcal{H}t)U_0
		\\
		&\qquad\cdot
		\left[
		I_N-U_0^\top U_0
		+
		U_0^\top\exp(-2\mathcal{H}t)U_0
		\right]^{-1/2}.
	\end{aligned}
	$$
	Equivalently, they are the eigenvalues of
	$$
	\begin{aligned}
		\left[
		I_N
		+
		\left(U_0^\top\exp(-2\mathcal{H}t)U_0\right)^{-1/2}
		\left(I_N-U_0^\top U_0\right)
		\left(U_0^\top\exp(-2\mathcal{H}t)U_0\right)^{-1/2}
		\right]^{-1}.
	\end{aligned}
	$$
	Since
	$$
	\left(U_0^\top\exp(-2\mathcal{H}t)U_0\right)^{-1}
	\to0,
	$$
	we obtain
	$$
	\lim_{t\to\infty}\sigma_i(t)=1,
	\qquad 1\leqslant i\leqslant N.
	$$
	
	We next prove the monotonicity assertions. For any $a\in\mathbb R^N\setminus\{0\}$, write
	$$
	U_0a=\sum_{j=1}^{\infty}\alpha_jv_j.
	$$
	Then
	$$
	\frac{
		\left(\mathcal{H}\exp(-\mathcal{H}t)U_0a,\exp(-\mathcal{H}t)U_0a\right)
	}
	{\|\exp(-\mathcal{H}t)U_0a\|^2}
	=
	\frac{
		\sum_{j=1}^{\infty}\lambda_j e^{-2\lambda_jt}|\alpha_j|^2
	}
	{
		\sum_{j=1}^{\infty}e^{-2\lambda_jt}|\alpha_j|^2
	}.
	$$
	Differentiating the last quotient gives
	$$
	\begin{aligned}
		&\frac{\mathrm d}{\mathrm dt}
		\frac{
			\sum_{j=1}^{\infty}\lambda_j e^{-2\lambda_jt}|\alpha_j|^2
		}
		{
			\sum_{j=1}^{\infty}e^{-2\lambda_jt}|\alpha_j|^2
		}
		\\
		&\quad =
		-2
		\frac{
			\left(\sum_{j=1}^{\infty}\lambda_j^2e^{-2\lambda_jt}|\alpha_j|^2\right)
			\left(\sum_{j=1}^{\infty}e^{-2\lambda_jt}|\alpha_j|^2\right)
			-
			\left(\sum_{j=1}^{\infty}\lambda_je^{-2\lambda_jt}|\alpha_j|^2\right)^2
		}
		{
			\left(\sum_{j=1}^{\infty}e^{-2\lambda_jt}|\alpha_j|^2\right)^2
		}
		\leqslant0.
	\end{aligned}
	$$
	Since $U_0^\top\mathcal{H}U_0<0$, the quotient is negative at $t=0$. Hence it is negative for all $t\geqslant0$. Therefore
	$$
	\begin{aligned}
		\frac{\mathrm d}{\mathrm dt}
		a^\top U_0^\top\exp(-2\mathcal{H}t)U_0a
		&=
		-2
		\left(\mathcal{H}\exp(-\mathcal{H}t)U_0a,
		\exp(-\mathcal{H}t)U_0a\right)
		\\
		&\geqslant0.
	\end{aligned}
	$$
	Thus $U_0^\top\exp(-2\mathcal{H}t)U_0$ is nondecreasing, and its inverse is nonincreasing.
	
	If $\sigma_i(0)\leqslant1$ for $1\leqslant i\leqslant N$, then $U_0^\top U_0\leqslant I_N$. The matrix
	$$
	\begin{aligned}
		&\left(U_0^\top\exp(-2\mathcal{H}t)U_0\right)^{-1/2}
		\left(I_N-U_0^\top U_0\right)
		\left(U_0^\top\exp(-2\mathcal{H}t)U_0\right)^{-1/2}
	\end{aligned}
	$$
	has the same eigenvalues as
	$$
	\begin{aligned}
		&\left(I_N-U_0^\top U_0\right)^{1/2}
		\left(U_0^\top\exp(-2\mathcal{H}t)U_0\right)^{-1}
		\left(I_N-U_0^\top U_0\right)^{1/2},
	\end{aligned}
	$$
	which is nonincreasing. Hence the ordered eigenvalues of
	$$
	\begin{aligned}
		&\left[
		I_N
		+
		\left(U_0^\top\exp(-2\mathcal{H}t)U_0\right)^{-1/2}
		\left(I_N-U_0^\top U_0\right)
		\left(U_0^\top\exp(-2\mathcal{H}t)U_0\right)^{-1/2}
		\right]^{-1}
	\end{aligned}
	$$
	are nondecreasing. Since $I_N-U_0^\top U_0\geqslant0$, they are bounded above by $1$. Hence $\sigma_i(t)$ is nondecreasing and satisfies
	$$
	\sigma_i(t)\leqslant1,
	\qquad 1\leqslant i\leqslant N.
	$$
	
	If $\sigma_i(0)\geqslant1$ for $1\leqslant i\leqslant N$, then $U_0^\top U_0\geqslant I_N$. The same argument, applied to $U_0^\top U_0-I_N$, shows that the ordered eigenvalues of
	$$
	\begin{aligned}
		&\left[
		I_N
		-
		\left(U_0^\top\exp(-2\mathcal{H}t)U_0\right)^{-1/2}
		\left(U_0^\top U_0-I_N\right)
		\left(U_0^\top\exp(-2\mathcal{H}t)U_0\right)^{-1/2}
		\right]^{-1}
	\end{aligned}
	$$
	are nonincreasing. Since
	$$
	I_N-U_0^\top U_0+U_0^\top\exp(-2\mathcal{H}t)U_0\geqslant I_N,
	$$
	these eigenvalues are bounded below by $1$. Hence $\sigma_i(t)$ is nonincreasing and satisfies
	$$
	\sigma_i(t)\geqslant1,
	\qquad 1\leqslant i\leqslant N.
	$$
	
	Finally, let
	$$
	\beta=\min\{1,\sigma_1(0)\}.
	$$
	Since
	$$
	U_0^\top U_0\geqslant\beta I_N
	$$
	and
	$$
	U_0^\top\exp(-2\mathcal{H}t)U_0\geqslant U_0^\top U_0,
	$$
	we obtain
	$$
	\begin{aligned}
		&U_0^\top\exp(-2\mathcal{H}t)U_0
		-
		\beta
		\left[
		I_N-U_0^\top U_0
		+
		U_0^\top\exp(-2\mathcal{H}t)U_0
		\right]
		\\
		&\quad =
		(1-\beta)U_0^\top\exp(-2\mathcal{H}t)U_0
		+
		\beta U_0^\top U_0
		-
		\beta I_N
		\\
		&\quad \geqslant
		U_0^\top U_0-\beta I_N
		\geqslant0.
	\end{aligned}
	$$
	Therefore, every eigenvalue of $U(t)^\top U(t)$ is bounded below by $\beta$, that is,
	$$
	\sigma_i(t)\geqslant\beta,
	\qquad 1\leqslant i\leqslant N,\quad t\geqslant0.
	$$
\end{proof}

\subsection{Proof of Lemma~\ref{Bound of UHU}}
\label{proof of lemma: Bound of UHU}

\begin{proof}
By Lemma~\ref{singular values of solution}, $U_0^\top U_0$ is positive definite. Define
$$
c_*=-\max_{a\in\mathbb R^N\backslash\{0\}}\frac{a^\top U_0^\top\mathcal H U_0a}{a^\top U_0^\top U_0a}.
$$
Thus, for every $a\in\mathbb R^N$ and $t\geqslant0$,
$$
(\mathcal H\exp(-\mathcal Ht)U_0a,\exp(-\mathcal Ht)U_0a)\leqslant -c_*\|\exp(-\mathcal Ht)U_0a\|^2.
$$
	
	By Theorem~\ref{expression of solution(linear operator)}, there exists $Q(t)\in\mathcal O^N$ such that
	$$
	U(t)
	=
	\exp(-\mathcal Ht)U_0
	\left[
	I_N-U_0^\top U_0
	+
	U_0^\top\exp(-2\mathcal Ht)U_0
	\right]^{-1/2}
	Q(t).
	$$
	For any $b\in\mathbb R^N$, set
	$$
	a
	=
	\left[
	I_N-U_0^\top U_0
	+
	U_0^\top\exp(-2\mathcal Ht)U_0
	\right]^{-1/2}
	Q(t)b.
	$$
	Then
	$$
	U(t)b=\exp(-\mathcal Ht)U_0a.
	$$
	Using the preceding estimate, we obtain
	\begin{equation*}
		\begin{aligned}
			b^\top U(t)^\top\mathcal H U(t)b
			&=
			\left(\mathcal H U(t)b,U(t)b\right)
			\\
			&=
			\left(
			\mathcal H\exp(-\mathcal Ht)U_0a,
			\exp(-\mathcal Ht)U_0a
			\right)
			\\
			&\leqslant
			-c_*
			\left\|\exp(-\mathcal Ht)U_0a\right\|^2
			\\
			&=
			-c_*\,b^\top U(t)^\top U(t)b .
		\end{aligned}
	\end{equation*}
	By Lemma~\ref{singular values of solution}, with
	$$
	\beta=\min\{1,\sigma_1(0)\}>0,
	$$
	we have
	$$
	U(t)^\top U(t)\geqslant \beta I_N,
	\qquad t\geqslant0.
	$$
	Therefore
	$$
	b^\top U(t)^\top\mathcal H U(t)b
	\leqslant
	-c_*\beta |b|^2,
	\qquad b\in\mathbb R^N,\quad t\geqslant0.
	$$
	Let
	$$
	c_0=c_*\beta .
	$$
	Then $c_0>0$ depends only on $U_0$, and
	$$
	U(t)^\top\mathcal H U(t)
	\leqslant
	-c_0I_N,
	\qquad t\geqslant0.
	$$
	In particular,
	$$
	U(t)^\top\mathcal H U(t)<0,
	\qquad t\geqslant0,
	$$
	which together with Lemma \ref{lem:compact embedding} completes the proof.
	\end{proof}

\subsection{Proof of Lemma~\ref{convergence of Z}}\label{proof of lemma: convergence of Z}
\begin{proof}
	Under the spectral assumptions on $\mathcal H$, the eigenvalues satisfy $\lambda_i\to+\infty$.
	 Hence $\mathcal H$ has only finitely many strictly negative eigenvalues. Let
	$$
	\lambda_1\leqslant\lambda_2\leqslant\cdots\leqslant\lambda_L<0
	$$
	be all strictly negative eigenvalues of $\mathcal H$. Let $\mathcal P_{\lambda<0}$ be the spectral projection associated with these eigenvalues, and set
	$$
	\mathcal P_{\lambda\geqslant0}=\mathcal I-\mathcal P_{\lambda<0}.
	$$

	If $\mathcal P_{\lambda<0}U_0a=0$ for some $a\in\mathbb R^N$, then $U_0a=\mathcal P_{\lambda\geqslant0}U_0a$. By the spectral decomposition, $\mathcal H$ is nonnegative on the nonnegative spectral subspace. Thus
	$$
	a^\top U_0^\top\mathcal H U_0a
	=
	\langle\mathcal H U_0a,U_0a\rangle
	\geqslant0.
	$$
	Since $U_0^\top\mathcal H U_0<0$, this gives $a=0$. Hence $\mathcal P_{\lambda<0}U_0$ has full column rank $N$, and $L\geqslant N$.
	
	Set
	$$
	W(t)=e^{-\mathcal Ht}U_0,\qquad
	W_-(t)=e^{-\mathcal Ht}\mathcal P_{\lambda<0}U_0,\qquad
	W_+(t)=e^{-\mathcal Ht}\mathcal P_{\lambda\geqslant0}U_0.
	$$
	Write
	$$
	\mathcal P_{\lambda<0}U_0
	=
	\left(
	\sum_{j=1}^L m_{j1}v_j,\ldots,
	\sum_{j=1}^L m_{jN}v_j
	\right),
	$$
	and let $M=(m_{jr})\in\mathbb R^{L\times N}$. Since $M$ has rank $N$, choose rows $M_{i_1},\ldots,M_{i_N}$ such that
	$$
	M_I=(M_{i_1},\ldots,M_{i_N})^\top
	$$
	is nonsingular and
	$$
	\sum_{r=1}^N\lambda_{i_r}
	$$
	is minimal among all nonsingular choices of $N$ rows. Set
	$$
	I=\{i_1,\ldots,i_N\},
	\qquad
	\Lambda_I=\operatorname{diag}(\lambda_{i_1},\ldots,\lambda_{i_N}).
	$$
	For each $t\geqslant0$, the matrix $M_I^{-1}e^{\Lambda_I t}$ is nonsingular. Hence the columns of $W_-(t)$ and $W_-(t)M_I^{-1}e^{\Lambda_I t}$ generate the same subspace. The $r$-th column of $W_-(t)M_I^{-1}e^{\Lambda_I t}$ is
	$$
	v_{i_r}
	+
	\sum_{\substack{1\leqslant j\leqslant L\\ j\notin I}}
	(M_jM_I^{-1})_r e^{-(\lambda_j-\lambda_{i_r})t}v_j,
	$$
	where $M_j$ is the $j$-th row of $M$. If $j\notin I$, $\lambda_j<\lambda_{i_r}$, and $(M_jM_I^{-1})_r\neq0$, then replacing the row $M_{i_r}$ in $M_I$ by $M_j$ gives a nonsingular minor with a smaller eigenvalue sum. This contradicts the choice of $I$. Therefore
	$$
	(M_jM_I^{-1})_r=0
	\qquad
	\text{whenever }j\notin I\text{ and }\lambda_j<\lambda_{i_r}.
	$$
	It follows that the $r$-th column of $W_-(t)M_I^{-1}e^{\Lambda_I t}$ converges exponentially in the graph norm of $\mathcal H$ to
	$$
	\phi_r^\infty
	=
	v_{i_r}
	+
	\sum_{\substack{1\leqslant j\leqslant L\\ j\notin I,\ \lambda_j=\lambda_{i_r}}}
	(M_jM_I^{-1})_r v_j,
	\qquad r=1,\ldots,N.
	$$
	The functions $\phi_1^\infty,\ldots,\phi_N^\infty$ are linearly independent, since their coefficients in the rows indexed by $I$ form the identity matrix.
	
	Define
	$$
	\mathcal Z_\infty V
	=
	\scalebox{0.94}{$(\phi_1^\infty,\ldots,\phi_N^\infty)
	\left[
	(\phi_1^\infty,\ldots,\phi_N^\infty)^\top
	(\phi_1^\infty,\ldots,\phi_N^\infty)
	\right]^{-1}
	(\phi_1^\infty,\ldots,\phi_N^\infty)^\top V, \ 
	\forall V\in(L^2(\Omega))^N.$} 
	$$
	Set
	$$
	G_-(t)=W_-(t)^\top W_-(t).
	$$
	Since $M_I^{-1}e^{\Lambda_I t}$ is nonsingular,
	\begin{equation*}
		\begin{aligned}
			W_-(t)G_-(t)^{-1}W_-(t)^\top
			={}&
			W_-(t)M_I^{-1}e^{\Lambda_I t}
		 \cdot
			\left[
			\big(W_-(t)M_I^{-1}e^{\Lambda_I t}\big)^\top
			\big(W_-(t)M_I^{-1}e^{\Lambda_I t}\big)
			\right]^{-1}
			\\
			&\cdot
			\big(W_-(t)M_I^{-1}e^{\Lambda_I t}\big)^\top .
		\end{aligned}
	\end{equation*}
	Therefore, there holds
	$$
\scalebox{0.94}{$ 		\left\|
	\left[
	W_-(t)G_-(t)^{-1}W_-(t)^\top-\mathcal Z_\infty
	\right]V
	\right\|
	+
	\left\|
	\mathcal H
	\left[
	W_-(t)G_-(t)^{-1}W_-(t)^\top-\mathcal Z_\infty
	\right]V
	\right\|
	\leqslant
	Ce^{-\gamma t}\|V\| $}
	$$
	for all $t\geqslant0$ and all $V\in(L^2(\Omega))^N$.
	
	By Theorem~\ref{expression of solution(linear operator)},
	$$
	\mathcal Z(t)
	=
	W(t)
	\left[I_N-U_0^\top U_0+W(t)^\top W(t)\right]^{-1}
	W(t)^\top .
	$$
	The negative and nonnegative spectral subspaces are orthogonal, and hence
	$$
	W(t)^\top W(t)=G_-(t)+W_+(t)^\top W_+(t).
	$$
	Note that $\mathcal P_{\lambda<0}U_0$ has full column rank, we have
	$$
	G_-(t)\geqslant ce^{-2\lambda_Lt}I_N,
	\qquad
	\|G_-(t)^{-1/2}\|\leqslant Ce^{\lambda_Lt}.
	$$
	Moreover, $W_-(t)G_-(t)^{-1/2}$ has orthonormal columns and lies in the finite-dimensional negative spectral subspace. Thus
	$$
	\|\mathcal HW_-(t)G_-(t)^{-1/2}\|\leqslant C.
	$$
	By the spectral theorem and Lemma~\ref{solution of linear evolution problem},
	$$
	\mathcal HW_+(t)
	=
	e^{-\mathcal Ht}\mathcal H\mathcal P_{\lambda\geqslant0}U_0.
	$$
	Since $e^{-\mathcal Ht}$ is contractive on the nonnegative spectral subspace, $W_+(t)$ and $\mathcal HW_+(t)$ are uniformly bounded. Therefore
	$$
	\|W_+(t)G_-(t)^{-1/2}\|
	+
	\|\mathcal HW_+(t)G_-(t)^{-1/2}\|
	\leqslant
	Ce^{\lambda_Lt}.
	$$
	
	Let
	$$
	\Delta(t)
	=
	G_-(t)^{-1/2}
	\left[I_N-U_0^\top U_0+W_+(t)^\top W_+(t)\right]
	G_-(t)^{-1/2}.
	$$
	Then
	$$
	\|\Delta(t)\|\leqslant Ce^{2\lambda_Lt}.
	$$
	For all sufficiently large $t$,
	$$
	\|[I_N+\Delta(t)]^{-1}\|\leqslant C,
	\qquad
	\|[I_N+\Delta(t)]^{-1}-I_N\|\leqslant Ce^{2\lambda_Lt},
	$$
	and
	\begin{equation*}
		\begin{aligned}
			&\left[I_N-U_0^\top U_0+W(t)^\top W(t)\right]^{-1}
			\\
			&\quad =
			G_-(t)^{-1/2}[I_N+\Delta(t)]^{-1}G_-(t)^{-1/2}.
		\end{aligned}
	\end{equation*}
	Using $W(t)=W_-(t)+W_+(t)$, the preceding bounds yield
	$$
\scalebox{0.93}{$ 		\left\|
	\left[
	\mathcal Z(t)-W_-(t)G_-(t)^{-1}W_-(t)^\top
	\right]V
	\right\|
	+
	\left\|
	\mathcal H
	\left[
	\mathcal Z(t)-W_-(t)G_-(t)^{-1}W_-(t)^\top
	\right]V
	\right\|
	\leqslant
	Ce^{\lambda_Lt}\|V\| $}
	$$
	for all sufficiently large $t$ and all $V\in(L^2(\Omega))^N$. 
	
	Combining the last two estimates, we obtain
	$$
	\|(\mathcal Z(t)-\mathcal Z_\infty)V\|
	+
	\|\mathcal H(\mathcal Z(t)-\mathcal Z_\infty)V\|
	\leqslant
	Ke^{-\gamma t}\|V\|,
	\qquad
	\forall t\geqslant0,\quad
	\forall V\in(L^2(\Omega))^N.
	$$
	
	By construction,
	$$
	\mathcal Z_\infty^2=\mathcal Z_\infty.
	$$
	Each $\phi_r^\infty$ belongs to $\ker(\mathcal H-\lambda_{i_r}I)$. Hence the range of $\mathcal Z_\infty$ and its orthogonal complement are invariant under $\mathcal H$. Thus $\mathcal Z_\infty$ commutes with $\mathcal H$ on $(\mathcal D(\mathcal H))^N$, that is,
	$$
	\mathcal H\mathcal Z_\infty V
	=
	\mathcal Z_\infty\mathcal H V,
	\qquad
	\forall V\in(\mathcal D(\mathcal H))^N.
	$$
	Furthermore,
	$$
	\mathcal H\mathcal Z_\infty V
	=
	\mathcal Z_\infty\mathcal H\mathcal Z_\infty V,
	\qquad
	\forall V\in(L^2(\Omega))^N.
	$$
\end{proof}

\subsection{Proof of Lemma~\ref{meaning of initial condition}}\label{proof of lemma: meaning of initial condition} 
\begin{proof}
	If $[U]\in B([V^{(*)}],\delta)$, then there exists $Q\in\mathcal{O}^N$ such that 
	\begin{equation*}
		\|UQ-V^{(*)}\|_1\leqslant \delta.
	\end{equation*}
	Thus we obtain
	\begin{equation*}
		\|(V^{(*)})^\top UQ-I_N\| \leqslant \|UQ-V^{(*)}\| \leqslant \delta.
	\end{equation*}
	For $\delta<1$, the matrix $(V^{(*)})^\top UQ$ is invertible. 
	Hence, we conclude that
	\begin{equation*}
		\operatorname{rank}\big((V^{(*)})^\top U\big)=N.
	\end{equation*}
	Equivalently, the projection of $\operatorname{span}\{U\}$ onto the target eigenspace $\operatorname{span}\{v_1,\ldots,v_N\}$ has full rank. 
	This guarantees that $U$ contains components of all eigenfunctions corresponding to the $N$ smallest eigenvalues.
\end{proof}

\subsection{Proof of Corollary~\ref{convergence rate}}\label{proof of corollary: convergence rate}
\begin{proof}
	Differentiating gives
	\begin{equation}
		\label{d nabla_GE(U)/dt}
		\begin{aligned}
			\frac{1}{2}\frac{\mathrm{d}}{\mathrm{d}t}\|\nabla_GE(U(t))\|^2
			&=\left\langle \nabla^2E(U(t))\frac{\mathrm{d}U(t)}{\mathrm{d}t}-\frac{\mathrm{d}U(t)}{\mathrm{d}t}U(t)^\top\nabla E(U(t)),\nabla_GE(U(t))\right\rangle
			\\
			&\quad-\left\langle U(t)\frac{\mathrm{d}}{\mathrm{d}t}\left(U(t)^\top\nabla E(U(t))\right),\nabla_GE(U(t))\right\rangle .
		\end{aligned}
	\end{equation}
	
	For the first term on the right-hand side of \eqref{d nabla_GE(U)/dt}, we have
	\begin{equation*}
		\begin{aligned}
			&\left\langle \nabla^2E(U(t))\frac{\mathrm{d}U(t)}{\mathrm{d}t}-\frac{\mathrm{d}U(t)}{\mathrm{d}t}U(t)^\top\nabla E(U(t)),\nabla_GE(U(t))\right\rangle
			\\
			&= \scalebox{0.97}{$ 	-\left\langle \nabla^2E(U(t))\nabla_GE(U(t)),\nabla_GE(U(t))\right\rangle+\left\langle \nabla_GE(U(t))U(t)^\top\nabla E(U(t)),\nabla_GE(U(t))\right\rangle $}
			\\
			&=-\operatorname{Hess}_{\tilde G}E(U(t))\left[\nabla_GE(U(t)),\nabla_GE(U(t))\right]
			\\
			&\leqslant -\mu\|\nabla_GE(U(t))\|^2 .
		\end{aligned}
	\end{equation*}
	We decompose the second term as
	\begin{equation*}
		\begin{aligned}
			&-\left\langle U(t)\frac{\mathrm{d}}{\mathrm{d}t}\left(U(t)^\top\nabla E(U(t))\right),\nabla_GE(U(t))\right\rangle
			\\
			&=-\left\langle U(t)\frac{\mathrm{d}}{\mathrm{d}t}\left(U(t)^\top\nabla E(U(t))\right),\nabla E(U(t))U(t)^\top U(t)-U(t)U(t)^\top\nabla E(U(t))\right\rangle
			\\
			&\quad-\left\langle U(t)\frac{\mathrm{d}}{\mathrm{d}t}\left(U(t)^\top\nabla E(U(t))\right),\nabla E(U(t))\left(I_N-U(t)^\top U(t)\right)\right\rangle .
		\end{aligned}
	\end{equation*}
	Since $U^\top\left(\nabla E(U)U^\top U-UU^\top\nabla E(U)\right)=-\left(\nabla E(U)U^\top U-UU^\top\nabla E(U)\right)^\top U$
	is skew-symmetric and $\frac{\mathrm{d}}{\mathrm{d}t}\left(U(t)^\top\nabla E(U(t))\right)$ is symmetric, we obtain
	\begin{equation*}
		\begin{aligned}
			&-\left\langle U(t)\frac{\mathrm d}{\mathrm dt}\left(U(t)^\top\nabla E(U(t))\right),\nabla_GE(U(t))\right\rangle
			\\
			&=-\left\langle U(t)\frac{\mathrm{d}}{\mathrm{d}t}\left(U(t)^\top\nabla E(U(t))\right),\nabla E(U(t))\left(I_N-U(t)^\top U(t)\right)\right\rangle
			\\
			&=-\operatorname{tr}\left[U(t)^\top\mathcal H U(t)\left(I_N-U(t)^\top U(t)\right)\left(\left(\frac{\mathrm{d}U(t)}{\mathrm{d}t}\right)^\top\mathcal H U(t)+U(t)^\top\mathcal H\frac{\mathrm{d}U(t)}{\mathrm{d}t}\right)\right]
			\\
			&=\operatorname{tr}\left[U(t)^\top\mathcal H U(t)\left(I_N-U(t)^\top U(t)\right)\left(\nabla_GE(U(t))^\top\mathcal H U(t)+U(t)^\top\mathcal H\nabla_GE(U(t))\right)\right].
		\end{aligned}
	\end{equation*}
	By Lemma~\ref{singular values of solution},
	$$
	0\leqslant I_N-U(t)^\top U(t)\leqslant I_N,\qquad t\geqslant0 .
	$$
	Note that $0\leqslant U(t)^\top U(t)\leqslant I_N$, while $U(t)^\top\mathcal H U(t)\leqslant0$, we have
	$$
	\lambda_1I_N\leqslant U(t)^\top\mathcal H U(t)\leqslant0,\qquad \|U(t)^\top\mathcal H U(t)\|\leqslant |\lambda_1|.
	$$
	Moreover,
	\begin{equation*}
		\begin{aligned}
		&	\nabla_GE(U(t))^\top\nabla_GE(U(t))
			\\&=\left(\mathcal H U(t)-U(t)\left(U(t)^\top\mathcal H U(t)\right)\right)^\top\left(\mathcal H U(t)-U(t)\left(U(t)^\top\mathcal H U(t)\right)\right)
			\\ &=(\mathcal H U(t))^\top\mathcal H U(t)-\left(U(t)^\top\mathcal H U(t)\right)^2-U(t)^\top\mathcal H U(t)\left(I_N-U(t)^\top U(t)\right)U(t)^\top\mathcal H U(t).
		\end{aligned}
	\end{equation*}
	Hence, with
	$$
	M_G=\sup_{t\geqslant0}\|\nabla_GE(U(t))\|<\infty,
	$$
	we obtain
	$$
	\|(\mathcal H U(t))^\top\mathcal H U(t)\|\leqslant M_G^2+2|\lambda_1|^2 .
	$$
	Furthermore,
	\begin{equation*}
		\begin{aligned}
			&\operatorname{tr}\left[U(t)^\top\mathcal H U(t)\left(I_N-U(t)^\top U(t)\right)\left(\nabla_GE(U(t))^\top\mathcal H U(t)+U(t)^\top\mathcal H\nabla_GE(U(t))\right)\right]
			\\
			&=2\operatorname{tr}\left[U(t)^\top\mathcal H U(t)\left(I_N-U(t)^\top U(t)\right)\left((\mathcal H U(t))^\top\mathcal H U(t)-\left(U(t)^\top\mathcal H U(t)\right)^2\right)\right]
			\\
			&\leqslant 2N|\lambda_1|\left(M_G^2+3|\lambda_1|^2\right)\left\|I_N-U(t)^\top U(t)\right\|.
		\end{aligned}
	\end{equation*}
	Denote
	$$
	C^*=2N|\lambda_1|\left(M_G^2+3|\lambda_1|^2\right).
	$$
	Combining the preceding estimates with Theorem~\ref{limiting solution in stiefel manifold} gives
	\begin{equation*}
		\begin{aligned}
			\frac{1}{2}\frac{\mathrm{d}}{\mathrm{d}t}\left\|\nabla_GE(U(t))\right\|^2
			&\leqslant -\mu\left\|\nabla_GE(U(t))\right\|^2+C^*\left\|I_N-U(t)^\top U(t)\right\|
			\\
			&\leqslant -\mu\left\|\nabla_GE(U(t))\right\|^2+C^*\left\|I_N-U_0^\top U_0\right\|e^{-2c_0t}.
		\end{aligned}
	\end{equation*}
	Applying Gr\"{o}nwall's inequality yields
	\begin{equation*}
		\begin{aligned}
		&	\left\|\nabla_GE(U(t))\right\|^2
			\\&		\leqslant
	\begin{cases}
				\left\|\nabla_GE(U_0)\right\|^2 e^{-2\mu t}+2C^*\left\|I_N-U_0^\top U_0\right\|te^{-2\mu t}, & \mu=c_0,\\[2mm]
				\left\|\nabla_GE(U_0)\right\|^2 e^{-2\mu t}+\dfrac{C^*\left\|I_N-U_0^\top U_0\right\|}{\mu-c_0}\left(e^{(2\mu-2c_0)t}-1\right)e^{-2\mu t}, & \mu\neq c_0.
			\end{cases}
		\end{aligned}
	\end{equation*}
	The proof is now complete.
\end{proof}

\bibliographystyle{siamplain}
\bibliography{reference}

\end{document}